\documentclass[11pt,a4paper,reqno]{amsart}
\usepackage[utf8]{inputenc}
\usepackage[T1]{fontenc}
\usepackage{textcomp}  
\usepackage{lmodern}  
\usepackage[british]{babel}
\usepackage{csquotes}
\usepackage{stmaryrd}
\usepackage{psfrag}
\usepackage{perpage}
\usepackage{url}
\usepackage{color}
\usepackage{mathrsfs}

\usepackage{mathtools}
\numberwithin{equation}{section}

\usepackage[normalem]{ulem}

\usepackage{microtype}
\mathtoolsset{centercolon}
\usepackage{enumitem}
\usepackage{amssymb}
\usepackage{bm}

\usepackage[dvipsnames]{xcolor}
\usepackage{hyperref}
\hypersetup{
	colorlinks,
	linkcolor={red!50!black},
	citecolor={blue!50!black},
	urlcolor={blue!80!black}
}
\usepackage{amsthm}
\usepackage{geometry}
\usepackage{caption}
\usepackage{cleveref}

\usepackage{pgfplots}
\pgfplotsset{compat=1.14}

\setlist{itemsep=1pt,topsep=1pt,parsep=1pt}

\usepackage{pgf,tikz}
\usetikzlibrary{
  matrix, 
  arrows,
  arrows.meta,
  calc,
  intersections,
  external,
  patterns
}

\DeclareFontFamily{U}{stix2bb}{\skewchar\font127 }
\DeclareFontShape{U}{stix2bb}{m}{n} {<-> stix2-mathbb}{}
\DeclareMathAlphabet{\mathblackboard}{U}{stix2bb}{m}{n}

\newcommand*{\ind}{\bm{1}}

\newcommand*{\Var}{\operatorname{\mathbb{V}ar}}

\newcommand*\dd{\mathop{}\!\mathrm{d}}
\newcommand*{\EE}{\mathbb{E}}
\newcommand*{\bbE}{\mathbb{E}}
\newcommand*{\bE}{\mathbf{E}}
\newcommand*{\bP}{\mathbf{P}}
\newcommand*{\PP}{\mathbb{P}}

\DeclarePairedDelimiterXPP\V[1]{\mathbb{V}}{[}{]}{}{#1}
\newcommand*{\numberset}{\mathbb}
\newcommand*{\N}{\numberset{N}}
\newcommand*{\Z}{\numberset{Z}}
\newcommand*{\Zeven}{\numberset{Z}_{\mathrm{even}}}
\newcommand*{\R}{\numberset{R}}

\newcommand*{\tf}{\mathtt{F}}

\newcommand{\sumtwo}[2]{\sum_{\substack{#1 \\ #2}}} 

\newcommand{\gep}{\varepsilon}

\newcommand{\cV}{\mathcal{V}}
\newcommand{\cM}{\mathcal{M}}

\newcommand{\tPP}{\widetilde{\PP}}
\newcommand{\tEE}{\widetilde{\EE}}
\newcommand{\tVV}{\widetilde{\mathbb{V}}\mathrm{ar}}

\newcommand{\tCC}{\widetilde{\mathbb{C}}\mathrm{ov}}

\newcommand{\e}{\mathrm{e}}

\renewcommand{\V}{\mathbf{V}}

\newcommand*{\llb}{\llbracket}
\newcommand*{\rrb}{\rrbracket}

\renewcommand{\epsilon}{\varepsilon}
\renewcommand{\phi}{\varphi}

\usepackage{scalerel,stackengine}

\theoremstyle{plain}
\newtheorem{theorem}{Theorem}[section]

\newtheorem{lemma}[theorem]{Lemma}
\newtheorem{claim}[theorem]{Claim}
\newtheorem{proposition}[theorem]{Proposition}

\theoremstyle{definition}	
\newtheorem{assumption}{Assumption}

\theoremstyle{remark}
\newtheorem{remark}[theorem]{Remark}
\newtheorem{theoremalpha}{Theorem}

\newcommand{\blue}{\color{blue}}

\title[2D directed polymers with critical spatial correlations]{Free energy and phase transition for\\ 2D directed polymers with critical spatial correlations}

\author[Q. Berger]{Quentin Berger}
\address{Université Sorbonne Paris Nord, Laboratoire d'Analyse, Géométrie et Applications, CNRS UMR 7539, 99 Av. J-B Clément, 93436 Villetaneuse, France and Institut Universitaire de France}
\email{quentin.berger@math.univ-paris13.fr}

\author[F. Cottini]{Francesca Cottini}
\address{Sorbonne Université, Laboratoire de Probabilités, Statistique et Modélisation, CNRS UMR 8001, 4 place Jussieu, 75005 Paris, France}
\email{francesca.cottini@sorbonne-universite.fr}

\begin{document}

\begin{abstract}
We study the two-dimensional directed polymer model in a Gaussian environment which is independent in time and spatially correlated, with covariances $h(x)$ either summable or with a critical decay, satisfying $h(x) \sim (\log |x|)^a/|x|^2$ as $|x|\to\infty$ for some \(a>-1\). 
We determine the precise high-temperature asymptotics of the free energy, confirming a conjecture of Lacoin~\cite{Lac11} later refined in~\cite{CCD25}. 
We also establish a phase transition for the diffusively rescaled partition functions: below some critical point they converge to the Lebesgue measure, while above it they converge to zero. 
A key feature of our approach is that both results are obtained using only second-moment estimates and are based on a simplified change-of-measure argument in the supercritical regime, that may prove useful for other disordered models.
\end{abstract}

\keywords{Directed polymers in random environment, spatially correlated disorder, free energy, intermediate disorder, phase transition}

\subjclass[2020]{Primary 82D60; Secondary 60K37, 82B44.}
\maketitle


\section{Introduction and main results}

The directed polymer model is a widely studied model from statistical mechanics, introduced in the 1980's by Huse and Henley \cite{HH85}.
It has attracted the attention of physicists and mathematicians, both as a model for a polymer placed in a solvent with impurities, and because of its close connection to the stochastic heat equation and the KPZ equation.
We refer to \cite{Com17} for an extensive overview of the model and to \cite{Zyg24} for a review of recent results.

The model is defined as follows. 
Consider a simple random walk \((S_n)_{n\geq 0}\) on \(\Z^d\), \(d\geq 1\), whose directed trajectory \((n,S_n)_{n\geq 0}\) represents the polymer.
We denote by \(\bP_x\) is law when started from \(S_0=x\) and we write more simply \(\bP\coloneqq\bP_0\).
Consider also a field \(\omega \coloneqq (\omega(n,x))_{n\in \N, x\in \Z^d}\) of random variables, which represent the random environment; its law is denoted by \(\PP\).

Then, for a fixed realization of \(\omega\) (quenched disorder), we defined the following Gibbs measures on random walk paths: for \(N\in \N\) and \(\beta \geq 0\) (the inverse temperature), let
\begin{equation*}
  \frac{\dd \bP_{N}^{\beta}}{\dd \bP}(S) \coloneqq \frac{1}{Z_{N}^{\beta}} \exp\Big( \beta\sum_{n=1}^N \omega(n,S_n)  \Big) \,.
\end{equation*} 
In the above, \(Z_{N}^{\beta} = Z_{N}^{\beta}(\omega)\) is the partition function of the model, which normalizes \(\bP_{N}^{\beta}\) to a probability measure and can be written as \(Z_N^{\beta}= \bE[\exp( \beta\sum_{n=1}^N \omega(n,S_n))]\).

\subsection{Free energy and localization phase transition}

The present article considers an environment which possesses (critical) long-range spatial correlations, but let us first present an overview in the i.i.d.\ setting, which is the most commonly studied in the literature.

\subsubsection{With an i.i.d.\ disorder}
\label{sec:intro-iid}

Assume that the \((\omega(n,x))_{n\in \N, x\in \Z^d}\) are i.i.d.\ and that 
\begin{equation}
  \label{eq:integrability-omega}
  \EE[\omega(1,0)] =0, \quad \Var(\omega(1,0)) =1 \,,
  \qquad \lambda(\beta) \coloneqq \log \EE\big[\e^{\beta \omega(1,0)}\big] < \infty \quad \text{for all }\beta \in \R\,. 
\end{equation}
Then, we may define the normalized partition function
\begin{equation*}
  \label{def:W-0}
  W_N^{\beta} \coloneqq \frac{Z_N^{\beta}}{\EE[Z_N^{\beta}]}= \bE\bigg[\exp\Big( \sum_{n=1}^N  (\beta \omega(n,S_n)  -\lambda(\beta)) \Big) \bigg] \,.
\end{equation*}

An important quantity which encodes physical information on the model is the \textit{free energy}, defined as the \(\PP\)-a.s.\ and \(L^q\) limit 
\begin{equation}
  \label{def:free-energy}
  \tf(\beta) \coloneqq \lim_{N\to\infty} \frac{1}{N} \log W_N^{\beta} = \lim_{N\to\infty} \frac{1}{N} \EE \log W_N^{\beta} \,.
\end{equation}
The fact that the limit exists and is \(\PP\)-a.s.\ constant follows from a sub-additivity argument; we refer to \cite[Thm.~2.1]{Com17} and its proof for details.
One can show that \(\beta \mapsto \tf(\beta) +\lambda(\beta)\) is convex and that \(\beta\mapsto \tf(\beta)\) is non-positive and non-increasing. 
In particular, there is a critical point \(\beta_c\), which may be defined as
\[
  \beta_c \coloneqq \sup\{\beta \geq 0, \tf(\beta) =0\} = \inf\{\beta \geq 0, \tf(\beta) <0\} \,. 
\]

We stress that the free energy encodes some localization properties of the directed polymer model, see \cite{CH02,CH06,CSY03}.
Let us state for instance a result from \cite{CH02}, which shows that for a Gaussian environment \((\omega_{1,0}\sim \mathcal{N}(0,1))\), 
\begin{align*}
  \PP\text{-a.s.} \qquad
\tf'(\beta) = -
\beta \lim_{N\to\infty} \EE\bigg[(\bP_N^{\beta,\omega})^{\otimes 2}\bigg(\frac1N\sum_{k=1}^N \ind_{\{S_k^{(1)}=S_k^{(2)}\}}\bigg)\bigg] \,.
\end{align*}
In the above, \((\bP_N^{\beta,\omega})^{\otimes 2}\) denotes the law of two independent copies \((S_n^{(1)})_{n\geq 0}\), \((S_n^{(2)})_{n\geq 0}\) of trajectories under the polymer measure \(\bP_N^{\beta,\omega}\), with a fixed, common environment.
This is obtained by passing to the limit in \cite[Lem.~7.1]{CH02}, and is valid whenever \(\tf\) is differentiable at~\(\beta\), \textit{i.e.}\ at all but at most countably many~\(\beta\), by convexity.

It turns out that, in the i.i.d.\ setting, one can show that \(\beta_c =0\) in dimensions \(d=1\) and \(d=2\) (see \cite{CV06} and \cite{Lac10a}) and \(\beta_c>0\) in \(d\geq 3\) (see \cite{Bol89}).
In view of its physical meaning, it is also natural to investigate the behavior of the free energy close to criticality, that is when~\(\beta\downarrow \beta_c\): this as been done in dimension \(d=1\) (see e.g.\ \cite{AY15,Lac10a,Nak19}), in dimension \(d=2\) (see e.g.\ \cite{Lac10a,Nak14,BL17,BN26}), and in dimension \(d\geq 3\) with more recent breakthroughs (see~\cite{Lac25,JL26}).
Let us briefly summarize the existing results as follows:

\begin{theoremalpha} 
  \label{thm:iid}
  In the case of an i.i.d.\ environment satisfying~\eqref{eq:integrability-omega}, we have the following:
\begin{enumerate}[label=(\roman*)]
  \item 
  In dimension \(d=1\), we have \(\beta_c=0\) and \cite{Nak19} shows that
  \[
  \tf(\beta) \sim - \frac16 \, \beta^4 \quad \text{ as } \beta\downarrow 0 \,. 
  \]

  \item 
  In dimension \(d=2\), we have \(\beta_c=0\) and \cite{BCT25,BN26} show that there is a constant \(c>0\) such that we have (we assume here that \(\EE[\omega(1,0)^3]=0\) to simplify the statement)
  \[
    -\, \frac{1}{c}\, \exp\Big( -\frac{\pi}{\beta^2} \Big) \leq \tf(\beta) \leq   -\, c \,\exp\Big( -\frac{\pi}{\beta^2} \Big) \quad \text{ as } \beta\downarrow 0 \,.
  \]

  \item 
  In dimension \(d\geq 3\), we have \(\beta_c>0\) and \cite{JL26} shows that 
  \[
    -\, \exp\Big(- u^{- \frac{2+d}{8 d} +o(1)} \Big) \leq \tf(\beta_c +u) \leq -\, \exp\Big(- u^{-1+o(1)} \Big) \quad \text{ as } u\downarrow 0 \,,
  \]
  where the upper bound holds for Gaussian disorder.
\end{enumerate}
\end{theoremalpha}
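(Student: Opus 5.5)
This statement collects results already established in the literature (\cite{CV06,Lac10a,Bol89,Nak19,BCT25,BN26,JL26}), so my plan is to indicate how each item follows from the corresponding references and which mechanism drives each bound, rather than to redo the proofs in full. All three parts rest on two tools. For upper bounds on \(\tf\) (showing it is negative and quantifying how negative), one uses a fractional moment estimate \(\EE[(W_N^\beta)^\theta]\) with \(\theta\in(0,1)\), combined with a coarse-graining of space-time into blocks and a change of measure on the environment inside blocks that the polymer is likely to visit. For lower bounds on \(\tf\), one uses second-moment control of \(W_N^\beta\) up to a correlation length \(N_\beta\), followed by a concatenation (super-additivity) argument giving \(\tf(\beta)\gtrsim -1/N_\beta\) up to logarithmic corrections.

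\emph{Dimension \(d=1\).} Here \(\beta_c=0\) follows from \cite{CV06,Lac10a}. For the sharp constant, I would invoke \cite{Nak19}, which uses intermediate-disorder scaling: under \(\beta\to\beta N^{-1/4}\), the partition function converges to the solution of the continuum stochastic heat equation \cite{AY15}. The free energy then satisfies the scaling relation \(\tf(\beta)\approx \beta^4\,\tf_{\mathrm{cont}}(1)\), and the continuum free energy equals \(-1/24\) for the continuum directed polymer. After accounting for the normalization of the simple random walk, namely the periodicity and variance factors, this gives \(-\tfrac16\beta^4\). The delicate step is the interchange of the limits \(N\to\infty\) and \(\beta\to0\), which requires uniform concentration estimates.

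\emph{Dimension \(d=2\).} Here the correlation length is \(N_\beta=\exp(\pi/\beta^2)\), since the replica overlap satisfies \(\bE^{\otimes2}[\sum_{n\le N}\ind_{\{S_n^{(1)}=S_n^{(2)}\}}]\sim \frac1\pi\log N\). The second moment of \(W_N^\beta\) therefore stays bounded as long as \(\beta^2\frac1\pi\log N<1\).
\begin{itemize}
\item For the lower bound \(\tf(\beta)\ge -\frac1c e^{-\pi/\beta^2}\) \cite{BN26}, one shows that \(\log W_{N}^\beta\) has bounded negative moments at \(N=\epsilon N_\beta\), via a concentration inequality, and then concatenates.
\item For the upper bound \cite{BCT25,Lac10a,BL17}, one coarse-grains at scale \(N_\beta\) and tilts the environment downward on a region of size \(N_\beta\times\sqrt{N_\beta}\). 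The cost of this tilt is \(O(1)\) because of the logarithmic variance structure, while it reduces the fractional moment of each block by a constant factor.
\end{itemize}
I expect this matching of constants in the exponent, with \(\pi\) appearing exactly in both bounds, to be the main obstacle. The assumption \(\EE[\omega^3]=0\) removes a third-cumulant correction to the exponent.

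\emph{Dimension \(d\ge3\).} Here \(\beta_c>0\) follows from \cite{Bol89}, via the \(L^2\)-boundedness of \(W_N^\beta\) for small \(\beta\). The near-critical bounds are those of \cite{JL26}.
\begin{itemize}
\item The lower bound \(-\exp(-u^{-(2+d)/(8d)+o(1)})\) comes from quantifying how fast \(W_N^{\beta_c+u}\) decays, combined with a uniform-integrability argument at criticality.
\item The upper bound uses the Gaussian structure: by Gaussian integration by parts, a change of measure costs \(O(u^{-1})\) per block.
\end{itemize}
Since these results are simply cited, the statement follows by combining the references item by item.
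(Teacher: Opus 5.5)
Your proposal matches the paper: Theorem~\ref{thm:iid} is a compilation of known results that the paper states without proof. The paper relies on \cite{CV06,Lac10a,Nak19} for $d=1$, on \cite{BCT25,BN26} for $d=2$ and on \cite{Bol89,JL26} for $d\geq 3$, which is exactly what you do. The mechanism sketches you add are not load-bearing, but two of them are wrong in $d=2$. First, a plain downward tilt of the environment at $O(1)$ entropy cost lowers the fractional moment only by a factor $1-O(\beta^2)$, whatever the block size; refined chaos-based changes of measure, as in Section~\ref{sec:key-proof}, are needed instead. Second, at $N=\epsilon N_\beta$ one is in the critical window, where the point-to-plane $W_N^{\beta}\to 0$ in distribution, so $\EE[\log W_N^{\beta}]$ is not bounded below there; the second-moment/concentration route of Section~\ref{sec:lower-bound} only gives the exponent up to a factor $1+o(1)$, not the constant-factor bound.
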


\subsubsection{With spatially correlated disorder}

In the present article, we consider the case where the environment is not i.i.d.\ anymore, but presents long-range correlations in space, while remaining independent in time.
More precisely, we make the following assumption, which is the setting considered in \cite{CCD25}.

\begin{assumption}
  \label{main-assumption}
  The environment \(\omega =(\omega(n,x))_{n\in \N, x\in \Z^d}\) is a centered Gaussian field, with covariances given by
  \[
  \EE[\omega(n,x) \omega(m,y)] = h(x-y) \ind_{\{n=m\}} \,,
  \]
  with \(h:\Z^2\to\R_+\) a bounded function verifying \(h(0)=1\).
  We additionally assume that \(h=h_0\ast h_0 \) for some \(h_0 :\Z^d\to \R_+\), so that \(\omega(n,x)\) can be written as
  \begin{equation}
    \label{omega-hat-omega}
    \omega(n,x) = \sum_{y \in \Z^2} h_0(x-y) \,\hat{\omega}(n,y)\,, \qquad (n,x) \in \N \times \Z^2\,,
  \end{equation}
  where $\hat{\omega}=(\hat{\omega}(n,y))_{(n,x) \in \N \times \Z^2}$ are i.i.d.\ standard Gaussian random variables.
\end{assumption}

\noindent
We stress that~\eqref{eq:integrability-omega} still holds, with \(\lambda(\beta) = \frac12 \beta^2\).
In fact, all definitions of Section~\ref{sec:intro-iid} remain valid.
For instance, the free energy \(\tf(\beta)\) from~\eqref{def:free-energy} is still well-defined.

Directed polymers in Gaussian environment with spatial correlations have been considered by Lacoin in~\cite{Lac11}, in a space-time continuous setting, with power-law correlations \(h(x) \asymp \|x\|^{-\theta}\) for some \(\theta>0\). 
(Here \(h(x)\asymp g(x)\) means that there is a constant \(c>0\) such that \(\frac{1}{c} g(x) \leq h(x) \leq c g(x)\) for all \(x\).)
In a nutshell, \cite{Lac11} shows that long-range spatial correlations modify the behavior of the model when \(\theta<2\wedge d\), whereas it should behave similarly to the i.i.d.\ case when \(\theta > 2\wedge d\).

Let us state for instance~\cite[Thm.~1.3]{Lac11}, which shows how Theorem~\ref{thm:iid} is modified by the presence of spatial correlations with power-law decay \(\theta < 2\wedge d\).

\begin{theoremalpha}
  \label{thm:Lacoin}
  Suppose that Assumption~\ref{main-assumption} holds with \(h(x) \asymp \|x\|^{-\theta}\), for some \(\theta <2\wedge d\).
  Then, in any dimension \(d\geq 1\), we have that \(\beta_c=0\) and \(\tf(\beta) \asymp - \beta^{4/(2-d)}\) as \(\beta\downarrow 0\).
\end{theoremalpha}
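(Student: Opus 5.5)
The statement has two parts: $\beta_c=0$, and the two-sided bound $\tf(\beta)\asymp -\beta^{4/(2-\theta)}$ (I read the exponent as $4/(2-\theta)$, since $\theta$ is the decay exponent). Everything is governed by one correlation length,
\[
N_\beta \coloneqq \beta^{-4/(2-\theta)} .
\]
This is where $\beta^2\,\bE^{\otimes 2}\big[\sum_{n\le N} h(S_n-S'_n)\big]$ becomes of order one. Indeed, by the local CLT, $\bE^{\otimes2}[h(S_n-S_n')]\asymp n^{-\theta/2}$ when $\theta<2\wedge d$, so the expected replica overlap up to time $N$ is $\asymp N^{1-\theta/2}$. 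The plan is to prove $\tf(\beta)\ge -C/N_\beta$ using second moments at scale $\epsilon N_\beta$, and $\tf(\beta)\le -c/N_\beta$ using a fractional-moment and change-of-measure argument at scale $AN_\beta$. The upper bound gives $\tf(\beta)<0$ for every $\beta>0$, which is exactly $\beta_c=0$.

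\emph{Lower bound.} I will use superadditivity of $N\mapsto \EE\log W_N^\beta$ (Markov property plus Jensen on the endpoint distribution), which gives $\tf(\beta)=\sup_N \frac1N\EE\log W_N^\beta$. It then suffices to show $\EE\log W_N^\beta\ge -C$ for $N=\epsilon N_\beta$.

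First, I control the second moment:
\[
\EE[(W_N^\beta)^2]=\bE^{\otimes2}\big[\e^{\beta^2\sum_{n\le N}h(S_n-S_n')}\big].
\]
A Khas'minskii-type bound applies: if $\sup_x \bE_x^{\otimes2}\big[\beta^2\sum_{n\le N}h(S_n-S'_n)\big]\le C\beta^2N^{1-\theta/2}\le \frac12$, then the exponential moment is at most $2$. Paley--Zygmund then gives $\PP(W_N^\beta\ge \frac12)\ge c$.

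Second, I use Gaussian concentration for $\log W_N^\beta$ as a function of the i.i.d.\ field $\hat\omega$. Its gradient squared equals $\beta^2$ times the quenched replica overlap $\sum_n \bE_N^{\beta,\otimes2}[h(S_n-S_n')]$. This is not uniformly bounded, so I will use the standard trick (as in \cite{Com17}): restrict to the event where the overlap is $O(1)$, which has probability bounded below by the same second-moment estimate, and apply concentration for the resulting Lipschitz function. Together these give $\EE\log W_N^\beta\ge -C$, hence $\tf(\beta)\ge -C\epsilon^{-1}\beta^{4/(2-\theta)}$.

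\emph{Upper bound.} Fix $\gamma\in(0,1)$, $N=AN_\beta$ with $A$ large, and cut space-time into blocks $B_{i,y}=(iN,(i+1)N]\times(y\sqrt N+[0,\sqrt N)^d)$. The standard coarse-graining decomposes $W_{mN}^\beta$ according to the sequence of spatial blocks visited at times $iN$, and uses $(\sum a_j)^\gamma\le\sum a_j^\gamma$. The result will follow if each visited block contributes a factor $\le \eta$ small to $\EE[(\cdot)^\gamma]$; then $\EE[(W^\beta_{mN})^\gamma]$ decays exponentially in $m$ and $\tf(\beta)\le -c/N$.

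To gain this factor I change measure on $\hat\omega$ rather than on $\omega$. This is the key choice: $\hat\omega$ is i.i.d., so tilts in disjoint blocks are independent, and the long-range spatial correlations cost nothing extra. On a block $B$, I shift $\hat\omega(n,z)\mapsto\hat\omega(n,z)-\delta\sum_{x\in B_n}h_0(x-z)$, which shifts $\omega(n,\cdot)$ by $-\delta\,(h\ast\ind_{B_n})$. The relative-entropy cost is
\[
\tfrac{\delta^2}{2}\sum_{n}\sum_{x,x'\in B_n}h(x-x')\asymp \delta^2 N^{1+d-\theta/2}.
\]
Choose $\delta=N^{-(1+d-\theta/2)/2}$ so that this cost is $O(1)$. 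A path spending time of order $N$ within $O(\sqrt N)$ of the block loses energy
\[
\beta\delta\, N\cdot N^{(d-\theta)/2}\asymp (\beta^2N^{1-\theta/2})^{1/2}=A^{(2-\theta)/4},
\]
which is large. Hölder's inequality then bounds $\EE[W_B^\gamma]$ by $\tEE[W_B]^\gamma$ times an $O(1)$ factor, and $\tEE[W_B]$ is small except on an event of small probability for the walk.

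\emph{Main obstacle.} I expect the hardest step to be making the upper-bound block estimate uniform over entry points and paths. Concretely, I need to show that the tilt penalizes with high $\bP$-probability the paths entering the block, while the paths that exit laterally are handled by the Gaussian tail of $S$ through the coarse-graining sum. I also need the cross-effects from tilts of neighbouring blocks, which carry positive correlations through $h\ge0$, to only help or be summable. Since $h\ge 0$, every tilt lowers every path's energy, so these cross-effects have the good sign; I would exploit this monotonicity first.
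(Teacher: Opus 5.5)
The paper does not prove this statement: it is quoted from \cite{Lac11}. The exponent $4/(2-d)$ in the display is a typo for $4/(2-\theta)$, as you read it. Your plan is sound and is the classical one for this regime.

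\textbf{Lower bound.} This is exactly the scheme of Lemma~\ref{lem:LWexplog}: superadditivity, Paley--Zygmund, and the convex concentration inequality of Proposition~\ref{prop:CSZ20}, fed with a second-moment bound on $|\nabla\log W_N^\beta|^2\ind_{\{W_N^\beta\geq 1/2\}}$. The only difference is the input: at $N=\epsilon N_\beta$ your second-moment bound is elementary via Khas'minskii, whereas at criticality the paper relies on the sharp estimates behind Theorem~\ref{thm:second-moment}. Do state the convexity of $\hat\omega\mapsto\log W_N^\beta$ explicitly. Restricting to a good set $G$ does not by itself produce a Lipschitz function. What is $1$-Lipschitz is $\hat\omega\mapsto\mathrm{dist}(\hat\omega,G)$, and convexity gives $\log W_N^\beta\geq b-c\,\mathrm{dist}(\hat\omega,G)$.

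\textbf{Upper bound.} Here your route genuinely differs from the one the paper uses for its critical analogue. The paper size-biases and takes as test statistic a truncated, coarse-grained chaos expansion, controlled by second moments and hypercontractivity. You use a plain Cameron--Martin shift $\omega\mapsto\omega-\delta\,h\ast\ind_{B}$. Yours is much lighter, and it suffices precisely because $\theta<2$: at cost $O(1)$ the energy gain is $\asymp(\beta^2N^{1-\theta/2})^{1/2}=A^{(2-\theta)/4}\to\infty$.

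The same computation shows why the paper cannot do this. At $\theta=2$, $d=2$, under~\eqref{def:h}, an $O(1)$ cost forces $\delta\asymp N^{-1}(\log N)^{-(a+1)/2}$. The gain is then $\asymp\beta(\log N)^{(a+1)/2}\asymp\hat\beta(\log N)^{-1/2}\to0$, and the same $(\log N)^{-1/2}$ appears under~\eqref{finite-sum}. A one-block shift only sees the top spatial scale. It misses the factor $\log N$ by which the replica overlap accumulates over scales. That is why the paper needs a multilinear statistic, whose size is governed by the truncated second moment and hence by $\hat\beta_c$.

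\textbf{Two minor corrections.} First, the independence of the tilts along a coarse-grained trajectory comes from time-independence, with one tilted block per slice, not from the $\hat\omega$ representation. The $\hat\omega$-shifts of two spatially disjoint blocks in the same slice overlap. Their Cameron--Martin cross term $\delta^2\sum_n\sum_{x\in B_n,\,x'\in B'_n}h(x-x')$ is positive, which is the bad sign for the cost. So tilt only the block where the coarse-grained path enters each slice. The obstacle you anticipate then disappears, and uniformity is just a matter of choosing the lateral cutoff, then the confinement radius, then $A$. Second, your upper bound is proved for small $\beta$; $\beta_c=0$ for all $\beta$ then follows from the monotonicity of $\tf$.
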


Lacoin states in~\cite[Rem.~1.5]{Lac11} that in the case of \textit{critical} spatial correlations (\textit{i.e.}\ \(\theta=2\)), the free energy should behave as \(-\exp( - \frac{cst.}{\beta})\) in dimension \(d=2\) and as \(-\exp(-\frac{cst.}{\beta^2})\) in dimension \(d\geq 3\) as \(\beta\downarrow 0\).
These cases have been left open since then.
In a simultaneous and independent work \cite{CRW26}, Cao, Rang and Wu study the same Brownian directed polymer as Lacoin, with product-type and radial spatial correlations. 
In particular, in the case of critical spatial correlations $\theta = 2$, their results confirm the aforementioned predictions of Lacoin in dimension \(d=2\), up to multiplicative constants in the exponential rate.
In dimension \(d\geq 3\), Lacoin's prediction in that case is in fact wrong; we discuss this further in Section~\ref{sec:d-3} below.

In the present article, we focus on the case of the critical dimension \(d=2\), with \textit{critical} spatial correlations (\textit{i.e.}\ \(\theta=2\)), and we fully answer Lacoin's question in that case, additionally identifying the correct constant in the exponential rate.
In fact, we treat a more general setting than having pure power correlations.
We consider the following two cases:
\begin{itemize}
  \item Correlations are summable (this includes the i.i.d.\ case or the correlated case with \(h(x)\asymp |x|^{-\theta}\) with \(\theta>2\)),
  \begin{equation}
    \label{finite-sum}
    \Sigma_h \coloneqq \sum_{x\in \Zeven^2} h(x) <+\infty\,,
    \tag{Sum.}
  \end{equation}
  where we have set \(\Zeven^2 \coloneqq \{ z=(z_1,z_2) \in \Z^2 \,,\, z_1+z_2 \in 2 \Z\}\). 
  \item Correlations are not summable and they have the following asymptotic behavior:
  \begin{equation}
  \label{def:h}
  \text{ for some } a>-1, \qquad
  h(x) \sim \frac{(\log |x|)^a}{|x|^2} \quad \text{ as } |x|\to\infty \,.
  \tag{$\mathrm{Log}^a$}
  \end{equation}
\end{itemize}
Let us stress that the case of summable correlations is very similar to the i.i.d.\ case, and that the core of the article concerns the non-summable case~\eqref{def:h}. 
We have kept the summable case for completeness since our techniques apply there as well (in fact many technical statements greatly simplify); it also serves as a point of comparison with the non-summable case.

Assuming either~\eqref{finite-sum} or~\eqref{def:h}, we identify the correct exponent of \(\beta\) in the exponential decay of the free energy, together with the correct constant.
This fully answers the question of \cite{Lac11} and the first conjecture in~\cite[\S7]{CCD25}.

\begin{theorem}
  \label{thm:free-energy}
  Suppose that Assumption~\ref{main-assumption} holds. 
  Then, as \(\beta\downarrow 0\), we have that
  \[
   \tf(\beta) = 
   \begin{cases}
    - \exp\Big( - (1+o(1))\, \mathscr{C}_h \, \beta^{-2}  \Big)  &\quad\text{ if \eqref{finite-sum} holds,}\\
    - \exp\Big( - (1+o(1))\, \mathscr{C}_a\, \beta^{-\frac{2}{2+a}}  \Big)  & \quad\text{ if \eqref{def:h} holds.}
   \end{cases} 
  \]
  The constants \(\mathscr{C}_h\) and \(\mathscr{C}_a\) are explicit, given by $\mathscr{C}_h = \frac{\pi}{\Sigma_h}$ and \(\mathscr{C}_a = ((a+2) 2^{(a-1)/2} \,z_a)^{2/(a+2)}\), where \(z_a\) is the first zero of the following Bessel function of the first kind
  \begin{equation}
    \label{eq:Bessel-function}
    J_{\alpha} (z) \coloneqq \sum_{n=0}^{\infty} \frac{(-1)^n}{\Gamma(n+\alpha+1) n!}\Big(\frac{z}{2}\Big)^{2n+\alpha} \qquad \text{ with } \alpha \coloneqq \frac{a+1}{a+2} -1 >-1 \,.
  \end{equation}
\end{theorem}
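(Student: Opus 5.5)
The proof splits into a lower bound and an upper bound on $\tf(\beta)$. Both are driven by one quantity, the \emph{correlation length} $N_\beta$. This is the time horizon at which the second moment of the point-to-plane partition function $W_N^\beta$ stops being bounded.

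\textbf{Step 1: the second moment.} Under Assumption~\ref{main-assumption} we have
\[
\EE[(W_N^\beta)^2]=\bE^{\otimes 2}\Big[\exp\Big(\beta^2\sum_{n=1}^N h(S_n^{(1)}-S_n^{(2)})\Big)\Big],
\]
and $S^{(1)}-S^{(2)}$ is a two-dimensional random walk on $\Zeven^2$. We expand the exponential as a renewal-type series. Its $k$-th term carries $\beta^{2k}$ times a $k$-fold iterated kernel $q(n,x)h(x)$, where $q(n,x)\approx \frac{1}{\pi n}e^{-|x|^2/n}$ is the local CLT density.

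In the summable case~\eqref{finite-sum}, the one-step weight up to time $N$ is $\beta^2\Sigma_h\frac{\log N}{\pi}$. The series therefore stays bounded precisely when $\log N< (1-\varepsilon)\pi/(\Sigma_h\beta^2)$, which gives $\log N_\beta\sim \mathscr{C}_h\beta^{-2}$.

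In case~\eqref{def:h}, the contribution from scales $|x|^2\approx n\approx e^{t}$ has weight $\beta^2 t^a\,\dd t$ up to constants. The quadratic form on the logarithmic scale $t\in[0,T]$, $T=\log N$, then becomes a Volterra-type operator with kernel of the form $\beta^2 (t\wedge s)^{a}$-like weights. After a scaling $t=T u$, its norm is of order $\beta^2T^{2+a}$. The threshold $\beta^2T^{a+2}=\lambda_a$ is given by the top eigenvalue of a Sturm--Liouville problem $-(\,\cdot\,)''=\lambda u^{a}(\cdot)$ with Dirichlet/Neumann conditions. Such problems are solved by Bessel functions of order $\alpha=\frac{a+1}{a+2}-1$, and the first eigenvalue is encoded by the first zero $z_a$. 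Solving $\log N_\beta=(\lambda_a/\beta^2)^{1/(a+2)}$ should reproduce $\mathscr{C}_a\beta^{-2/(2+a)}$ once the constants $2^{(a-1)/2}$ and $(a+2)$ from the change of variables are tracked. Matching this constant exactly is the part I expect to be the main computational obstacle. The kernel must be approximated precisely enough that $o(1)$ corrections in the exponent are harmless, with a discretization of $\log$-scales and monotone comparisons used to squeeze the operator norm.

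\textbf{Step 2: lower bound on $\tf$, i.e.\ $\tf(\beta)\ge -N_\beta^{-1+o(1)}$.} We fix $N=N_\beta^{1-\varepsilon}$, for which $\sup\EE[(W_N^\beta)^2]<\infty$. By Paley--Zygmund, $\PP(W_N^\beta\ge 1/2)\ge c>0$, and Gaussian concentration of $\log W_N^\beta$ (Lipschitz in $\hat\omega$ with constant controlled by the overlap, which is $O(1)$ here) upgrades this to $\EE\log W_N^\beta\ge -C$. A standard block argument then gives $\tf(\beta)\ge -C'/N$: we concatenate blocks of length $N$ and use superadditivity of $\EE\log$ for the point-to-plane partition function, with a Markov-property restart. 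This yields $\tf\ge -\exp(-(1-\varepsilon)\log N_\beta)$.

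\textbf{Step 3: upper bound via fractional moment and change of measure.} We take $N=N_\beta^{1+\varepsilon}$ and show $\EE[(W_N^\beta)^{\theta}]\le \delta$ for some $\theta<1$ and a small $\delta$. The standard coarse-graining of \cite{Lac10a,BL17} then gives $\tf(\beta)\le -c/N$. For the fractional moment we use Hölder with a change of measure penalizing $\hat\omega$ along a quadratic form
\[
X=\sum_{(n,x)\ne(m,y)} V(n,m,x,y)\,\omega(n,x)\omega(m,y),
\]
with $V$ chosen from the top eigenfunction of Step~1 (the regime is supercritical, so that eigenvalue now exceeds the threshold). Under $\bP$ the mean of $X$ along polymer paths is large compared to its standard deviation under $\PP$. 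We tilt $\PP$ by $\exp(-\eta X)$, or more simply restrict to $\{X\le \text{threshold}\}$. We then show $\PP(X\le t)$ is close to one while $\tEE[W_N]$ is small, using only second moments of $X$ (Chebyshev). Non-summable correlations make the variance computation of $X$ delicate, and this second-moment control is where the eigenfunction/Bessel structure must be reused. Combining Steps 2 and 3 and letting $\varepsilon\to0$ gives Theorem~\ref{thm:free-energy}.
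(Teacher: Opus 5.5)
Your lower bound (Step~2) is the paper's argument: superadditivity of $\EE\log W_N^\beta$, Paley--Zygmund, and Gaussian concentration at $\hat\beta=(1-\varepsilon)\hat\beta_c$. One caveat: $\log W_N^\beta$ is not $O(1)$-Lipschitz in $\hat\omega$, because $|\nabla \log W_N^\beta|^2$ is $\beta^2$ times the \emph{quenched} replica overlap. You therefore need a concentration inequality for convex functions, such as Proposition~\ref{prop:CSZ20}, which only requires $|\nabla\log W_N^\beta|\le c$ on an event of probability bounded below. On $\{W_N^\beta\ge \frac12\}$ the gradient is controlled by $\EE[(W_N^{(1+\delta)\beta})^2]$, which is still subcritical.

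The genuine gap is in Step~3. You claim that the mean of the quadratic form $X$ along polymer paths is large compared to its standard deviation; this is false at $\log N=(1+\varepsilon)\log N_\beta$. Take any centred $X$ in the Wiener chaoses of degree at most two, and let $\Pi_{\le 2}$ be the projection onto those chaoses. Then $\tEE_f[X]=\EE[X(W_N^\beta(f)-1)]\le\sqrt{\Var(X)}\,\|\Pi_{\le 2}(W_N^\beta(f)-1)\|_{L^2(\PP)}$. Moreover $\|\Pi_{\le 2}(W_N^\beta(f)-1)\|_{L^2}^2=\beta^2\bE_f^{\otimes 2}[\mathcal L_N]+\frac12\beta^4\bE_f^{\otimes 2}[\mathcal L_N^2]$, which stays bounded uniformly in $N$ and $f$ as long as $\hat\beta$ is a fixed constant (compare with a subcritical $\tilde\beta_N$ as in the proof of Claim~\ref{claim:sup-V}).

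So, averaged over the path, $X$ is shifted by only $O(1)$ standard deviations. No Chebyshev argument can then make $\PP(X>t)$ and $\tPP_f(X\le t)$ both small, as the coarse-graining requires; the exponential tilt $e^{-\eta X}$ has the same problem, since its gain is governed by the same quantities. A degree-two statistic only becomes effective when $\beta^2\log N\to\infty$ (resp.\ $\beta^2(\log N)^{a+2}\to\infty$), and that destroys the constant. This is why quadratic changes of measure, as in \cite{Lac10a}, give non-sharp exponents in $d=2$. Structurally, in $\EE[(W_N^\beta)^2]=\sum_{k}\frac{\beta^{2k}}{k!}\bE^{\otimes 2}[\mathcal L_N^k]$ every term is bounded, and the series diverges above $\hat\beta_c$ only through geometric accumulation as $k\to\infty$. ``The top eigenvalue exceeding the threshold'' is invisible to a statistic of bounded degree.

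The missing idea is a statistic whose chaos degree tends to infinity. The paper uses $X_N(f)$, the chaos expansion of $W_N^{\beta_N}(f)$ restricted to sets $A$ with:
\begin{itemize}
\item $|A|\le 1+k_N$, where $k_N=(\log\log N)^2$;
\item starting times in $[\rho N,\frac N2]$;
\item width at most $M_N=N^\gamma$, chosen so that $\beta_N$ is still supercritical at scale $M_N$.
\end{itemize}
Then $\tEE_f[X_N(f)]=\Var(X_N(f))\gtrsim\log(\frac2\rho)\,\beta_N^2\,\mathcal V^{\beta_N}_{M_N,\le k_N}(0)$. The whole point is that $\mathcal V^{\beta_N}_{M_N,\le k_N}(0)$ grows faster than any power of $\log N$ (Claim~\ref{claim:lower-bound}), which beats $\rho^{-1}\beta_N^{-2}$.

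This is where the Bessel structure genuinely enters. One needs $\mathcal V^{\beta_N}_{N,\le k_N}(0)\ge c\,((1-\varepsilon)(\hat\beta/\hat\beta_c)^2)^{k_N}$ along $k_N\to\infty$. That requires the exponential growth rate of the coefficients $\alpha_k$ of $\tilde J_\alpha(\frac{2}{a+2}z)^{-1}$, together with a super-multiplicativity argument to carry the Riemann-sum approximation to growing $k$. The location of the threshold alone (Theorem~\ref{thm:second-moment}) is not enough.

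The price of high degree is that the size-biased variance involves third moments. The paper reduces these to second moments by Gaussian hypercontractivity, at a cost $C^{k_N}=N^{o(1)}$ (hence $k_N\ll\log N$). The off-diagonal part is handled by decorrelation of the random-walk kernels.

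Note finally that Proposition~\ref{prop:coarse-graining} needs $\sup_{f\in\mathcal M_1(\sqrt L)}\EE[W_L^\beta(f)^{1/2}]$ to be small. This does not follow from a single-starting-point bound, since by concavity $\EE[W_L^\beta(f)^{1/2}]\ge\EE[(W_L^\beta)^{1/2}]$. The change of measure must therefore be built for spread-out initial conditions, as in Theorem~\ref{thm:key-theorem}.
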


\begin{remark}
  Note that the pure power case \(h(x)\sim c |x|^{-2}\) corresponds to \(a=0\) (or \(\alpha=-1/2\)), in which case \(J_{-1/2} (z) = \sqrt{\frac{2}{\pi}} \cos(z)\) and thus \(z_{0}=\pi/2\).
\end{remark}

Let us stress here that, aside from the result in itself, our proof's techniques have their own interest.
Indeed, our method only relies on second moment estimates and somehow reduces the proof of Theorem~\ref{thm:free-energy} to showing that the second moment exhibits a phase transition (see Theorem~\ref{thm:second-moment} below); the constants \(\mathscr{C}_h\), \(\mathscr{C}_a\) are closely connected to the critical point of this phase transition.
In other words, Theorem~\ref{thm:free-energy} may be seen as a proof of concept that \emph{second moments are enough to obtain sharp estimates on the free energy in the critical dimension}, and we expect our method to be broadly applicable to other disordered systems, such as long-range directed polymers or disordered pinning models.
We refer to Section~\ref{sec:comments-sec-moment} for further discussion.

\begin{remark}
  We focus here on the Gaussian setting for simplicity, but let us mention \cite{CG23,CR26,Rang20}, which consider the scaling limit of one-dimensional polymers with (subcritical) correlated environments defined as an auto-regressive moving average of the type~\eqref{omega-hat-omega}.
\end{remark}

\subsection{Intermediate disorder in the critical dimension \texorpdfstring{\(d=2\)}{}}

In cases where \(\beta_c=0\), one may consider the so-called \textit{intermediate disorder regime}, whose study has been initiated in the case of an i.i.d.\ environment in dimension \(d=1\) in \cite{AKQ14a}.
The idea is to consider an inverse temperature \(\beta_N \downarrow 0\) at some appropriate rate in order to obtain a non-trivial limit for the normalized partition functions \((W_N^{\beta_N}(x))_{N\geq 0}\).
The limit can then be interpreted as a notion of solution for the Stochastic Heat Equation: we refer to \cite{CSZ25} for an overview of recent results, in particular in dimension \(d=2\), which is the critical dimension of the model.

\subsubsection{Intermediate disorder scaling and phase transition for the second moment}

In dimension \(d=2\), let us introduce the correct intermediate disorder regime, under Assumption~\ref{main-assumption} with either~\eqref{finite-sum} (which includes the i.i.d.\ case) or~\eqref{def:h}.

\begin{assumption}[Intermediate disorder scaling]
  \label{hyp:scaling}
  Let \(\hat \beta>0\).
  \begin{enumerate}[label=(\roman*)]
    \item Under~\eqref{finite-sum}, we let \((\beta_N)_{N\ge1}\) be defined by
    \begin{equation}
      \label{def:beta-N-summable}
      \beta_N \coloneqq \hat \beta \, \frac{\mathfrak{C}_h}{\sqrt{\log N}}
      \qquad \text{ with } \quad \mathfrak{C}_h \coloneqq (\pi /\Sigma_h)^{1/2} \,.
    \end{equation}

    \item Under~\eqref{def:h}, we let \((\beta_N)_{N\ge1}\) be defined by
    \begin{equation}
      \label{def:beta-N}
      \beta_N \coloneqq  \hat \beta \frac{\mathfrak{C}_a}{(\log N)^{(a+2)/2}} \qquad \text{ with }\quad \mathfrak{C}_a \coloneqq (a+2) 2^{(a-1)/2} \,.
    \end{equation}
  \end{enumerate}
\end{assumption}

Then, with the scaling given by Assumption~\ref{hyp:scaling}, one observes a phase transition in \(\hat \beta\) already at the level of second moments.
We collect here results from \cite{CSZ17b} in the i.i.d.\ setting and from~\cite{CCD25} under \eqref{def:h}.
The general case of summable correlations~\eqref{finite-sum} does not differ much from the i.i.d.\ case but is not treated in the literature: we include a proof in Appendix~\ref{app:sec-mom-summable}.

\begin{theoremalpha}
  \label{thm:second-moment}
  Suppose that Assumption~\ref{main-assumption} holds and that \((\beta_N)_{N\geq 0}\) is given as in Assumption~\ref{hyp:scaling}.
  
  \begin{enumerate}[label=(\roman*)]
    \item Under~\eqref{finite-sum}, we have that
    \begin{equation}
      \label{eq:limit-second-moment}
      \lim_{N\to\infty} \EE\Big[ (W_N^{\beta_N})^2 \Big] =
      \begin{cases}
        (1-\hat\beta^2)^{-1} =: \exp(\varsigma^2(\hat \beta)) & \quad \text{ if } \hat \beta<1 \,,\\
        +\infty & \quad \text{ if } \hat \beta \geq 1 \,.\\
      \end{cases}
    \end{equation}

    \item Under~\eqref{def:h}, letting \(\tilde{J}_{\alpha}(z) \coloneqq \Gamma(\alpha+1) (z/2)^{-\alpha} J_{\alpha}(z)\) be a modified Bessel function (recall~\eqref{eq:Bessel-function}) and \(z_a\) is its first zero, we have that
    \begin{equation}
      \label{eq:limit-second-moment-2}
      \lim_{N\to\infty} \EE\Big[ (W_N^{\beta_N})^2 \Big] =
      \begin{cases}
        \tilde{J}_{\alpha}(\hat\beta)^{-1} =: \exp(\varsigma^2(\hat \beta)) & \quad \text{ if } \hat \beta<z_a \,,\\
        +\infty & \quad \text{ if } \hat \beta \geq z_a \,.\\
      \end{cases}
    \end{equation}
  \end{enumerate}
In particular, there is a phase transition for the second moment at the critical point
\begin{equation}
  \label{def:critical-beta}
  \hat \beta_c= 1 \quad \text{under \eqref{finite-sum}} 
  \qquad \text{ and } \qquad  
  \hat \beta_c=z_a \quad \text{under \eqref{def:h}} .
\end{equation}
\end{theoremalpha}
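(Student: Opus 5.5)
Part (ii) is taken from \cite{CCD25} and the i.i.d.\ instance of part (i) from \cite{CSZ17b}, so what needs proving is part (i) for general summable correlations~\eqref{finite-sum}. I would follow the classical i.i.d.\ computation. By Gaussianity and time-independence,
\[
\EE\big[(W_N^{\beta})^2\big] = \bE^{\otimes 2}\Big[\exp\Big(\beta^2\sum_{n=1}^N h\big(S_n^{(1)}-S_n^{(2)}\big)\Big)\Big] = \bE\Big[\exp\Big(\beta^2\sum_{n=1}^N h(\tilde S_n)\Big)\Big],
\]
where \(\tilde S = S^{(1)}-S^{(2)}\) is a random walk on \(\Zeven^2\), namely a lazy-type walk with covariance \(2\cdot\frac12 I\) per step. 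Write \(\e^{\beta^2 h}-1 =: \beta^2 h + \epsilon_\beta\), with \(|\epsilon_\beta|\le C\beta^4 h^2\) since \(h\) is bounded. Expanding the product \(\prod_n(1+(\e^{\beta^2h(\tilde S_n)}-1))\) gives a polynomial chaos sum
\[
\sum_{k\ge0}\ \sum_{0<n_1<\dots<n_k\le N}\ \sum_{x_1,\dots,x_k} \prod_{i=1}^k q_{n_i-n_{i-1}}(x_i-x_{i-1})\,\big(\e^{\beta_N^2h(x_i)}-1\big),
\]
where \(q_n(x)=\bP(\tilde S_n=x)\). This is a renewal-type sum.

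The key step is the one-step kernel. By the local CLT, \(q_n(x) = \frac{2}{2\pi n}\e^{-|x|^2/(2n)}(1+o(1))\) on the parity-allowed lattice \(\Zeven^2\), with sup-norm \(\le C/n\). The relevant quantity is
\[
U_N \coloneqq \sum_{n\le N}\sum_{x,y} h(x)\, q_n(y-x)\, h(y).
\]
Summability of \(h\) and the local CLT let me replace \(q_n(y-x)\) by \(q_n(0)\sim 1/(\pi n)\) for \(n\gg |x|^2,|y|^2\), with the region \(n\lesssim |x|^2\) contributing \(o(\log N)\) by dominated convergence. This yields \(U_N\sim \frac{\Sigma_h^2}{\pi}\log N\), so \(\beta_N^2U_N/\Sigma_h \to\hat\beta^2\) with the normalization \(\beta_N^2=\hat\beta^2\pi/(\Sigma_h\log N)\).

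Next I would treat the chaos as a sum over \(k\)-step chains. Each transition from one visit to the ``correlation region'' to the next carries weight \(\beta_N^2\sum_y q_n(y-x)h(y)\). Summed over \(n\) up to \(N\), this gives \(\hat\beta^2\) per step, but with time constraints \(\sum n_i\le N\). The standard i.i.d.\ argument of \cite{CSZ17b} then gives:
\begin{itemize}
\item for \(\hat\beta<1\), dominated convergence in \(k\), since term \(k\) is bounded by \(\hat\beta^{2k}(1+o(1))^k\) and tends to \(\hat\beta^{2k}\). This uses that the time-constraint is asymptotically irrelevant because \(\log(n_i)/\log N\) are i.i.d.-like uniform variables, and Dickman-type constraint effects vanish at fixed \(k\). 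The limit is then \(\sum_k\hat\beta^{2k}=(1-\hat\beta^2)^{-1}\).
\item for \(\hat\beta\ge1\), a lower bound. For fixed \(K\), the sum of the first \(K\) terms tends to \(\sum_{k\le K}\hat\beta^{2k}\to\infty\), using positivity of all terms since \(h\ge0\).
\end{itemize}
The \(\epsilon_\beta\) corrections are harmless: they contribute \(\beta_N^4\sum h^2\cdot O(\log N)\to0\) per step relative to \(\hat\beta^2\).

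The main obstacle is the uniform-in-\(k\) domination for \(\hat\beta<1\). The intermediate points \(x_i\) are not single sites but are spread over the support of \(h\), so the factor \(q_n(x_{i}-x_{i-1})\) must be uniformly bounded by \(C/n\) and replaced by the diagonal value with small error. I would handle this by bounding each step by \(\beta_N^2\Sigma_h\sum_{n\le N}\sup_x q_n(x)\,\le \hat\beta^2(1+o(1))\). The point to check carefully is that this \(o(1)\) is uniform, which holds because \(\sup_x q_n(x)\le (1+\delta)/(\pi n)\) for \(n\ge n_\delta\), while the first \(n_\delta\) times contribute \(O(1/\log N)\). Then \(\sum_k(\hat\beta^2(1+\delta))^k<\infty\) justifies dominated convergence.
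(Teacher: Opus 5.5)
Your proposal is correct and is essentially the paper's proof of part (i) (Lemma~\ref{lem:sec-mom-summable}). It uses the same chaos expansion of the replica formula. The upper bound replaces each kernel by its supremum (the paper uses $q_{2n}(z)\le q_{2n}(0)$), sums out space to get $\Sigma_h$, and drops the time ordering to get a geometric series with ratio tending to $\hat\beta^2$. The lower bound comes from the local CLT on a bounded spatial window. Part (ii) is likewise taken from \cite{CCD25}.

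The differences are cosmetic. You use fixed $k$ and dominated convergence, whereas the paper proves the $\limsup$ and $\liminf$ bounds separately with a growing truncation $k_N$, which it needs later for Claim~\ref{claim:lower-bound}. Your heuristic that the constraint $\sum_i (n_i-n_{i-1})\le N$ is asymptotically irrelevant is made rigorous in the paper by restricting each increment to $n_i-n_{i-1}\le N/k_N$. That restriction only changes $\log N$ into $\log(N/k_N)\sim\log N$.
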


\begin{remark}
  It turns out that the phase transition for the second moment also corresponds to that of the point-to-plane partition function. 
  Indeed, either in the i.i.d.\ case (see~\cite{CSZ17b,CC22,CD25}) or in the correlated case~\eqref{def:h} (see \cite{CCD25}), we have 
  \begin{equation}
    \label{eq:log-normal}
    \log W_N^{\beta_N} \xrightarrow{(d)} \mathcal{N}(-\tfrac12 \varsigma^2(\hat \beta),\varsigma(\hat \beta)) \ \ \text{ if } \hat\beta <\hat\beta_c \,,
    \quad \text{ and } \quad 
    W_N^{\beta_N} \xrightarrow{(d)} 0 \ \ \text{ if } \hat\beta \geq \hat\beta_c \,.
  \end{equation}
\end{remark}

\subsubsection{Phase transition for diffusively averaged partition functions}

Let us now introduce the normalized partition function with starting point \(x\)
\begin{equation*}
  \label{def:W}
  W_N^{\beta}(x) \coloneqq \bE_x\bigg[\exp\Big( \sum_{n=1}^N  (\beta \omega(n,S_n)  -\lambda(\beta)) \Big) \bigg] \,.
\end{equation*}
We consider the field of diffusively rescaled partition functions \((W_N^{\beta_N}( \llb x \sqrt{N} \rrb))_{x\in \R^2}\), where \(\llb\cdot \rrb\) maps points of \(\R^2\) to its nearest neighbor in \(\Zeven^2 \coloneqq \{ z=(z_1,z_2) \in \Z^2 \,,\, z_1+z_2 \in 2 \Z\}\). 
More precisely, for any function \(\varphi \in L_1(\R^2)\), we let
\begin{equation}
  \label{def:W-phi-1}
  W_N^{\beta_N} (\varphi) \coloneqq \int_{\R^2} \varphi(z) W_N^{\beta_N}\big(\llb z\sqrt{N}\rrb\big) \dd z \,.
\end{equation}
Notice that by Fubini's theorem we have that \(\EE[W_N^{\beta_N} (\varphi)]=\int_{\R^2} \varphi(z) \dd z\).
Let us note that we may also write
\begin{equation}
  \label{def:W-phi-2}
  W_N^{\beta_N} (\varphi) = \sum_{x\in \Zeven^2} \varphi_N(x) W_N^{\beta_N}(x)  \,, \quad \text{ with } \varphi_N(x) = \frac{N}{2} \int_{|z|_1\leq \frac{1}{\sqrt{N}}} \varphi\Big(\frac{x}{\sqrt{N}} + z \Big) \dd z \,.
\end{equation}
In other words, we consider \((W_N^{\beta_N}(x))_{x\in \Zeven^2}\), diffusively rescaled, as a random measure on~\(\R^2\).
A reason for considering this random distribution is that it can be interpreted as a discretization of the stochastic heat equation; we refer to~\cite{CSZ24-rev}, in particular Section~1.3, for more details.

Though the log-normality of~\eqref{eq:log-normal} provides the precise behavior of single partition functions \(W_N^{\beta_N}(x)\), it does not give useful information on the diffusively rescaled partition functions.
However, our second main result shows that the phase transition identified in Theorem~\ref{thm:second-moment} for the second moment also holds at the level of the diffusively averaged partition functions, at the same critical point \(\hat\beta_c\) from~\eqref{def:critical-beta}.

\begin{theorem}
  \label{thm:fluctuations}
  Suppose that Assumption~\ref{main-assumption} holds with either~\eqref{finite-sum} or~\eqref{def:h} and that the scaling of \((\beta_N)_{N\geq 1}\) is as in Assumption~\ref{hyp:scaling}.
  Then, we have 
  \[
  \forall \varphi \in \mathcal{C}_c^{\infty}(\R^2), \qquad
  W_N^{\beta_N} (\varphi)  \xrightarrow[\;N\to\infty\;]{(d)}
  \begin{cases}
    \int_{\R^2} \varphi(z) \dd z  & \quad \text{if } \hat\beta<\hat\beta_c  \text{ (\textit{sub-critical})},\\
    0  & \quad \text{if } \hat\beta>\hat\beta_c    \text{ (\textit{super-critical})}.
  \end{cases}
  \]
\end{theorem}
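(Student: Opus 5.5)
The two regimes call for different arguments: a variance computation in the sub-critical case, and a fractional-moment bound with a change of measure in the super-critical case.

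\emph{Sub-critical regime $\hat\beta<\hat\beta_c$.} Since $\EE[W_N^{\beta_N}(\varphi)]=\int\varphi$, it suffices to prove that $\Var(W_N^{\beta_N}(\varphi))\to 0$. Using~\eqref{def:W-phi-2} and Gaussianity,
\[
  \EE\big[W_N^{\beta_N}(x)W_N^{\beta_N}(y)\big]=\bE_{x}\otimes\bE_{y}\Big[\exp\Big(\beta_N^2\sum_{n=1}^N h(S_n^{(1)}-S_n^{(2)})\Big)\Big].
\]
The variance is therefore $\sum_{x,y}\varphi_N(x)\varphi_N(y)\big(\EE[W_N(x)W_N(y)]-1\big)$. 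Expanding $e^{\beta_N^2\sum h}-1$ along the first time $n_1$ at which $h(S^{(1)}_{n_1}-S^{(2)}_{n_1})$ contributes, the difference walk must travel from $x-y$, typically of order $\sqrt N$, to a region where $h$ is non-negligible. After that point, the remaining factor is bounded by a renewal-type quantity. That quantity is uniformly bounded exactly when $\hat\beta<\hat\beta_c$, which is where the estimates behind Theorem~\ref{thm:second-moment} (and \cite{CCD25} under~\eqref{def:h}) are used, evaluated from an arbitrary point rather than from $0$.

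The prefactor is $\beta_N^2\sum_{n\le N}\sum_z h(z)\,\bP(S_n^{(1)}-S_n^{(2)}=z)$ averaged over $\varphi_N\otimes\varphi_N$. Under~\eqref{finite-sum} this is of order $\beta_N^2 \Sigma_h\cdot O(1)=O(1/\log N)$. Under~\eqref{def:h}, the sum $\sum_{|z|\le\sqrt N}h(z)\asymp(\log N)^{a+1}$ and $\beta_N^2\asymp(\log N)^{-(a+2)}$, which gives $O(1/\log N)$ again. Hence the variance vanishes. I would write this as a chaos expansion in $\hat\omega$ and bound the $\ell^2$ norm, separating the short-range and long-range contributions of $h$.

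\emph{Super-critical regime $\hat\beta>\hat\beta_c$.} Since $W_N(\varphi)\ge0$ for $\varphi\ge0$, and a general $\varphi$ can be split into positive and negative parts, it suffices to show that $\EE[W_N(\varphi)^{\theta}]\to0$ for some $\theta\in(0,1)$. By Hölder, for any event or tilt $\tPP$ with $\frac{\dd\tPP}{\dd\PP}=g$,
\[
  \EE[W^\theta]\le \EE[g^{-\theta/(1-\theta)}]^{1-\theta}\,\tEE[W]^{\theta}.
\]
I would take a Gaussian shift change of measure: translate $\hat\omega(n,y)\mapsto\hat\omega(n,y)-\delta_N\,\psi(n,y)$ over the space-time box of size $N\times\sqrt N$ around the support of $\varphi$. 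The tilt cost $\EE[g^{-\theta/(1-\theta)}]$ is bounded provided $\delta_N^2\|\psi\|_2^2=O(1)$. The choice of $\psi$ should mimic the direction in which $W$ is sensitive, namely $\psi\approx\beta_N$ times the expected local time. Under $\tPP$ each $W(x)$ is multiplied by $\exp\big(-\delta_N\beta_N\sum_n (h_0*\psi)(n,S_n)\big)$.

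The key step is that this exponent is large on typical paths. This requires the mean of $\beta_N\sum_n h_0*\psi$ to diverge, and this is precisely where $\hat\beta>\hat\beta_c$ enters. I would use a multi-scale (coarse-grained) version in which the box is cut into $(\log N)^{c}$ time scales. By the blow-up in Theorem~\ref{thm:second-moment}, on each scale the second moment of the truncated partition function exceeds a large constant $K$. This lets one build $\psi$ as a sum of second-chaos-type terms (a quadratic tilt) whose $\tPP$-effect on $W$ is a factor at most $e^{-cK}$ on each scale, while the total cost stays bounded.

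This construction is the main obstacle. A linear Gaussian shift alone likely only reaches a suboptimal $\hat\beta$, so a quadratic, second-moment-driven change of measure restricted to the relevant region seems necessary. Compared with the i.i.d.\ case, the extra difficulty is controlling the long-range correlations~\eqref{def:h} in the cost term, by bounding cross terms with $\sum_{|z|\le\sqrt N}h(z)$ as in the sub-critical step.
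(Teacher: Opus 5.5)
Your sub-critical half is fine and is essentially the paper's proof of Proposition~\ref{prop:boundvariance}. The ingredients are Chebyshev, the decomposition of $\EE[W_N^{\beta_N}(x)W_N^{\beta_N}(y)]-1$ at the first time index, the uniform bound~\eqref{eq:sup-covariances} on the remaining factor, and a prefactor $\beta_N^2\sum_{|z|\le\sqrt N}h(z)$ plus a long-range remainder, which is $O(1/\log N)$.

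The super-critical half has a genuine gap. The step you call ``the main obstacle'' is the whole content of that regime, and the mechanism you propose for it cannot work at this scaling, because a second-chaos (quadratic) statistic does not see the critical point.

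Take the i.i.d.\ case, which is covered by~\eqref{finite-sum}, and a statistic $X=\sum_{n<m}V_{(n,x),(m,y)}\,\omega_{n,x}\omega_{m,y}$. Shifting the environment by $\beta_N$ along a path drawn from $\bP_f$ moves the mean of $X$ by $\beta_N^2\langle V,Q\rangle\le\beta_N^2\|Q\|_2\,\Var(X)^{1/2}$, where $Q_{(n,x),(m,y)}=\bP_f(S_n=x,S_m=y)$. For $f$ spread on scale $\sqrt N$ one has $\|Q\|_2^2\asymp\log N$. So the shift is $O(\beta_N^2\sqrt{\log N})=o(1)$ standard deviations, for every kernel $V$; a sum of such forms over time scales is again of this type. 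Even for $f=\delta_0$ the shift is only $O(\hat\beta^2)$ standard deviations. The divergence of the second moment beyond $\hat\beta_c$ is a collective effect of chaoses of unbounded order, so a usable statistic must contain chaoses of diverging degree.

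There are three further problems. You give no argument converting ``second moment $\ge K$ on each block'' into a bounded-cost tilt with effect $e^{-cK}$. Theorem~\ref{thm:second-moment} gives no rate of divergence and says nothing about truncated versions. And the multiplicative effect $\exp(-\delta_N\beta_N\sum_n h_0*\psi)$ you write holds only for a linear shift.

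What is missing is a high-order-chaos proxy together with a quantitative divergence estimate. The paper supplies both using only second moments, in the following steps.

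\begin{enumerate}
  \item It reduces to Theorem~\ref{thm:key-theorem}, namely $\sup_{f\in\cM_1(\sqrt N)}\EE[W_N^{\beta_N}(f)\wedge 1]\to0$, plus a covering of the support of $\varphi$.
  \item It rescales to $f\in\cM_1(\sqrt{\rho N})$ with $\rho=1/\log N$.
  \item Instead of a Gaussian shift, it tilts by the size-biased law $\tPP_f=\EE[W_N^{\beta_N}(f)\,\cdot\,]$ and uses the event $\{X_N(f)>\frac12\tEE_f[X_N(f)]\}$. Here $X_N(f)$ is the chaos expansion of $W_N^{\beta_N}(f)$ restricted to sets of cardinality at most $1+k_N$ with $k_N=(\log\log N)^2$, of time-width at most $M_N=N^\gamma$, starting in $[\rho N,\frac N2]$.
  \item The identity $\tEE_f[X_N(f)]=\Var(X_N(f))$ reduces the $\PP$-side to $\Var(X_N(f))\gg\rho^{-1}$.
  \item This follows from Claim~\ref{claim:lower-bound}. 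At scale $M_N$ the intensity corresponds to some $\hat\beta'\in(\hat\beta_c,\hat\beta)$, and $\cV^{\beta_N}_{M_N,\le k_N}(0)\gtrsim((1-\varepsilon)(\hat\beta'/\hat\beta_c)^2)^{k_N}$, which exceeds any power of $\log N$. The proof goes through the Bessel-function identity of~\cite{CCD25} and a super-multiplicativity argument for the Riemann sums.
  \item The size-biased variance is handled in two parts. For times within $2M_N$ of each other, Gaussian hypercontractivity bounds third moments by second moments at $\sqrt2\beta_N$. Beyond that, time-decorrelation and local limit estimates suffice.
\end{enumerate}

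None of these ingredients appears in your plan, in particular chaos degree $k_N\to\infty$ and a super-polylogarithmic lower bound on truncated covariances.
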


\noindent
We leave the case \(\hat \beta = \hat\beta_c\) in~Theorem~\ref{thm:fluctuations} open, and we discuss it further in Section~\ref{sec:comments-beta-c}.

\subsection{Further comments}
\label{sec:comments}

Let us now discuss in more details some aspects of our results.

\subsubsection{About the scaling of Assumption~\ref{hyp:scaling}}

First of all, let us note that a simple replica-trick calculation, see \cite[Lem.~2.1]{CCD25}, shows that for \(x,y \in \Zeven^2\)
\begin{equation}
  \label{eq:covariances}
  \EE\big[ W_{N}^{\beta}(x) W_N^{\beta}(y) \big] = \bE_x \otimes \bE_y \Big[\e^{\beta_N^2 \mathcal{L}_N(S,S')} \Big]\,,
\end{equation}
where \(\mathcal{L}_N(S,S') \coloneqq \sum_{n=1}^N h(S_n-S_n')\), with \(S,S'\) two independent copies of the simple random walk.
Note that in the i.i.d.\ case, \(\mathcal{L}_N(S,S')\) is simply the overlap \(\sum_{n=1}^N \ind_{\{S_n=S_n'\}}\) between the two random walks \(S,S'\) up to time \(N\).

Then, a straightforward calculation (details are given in Appendix~\ref{sec:scaling-L}) shows that 
\begin{equation}
  \label{eq:scaling-L}
  \bE^{\otimes 2}[\mathcal{L}_N(S,S')] \sim 
  \begin{cases}
    C_h \log N & \quad \text{under~\eqref{finite-sum},}\\
    C_a (\log N)^{a+2}& \quad \text{under~\eqref{def:h}},
  \end{cases}
\end{equation}
with \(C_h \coloneqq\frac{\Sigma_h}{\pi}\) in the first case and \(C_a \coloneqq\frac{2^{-(a+1)}}{(a+1)(a+2)}\) in the second case. 

This shows that the ``first order'' calculation~\eqref{eq:scaling-L} provides the correct guess on how to scale~\(\beta_N\) in~\eqref{eq:covariances} to obtain a non-trivial limit, and indeed corresponds to the scaling chosen in Assumption~\ref{hyp:scaling}.
Let us stress here that the constant \(C_h\coloneqq\frac{\Sigma_h}{\pi} = (\mathfrak{C}_h)^{-2}\) somehow captures the correct critical point \(\hat\beta_c\) under \eqref{finite-sum}, whereas the constant \(C_a\coloneqq\frac{2^{-(a+1)}}{(a+1)(a+2)}\) \textit{does not capture} the correct critical point \(\hat\beta_c\) for non-summable correlations~\eqref{def:h}.
The correct critical points are however related to the convergence in distribution 
\[
\frac{\mathcal{L}_N(S,S')}{\bE^{\otimes 2}[\mathcal{L}_N(S,S')]} \xrightarrow[\;N\to\infty\;]{(d)} Y \,,
\]
where \(Y \sim \mathrm{Exp}(1)\) in the summable case~\eqref{finite-sum} (see Remark~\ref{rem:convergence}) and \(Y\) is  the first hitting time of \(1\) of a Bessel process of dimension \(2\alpha+ 2\) started at \(0\) in the non-summable case~\eqref{def:h} (see \cite[Thm.~1.4]{CCD25}).
In view of~\eqref{eq:covariances}, the critical value \(\hat\beta_c\) for the second moment is then captured by the critical value \(\lambda_c \coloneqq \sup\{\lambda, \bE[\e^{\lambda Y}] <+\infty\}\) for the Laplace transform of~\(Y\).

\subsubsection{On the proofs: second moments are enough}
\label{sec:comments-sec-moment}

We believe that a central interest of our paper is that our proofs rely \emph{only on second moment estimates}. 
Our results may be divided into two parts: (i)~lower bounds on the free energy and the subcritical case of Theorem~\ref{thm:fluctuations}; (ii)~upper bounds on the free energy and the super-critical case of Theorem~\ref{thm:fluctuations}.

\smallskip
(i)~The first set of results in fact relies on second moment upper bounds in the subcritical regime \(\hat\beta<\hat\beta_c\) from Theorem~\ref{thm:second-moment}.
The proofs follow some standard path and essentially only uses the fact that 
\begin{equation}
  \label{eq:sup-covariances}
  \sup_{x,y\in \Z^2} \EE\big[ W_{N}^{\beta_N}(x) W_N^{\beta_N}(y) \big] <+\infty \qquad \text{ if } \hat{\beta}<\hat{\beta}_c \,.
\end{equation}
This bound is easy in the summable case~\eqref{finite-sum}, see~\eqref{eq:covariances-summable}, and follows from \cite[Eq.~(15)]{CCD25} in the non-summable case~\eqref{def:h}.
Details on how the subcritical case of Theorem~\ref{thm:fluctuations} and lower bounds on the free energy derive from~\eqref{eq:sup-covariances} are provided in Sections~\ref{sec:subcriticalpart} and~\ref{sec:lower-bound} below.

\smallskip
(ii)~The second set of results constitutes the most difficult and technical part of the article.
In Section~\ref{sec:key-super-critical}, we explain how the upper bounds on the free energy reduce to a single ``super-critical'' statement which slightly generalizes Theorem~\ref{thm:fluctuations}, see Theorem~\ref{thm:key-theorem}.
The proof of this result then follows a well-established strategy based on a change of measure argument, but we stress that our change of measure uses a clever choice of a statistic which allows us to only use second moment estimates (details of the strategy are given in Section~\ref{sec:key-proof}).
We organize the proof to highlight that Theorem~\ref{thm:key-theorem} essentially derives from (\eqref{eq:sup-covariances} and) the following estimate: for any \(b>0\)
\begin{equation}
  \label{eq:divergence}
  \lim_{N\to\infty} (\log N)^{-b}\, \EE\big[ (W_{N}^{\beta_N})^2 \big] =+\infty \qquad \text{ if } \hat{\beta}>\hat{\beta}_c \,.
\end{equation}
The fact that the second moment diverges at \(\hat\beta =\hat\beta_c\) is standard (it follows from Theorem~\ref{thm:second-moment} together with the monotonicity of \(\beta \mapsto \EE[ (W_{N}^{\beta})^2]\)), but what~\eqref{eq:divergence} says is that the divergence is ``faster than any power of \(\log N\)''.
Let us mention that we actually need some variant of~\eqref{eq:divergence} with a ``truncated'' partition function: the precise statement that we use is given in Claim~\ref{claim:lower-bound}.

\subsubsection{About the critical case \texorpdfstring{\(\hat\beta =\hat\beta_c\)}{}}
\label{sec:comments-beta-c}

We have left open the critical case \(\hat\beta =\hat\beta_c\) in Theorem~\ref{thm:fluctuations}, but this regime is now understood for i.i.d.\ disorder.
In the i.i.d.\ setting, there is a \textit{critical window} for the parameter \(\beta_N\) around \(\frac{\sqrt{\pi}}{\sqrt{\log N}}\) (recall Assumption~\ref{hyp:scaling}, with \(\hat\beta_c=1\)).
Let us now state the main result of \cite{CSZ23}, in a slightly simpler form to highlight the analogy with Theorem~\ref{thm:fluctuations}. (The super-critical regime is proven in~\cite{BCT25}.)

\begin{theoremalpha}
  \label{thm:SHF}
  Assume that the disorder \(\omega\) is i.i.d.\ and define \((\beta_N)_{N\geq 0}\) by
  \begin{equation}
    \label{def:critical-window}
    \beta_N  = \frac{\sqrt{\pi}}{\sqrt{\log N}} \Big(1+ \frac{\vartheta_N}{\log N}\Big) \,,
  \end{equation}
  for some sequence \((\vartheta_N)_{N\geq 1}\) of real numbers.
  Then, we have that 
  \[
  \forall \varphi \in \mathcal{C}_c^{\infty}(\R^2), \qquad
  W_N^{\beta_N} (\varphi)  \xrightarrow[\;N\to\infty\;]{(d)}
  \begin{cases}
    \int_{\R^2} \varphi(z) \dd z  & \quad \text{if } \vartheta_N \to-\infty  \text{ (\textit{sub-critical})},\\
    \int_{\R^2} \varphi(z) \mathscr{Z}_1^{\vartheta}(\dd z) & \quad \text{if } \vartheta_N \to \vartheta \in \R  \text{ (\textit{critical})},\\
    0  & \quad \text{if } \vartheta_N \to +\infty   \text{ (\textit{super-critical})}.
  \end{cases}
  \]
  In the above, \((\mathscr{Z}_s^{\vartheta})_{s\geq 0}\) is measure-valued process, namely (a marginal of) the critical \(2d\) Stochastic Heat Flow (SHF) constructed by Caravenna, Sun and Zygouras~\cite{CSZ23}. 
  We also refer to \cite{Tsai24} for another construction of the SHF through a mollification of the stochastic heat equation.
\end{theoremalpha}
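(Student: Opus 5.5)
Since Theorem~\ref{thm:SHF} collects results of \cite{CSZ23} and \cite{BCT25}, I would organize the argument around the three regimes separately. In each, I would start from the polynomial chaos expansion of $W_N^{\beta_N}(\varphi)$. Writing $\e^{\beta_N\omega-\lambda(\beta_N)}=1+\sigma_N\xi$, where $\xi$ is centered with unit variance and $\sigma_N^2=\e^{\lambda(2\beta_N)-2\lambda(\beta_N)}-1\sim\beta_N^2$, we get
\[
W_N^{\beta_N}(\varphi)-\int\varphi=\sum_{k\ge1}\sigma_N^k\sum_{n_1<\dots<n_k,\,x_1,\dots,x_k}\varphi_N\ast q_{n_1}(x_1)\prod_{i\ge2}q_{n_i-n_{i-1}}(x_i-x_{i-1})\,\xi_{n_1,x_1}\cdots\xi_{n_k,x_k}.
\]
Here $q$ is the random walk kernel.

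\emph{Sub-critical regime} ($\vartheta_N\to-\infty$). Here I would simply show that $\Var(W_N^{\beta_N}(\varphi))\to0$. By orthogonality of chaoses, the variance equals $\sum_{x,y}\varphi_N(x)\varphi_N(y)(\bE_x\otimes\bE_y[\e^{\beta_N^2\mathcal L_N}]-1)$ in the i.i.d.\ analogue of~\eqref{eq:covariances}. The replica overlap has a renewal structure, with $\sigma_N^2 R_N\approx 1+\vartheta_N/\log N$ where $R_N=\sum_{n\le N}q_{2n}(0)\sim\frac{\log N}{\pi}$. I would then use Dickman-type renewal estimates: the collision renewal restricted to times $\le N$ sums to $O(1/(\log N-\ldots))$-type quantities. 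Averaging over diffusively separated starting points kills the first collision (this costs a factor $1/\log N$), and the remaining geometric sum is bounded by $\sum_k(\sigma_N^2R_N)^k\le|\vartheta_N|^{-1}\log N$. The product is then $O(1/|\vartheta_N|)\to0$. This gives convergence in $L^2$, hence in distribution, to $\int\varphi$.

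\emph{Critical window} ($\vartheta_N\to\vartheta$). This is the construction of the Stochastic Heat Flow and is the main obstacle. I would follow the route of \cite{CSZ23}. First, establish uniform bounds on the $h$-th moments of $W_N^{\beta_N}(\varphi)$ for all $h$, via the operator/Schur-norm method of Lygkonis--Zygouras and its refinement. This gives tightness, and ensures that any limit has the right second moment, computed from the Dickman subordinator. Next, identify the limit uniquely through a coarse-graining: tile space-time into mesoscopic boxes of size $\epsilon N\times\sqrt{\epsilon N}$, and show that $W_N^{\beta_N}(\varphi)$ is $L^2$-close, uniformly in $N$, to a coarse-grained polynomial $\mathscr Z^{(\epsilon)}(\varphi\,|\,\Theta_{N,\epsilon})$ of block variables $\Theta_{N,\epsilon}$. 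Then use a Lindeberg principle (using the fourth-moment bounds) to replace these block variables by those of any other sequence. This shows that all subsequential limits coincide and are independent of the disorder law. Uniqueness of the law, and hence convergence to $\mathscr Z_1^\vartheta$, follows. The hard step is the uniform control of the error of coarse-graining across all scales, which requires the high-moment bounds in a $\log$-weighted form.

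\emph{Super-critical regime} ($\vartheta_N\to+\infty$). Here I would show that $\EE[W_N^{\beta_N}(\varphi)^{\gamma}]\to0$ for some $\gamma\in(0,1)$, via the fractional moment/change-of-measure strategy as in \cite{BCT25}. Cut $\R^2$ into boxes of side $\sqrt N$ and apply $(\sum a_i)^\gamma\le\sum a_i^\gamma$. Under a tilted law that lowers the environment along a well-chosen quadratic statistic (the second chaos restricted to the box), Hölder's inequality reduces matters to showing two things: the tilt has bounded cost, and the tilted expectation of $W$ is small. The latter follows because $\vartheta_N\to\infty$ makes the second moment blow up faster than the variance of the statistic. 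Monotonicity in $\vartheta$, combined with the critical case as an input for a finite-$\vartheta$ starting point, lets one first fix $\vartheta$ large and then conclude.
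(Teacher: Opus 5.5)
This statement is only quoted in the paper: the critical window is from \cite{CSZ23} and the super-critical regime from \cite{BCT25}, so there is no in-paper proof to compare with. Your sub-critical argument is sound. The first-collision decomposition gives $\Var(W_N^{\beta_N}(\varphi))\le C\sigma_N^2\,(1-\sigma_N^2R_N)^{-1}=O(1/|\vartheta_N|)$. One caveat: $\sigma_N^2=\beta_N^2+\EE[\omega^3]\beta_N^3+O(\beta_N^4)$, so $\sigma_N^2R_N=1+(2\vartheta_N+O(1))/\log N$ needs $\EE[\omega^3]=0$, or the window parametrized through $\sigma_N^2$ as in \cite{CSZ23}. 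Your critical-window paragraph is a fair outline of \cite{CSZ23}.

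The genuine gap is the super-critical regime: a second-chaos statistic cannot detect $\vartheta_N\to+\infty$. Take any $X=\sum_{a\neq b}V(a,b)\,\xi_a\xi_b$. Only the second-chaos component $W^{(2)}$ of $W_N^{\beta_N}(f)$ contributes to $\tEE_f[X]=\EE[X\,W_N^{\beta_N}(f)]$, hence
\[
\tEE_f[X]^2\le \Var(X)\,\EE\big[(W^{(2)})^2\big]\,,\qquad \EE\big[(W^{(2)})^2\big]=\sigma_N^4\,\bE_f^{\otimes 2}\Big[\tbinom{\mathcal L_N}{2}\Big]\,,\quad \mathcal L_N=\sum_{n\le N}\ind_{\{S_n=S_n'\}}\,.
\]
For $f$ spread over a box of side $\sqrt N$, this is $O(\sigma_N^4\log N)=O(1/\log N)$. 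Even for $f=\delta_0$ it is only $O(1)$, whatever $\vartheta_N$. So under $\tPP_f$ (which is what $\EE[g\,W]$ sees in your H\"older step), $X$ moves by at most a vanishing (at best bounded) fraction of its standard deviation. The mechanism you invoke, ``the second moment blows up faster than the variance of the statistic'', is therefore not there. The $k$-th chaos contributes at most $C\sigma_N^2(\sigma_N^2R_N)^{k-1}$ to $\Var(W_N^{\beta_N}(\varphi))$, with $\sigma_N^2R_N=1+O(\vartheta_N/\log N)$. So each fixed chaos carries only $O(1/\log N)$, and the blow-up as $\vartheta_N\to\infty$ lives in chaoses of order $k\gtrsim\log N/\vartheta_N$, all orthogonal to $X$. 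This is the same loss that makes quadratic-form changes of measure non-sharp in $d=2$: they give only $\tf(\beta)\le-\e^{-c\beta^{-4}}$ in \cite{Lac10a}.

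What is needed is a statistic containing chaos of diverging order. \cite{BCT25} use the size-biased bound $\EE[W\wedge 1]\le\PP(A)+\tPP_f(A^c)$ with a chaos-truncated, coarse-grained partition function as proxy, controlled through second- and third-moment estimates. Section~\ref{sec:key-proof} of the paper is a simplified version of this for fixed $\hat\beta>\hat\beta_c$. There, truncation at order $k_N=(\log\log N)^2$ plus hypercontractivity suffices only because the truncated second moment grows geometrically in $k$. That fails inside the window, where $(\sigma_N^2R_N)^k\approx 1$ for $k\ll\log N/\vartheta_N$.

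Your closing reduction via monotonicity in $\vartheta$ is legitimate, but it only transfers the problem to showing $\mathscr Z_1^{\vartheta}(\varphi)\to0$ as $\vartheta\to\infty$, which is the main result of \cite{BCT25}. That still has to be proved on the discrete polymer at fixed large $\vartheta$, where the same obstruction applies. Coarse-graining to smaller scales does not help either: inside the window every scale $N^{\gamma}$ with $\gamma<1$ is sub-critical, since it corresponds to $\hat\beta^2\approx\gamma<1$.
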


In particular, Theorem~\ref{thm:SHF} identifies the correct critical window~\eqref{def:critical-window} and shows the convergence of the diffusively rescaled partition functions to the SHF, which is some universal, non-Gaussian random measure on \(\R^2\).
Since its introduction in~\cite{CSZ23}, the SHF has been the object of an intense activity, and we refer to \cite{CSZ25} for some account of recent developments.

As far as the case of a spatially correlated noise is concerned, we expect that Theorem~\ref{thm:SHF} still holds with the same SHF limit even with non-summable critical correlations, for \((\beta_N)_{N\geq 1}\) in some well-chosen critical window.
Some indication in this direction comes from~\cite{BCC26}, where the Edwards--Wilkinson fluctuations of $W_N^{\beta_N}(\varphi)$ are studied in the subcritical regime \(\beta<\hat\beta_c\) in the non-summable case~\eqref{def:h}. 
In particular, the covariances at starting points at distance of order $\sqrt{N}$ (the relevant scale for $W_N^{\beta_N}(\varphi)$) have the same asymptotic behavior as in the i.i.d.\ setting.
This suggests that the effect of spatial correlations vanishes at the diffusive scale. 
We may therefore expect that, within a critical window around \(\hat\beta_c \mathfrak{C}_a (\log N)^{- (a+2)/2}\), the second and higher moments of the diffusively scaled partition functions have the same limiting kernels as in the i.i.d.\ case, suggesting that the convergence to the same SHF limit as in Theorem~\ref{thm:SHF} holds.

\subsubsection{The case of dimension \texorpdfstring{\(d\geq 3\)}{}}
\label{sec:d-3}

We have here focused on the case of the critical dimension \(d=2\) and we have left aside the case of dimension \(d\geq 3\).
Let us mention that the Stochastic Heat Equation (SHE) in \(d\geq 3\) with spatially correlated noise has attracted some attention in recent years, in particular in the case of critical correlations \(h(x) \sim |x|^{-2}\), see e.g.\ \cite{MT04,KT24,DHL25}.

In their seminal work, Mueller and Tribe~\cite{MT04} showed the existence of a measure-valued solution to the SHE in dimension \(d\geq 3\) with critical correlations \(h(x)=|x|^{-2}\), in some high-temperature regime.
If we translate their result in the directed polymer context, we get the following: in dimension \(d\geq 3\) with spatial correlations verifying \(h(x) \sim |x|^{-2}\) as \(|x|\to \infty\), there exists some \(\tilde{\beta}_c>0\) such that:
\begin{equation}
  \label{eq:Tribe-Mueller}
  \forall \varphi \in \mathcal{C}_c^{\infty}(\R^2), \qquad
  W_N^{\beta} (\varphi)  \xrightarrow[\;N\to\infty\;]{(d)}
  \int_{\R^2} \varphi(z) \mathscr{Z}_1^{\beta}(\dd z) \qquad \text{ if } \beta<\tilde{\beta}_c\,,
\end{equation}
where \((\mathscr{Z}^{\beta}_s)_{s\geq 0}\) is the measure-valued process constructed in~\cite{MT04}.
Note here that \textit{no normalization} is needed for the inverse-temperature \(\beta\): in particular, \eqref{eq:Tribe-Mueller} holds for any fixed (small enough) \(\beta\).
We also stress that~\eqref{eq:Tribe-Mueller} is not a consequence of~\cite{MT04} but follows from~\cite{CG26}.

Let us point out (again by~\cite{CG26})  that, even though one can show that \(W_N^{\beta} \to 0\) for any \(\beta>0\), the convergence~\eqref{eq:Tribe-Mueller} proves that the free energy \(\tf(\beta)\) is equal to~\(0\) for all \(\beta<\tilde{\beta}_c\), since one can show that \(W_N^{\beta} (\varphi)\to 0\) whenever \(\tf(\beta)<0\).
In particular,~\eqref{eq:Tribe-Mueller} disproves Lacoin's conjecture that the free energy behaves as \(-\exp(-\frac{cst.}{\beta^2})\) as \(\beta\downarrow 0\), since the free energy critical point is \(\beta_c\geq \tilde{\beta}_c>0\).
This is indeed what is proven in \cite[Thm.~1.2-(i)]{CRW26} in the continuous setting of Lacoin~\cite{Lac11}.
It would be interesting to determine under which condition on the decay of correlations the critical point for the free energy (or for~\eqref{eq:Tribe-Mueller}) would indeed be equal to~\(0\).

\smallskip
Finally, let us mention the work~\cite{DHL25}, which considers the (non-linear) SHE in dimension \(d\geq 3\) with critical correlations \(h(x)\sim |x|^{-2}\), in some intermediate disorder regime.
More precisely, if we translate the main result of~\cite{DHL25} to the directed polymer context, we get the following: setting 
\[
\beta_N \coloneqq \frac{\hat\beta}{\sqrt{\log N}} \,,
\]
there is some \(\hat{\beta}_0\) such that for \(\hat\beta<\hat{\beta}_0\) we have that \(W_N^{\beta_N} (x)\) converges in distribution to some explicit limit (expressed in terms of a suitable forward-backward stochastic differential equation).
This has the flavor of the log-normality~\eqref{eq:log-normal} but \cite[Rem.~2.1]{DHL25} actually stresses that \(\hat{\beta}_0\) \textit{should not be a critical threshold}.
In fact \cite[Thm.~1.5]{DHL25} shows that \(\sup_{N\geq 0} \EE[(W_N^{\beta_N})^2] <+\infty\) for all \(\hat\beta \in (0,\infty)\), and there is therefore \textit{no phase transition for the second moment} at the scale \(\beta_N \coloneqq \frac{\hat\beta}{\sqrt{\log N}}\), contrary to Theorem~\ref{thm:second-moment} in dimension \(d=2\).
This stresses that the case of dimension \(d\geq 3\) with critical correlations \(h(x)\sim |x|^{-2}\) is of different nature than that of the critical dimension \(d=2\).

\section{Some preliminaries: chaos expansion and covariances}

\subsection{Chaos expansion of the partition function}

We now introduce and comment on the chaos decomposition of the partition function, which has become a crucial tool in the study of the directed polymer model since~\cite{CSZ17a}.
Recall that 
\[
W_N^{\beta_N}(x) = \bE_x\Big[ \exp\Big(\sum_{n=1}^N (\beta_N \omega_{n,S_n} -\frac12 \beta_N^2) \Big) \Big] = \bE_x\Big[ \exp\Big(\sum_{n=1}^N  \sum_{x\in \Z^2}(\beta_N \omega_{n,x} -\frac12 \beta_N^2) \ind_{\{S_n=x\}} \Big) \Big] \,.
\]
Let us introduce 
\begin{equation}
  \label{def:xi}
  \xi_{n,x} = \xi_{n,x}^{(\beta_N)} \coloneqq \e^{\beta_N \omega_{n,x} - \frac12 \beta_N^2} -1\,,
\end{equation}
and note that \(\EE[\xi_{n,x}]=0\), \(\Var(\xi_{n,x}) = \e^{\beta_N^2}-1 \sim \beta_N^2\).
With this notation, we have
\begin{equation*}
\begin{split}
  W_N^{\beta_N}(x)
  & = \bE_x\Big[ \prod_{n=1}^N  \prod_{x\in \Z^2}\big(1+ \xi_{n,x} \ind_{\{S_n=x\}}\big)\Big] \\
  & = 1+ \sum_{k=1}^{\infty} \sum_{1\leq n_1 <\ldots< n_k\leq N} \sum_{x_1,\ldots, x_k \in \Z^2} \bP_x( S_{n_i}=x_i \ \forall \ 1\leq i \leq k) \prod_{i=1}^k \xi_{n_i,x_i} \,,
\end{split}
\end{equation*}
where we have simply expanded the product and used Fubini.

This expression is known as a chaos expansion of \(W_N^{\beta_N}(x)\) and may be reformulated in a more compact way by introducing some further notation.
For some finite set \(A\subseteq \N \times \Z^2\), we let 
\[
 \xi(A) = \xi^{(\beta_N)}(A) \coloneqq \prod_{(n,x)\in A} \xi_{n,x} \qquad \text{ and } \qquad q^{(x)}(A) \coloneqq \bP_x( S_m=y \,, \forall \ (m,y)\in A) \,.
\]
By convention, both terms are equal to \(1\) when \(A=\emptyset\).
In particular, we stress that \(q^{(x)}(A)\) can be non-zero only for \(A \subseteq\N \times \Z^2 \) of the form \(\{(n_1,x_1), \ldots, (n_k,x_k)\}\) with \(n_1<\ldots <n_k\), in which case 
\begin{equation}
  q^{(x)}(A) =  q_{n_1}(x_1-x) \prod_{i=2}^k q_{n_i-n_{i-1}}(x_i-x_{i-1})  \,,
\end{equation}
where \(q_{n}(z) \coloneqq \bP(S_n=z)\) is the random walk kernel.
With these notations, we may write 
\begin{equation}
  \label{def:chaos-expansion}
  W_N^{\beta_N}(x) = \sum_{A\subseteq \llbracket 1,N\rrbracket\times \Z^2 } q^{(x)}(A) \xi(A) \,.
\end{equation}

For \(f:\Z^2 \to \R_+\), let us also introduce the notation \(W_N^{\beta}(f) \coloneqq \sum_{x\in \Z^2} f(x) W_{N}^{\beta}(x)\).
We can again write the analogous chaos expansion of \(W_N^{\beta}(f)\) simply by writing
\[
W_N^{\beta_N}(f) = \sum_{A\subseteq \llbracket 1,N\rrbracket\times \Z^2 } q^{(f)}(A) \xi(A) \qquad \text{ with }\quad q^{(f)}(A) \coloneqq \sum_{x\in \Z^2} f(x)q^{(x)}(A) \,.
\]

\subsection{Covariances}

Let us stress here that in the case of an i.i.d.\ environment, the chaos expansion~\eqref{def:chaos-expansion} corresponds to some orthogonal \(L^2(\PP)\) decomposition, since we have \(\EE[\xi(A) \xi(B)] =0\) in that case.
However, in the spatially correlated case, \(\xi(A)\) and \(\xi(B)\) are orthogonal in \(L^2(\PP)\) if and only if the time indices of \(A\) and \(B\) do not coincide.
More precisely, if \(A=\{(n_1,x_1), \ldots, (n_k,x_k)\}\) with \(n_1<\ldots <n_k\) and \(B=\{(m_1,y_1), \ldots, (m_j,y_j)\}\) with \(m_1<\ldots <m_j\), a simple calculation gives that, if \(k=j\),
\begin{equation}
  \label{eq:cov-xi}
  \EE[ \xi(A) \xi(B)] = (\beta_N^2)^k \prod_{i=1}^k h_N(x_i-y_i) \ind_{\{n_i=m_i\}} \qquad \text{ with } h_N(z) \coloneqq \frac{\e^{\beta_N^2 h(z)}-1}{\beta_N^2} \,,
\end{equation}
and \(\EE[\xi(A) \xi(B)] =0\) otherwise (\textit{i.e.}\ if the time indices of \(A\) and \(B\) do not coincide).
We refer to Figure~\ref{fig:time-indices} for an illustration.
Note also that \(h(z)\leq h_N(z)\leq \e^{\beta_N^2} h(z)\) so we may use that \(h_N(z) = (1+o(1)) h(z)\), where the \(o(1)\) is uniform in \(z\in \Z^2\).

\begin{figure}
  \centering
  \begin{tikzpicture}[
      xscale=1.1, yscale=1.1,
      solid dot/.style={circle, fill=blue, inner sep=0pt, minimum size=6pt},
      empty dot/.style={circle, draw=black, very thick, fill=white, inner sep=0pt, minimum size=6pt},
      green line/.style={color=green!50!black, thick,<->},
      path line/.style={thick, dashed}
  ]
  \draw[thick, ->] (0,0) -- (11,0);
  \draw[thick] (1,0.1) -- (1,-0.1) node[below=5pt] {$n_1=m_1$};
  \draw[thick] (2.5,0.1) -- (2.5,-0.1) node[below=5pt] {$n_2=m_2$};
  \node at (5.5,-0.5) {$\dots$};
  \draw[thick] (9.5,0.1) -- (9.5,-0.1) node[below=5pt] {$n_k=m_k$};
  \draw[path line, blue] (1,2) -- (2.5,1.8) -- (4.5,2.6) -- (6.5,1.5) -- (8,1.3) -- (9.5,2.8);
  \draw[path line] (1,0.7) -- (2.5,1.2) -- (4.5,0.8) -- (6.5,2.4) -- (8,2.8) -- (9.5,1.9);
  \draw[green line] (1,1.85) -- (1,0.85);
  \draw[green line] (2.5,1.65) -- (2.5,1.35);
  \draw[green line] (4.5,2.45) -- (4.5,0.95);
  \draw[green line] (6.5,1.65) -- (6.5,2.25);
  \draw[green line] (8,1.45) -- (8,2.65);
  \draw[green line] (9.5,2.05) -- (9.5,2.65);
  \node[solid dot] at (1,2) {};
  \node[empty dot] at (1,0.7) {};
  \node[above right] at (2.5,1.9) {\blue $B$};
  \node[below right] at (2.5,1.1) {$A$};
  \node[solid dot] at (2.5,1.8) {};
  \node[empty dot] at (2.5,1.2) {};
  \node[solid dot] at (4.5,2.6) {};
  \node[empty dot] at (4.5,0.8) {};
  \node[solid dot] at (6.5,1.5) {};
  \node[empty dot] at (6.5,2.4) {};
  \node[solid dot] at (8,1.3) {};
  \node[empty dot] at (8,2.8) {};
  \node[solid dot] at (9.5,2.8) {};
  \node[empty dot] at (9.5,1.9) {};
  \end{tikzpicture}
  \caption{Illustration of the covariance computation~\eqref{eq:cov-xi}. The elements of two sets of indices \(A,B\) are represented by blue and white dots. 
  The covariance \(\EE[\xi(A)\xi(B)]\) is non-zero only if the time-indices of \(A,B\) coincide, but the spatial positions do not need to coincide: the covariance depends on the distances between the points of \(A,B\) at each time index, through the function \(h_N\).}
  \label{fig:time-indices}
\end{figure}

Thus, using the chaos expansion~\eqref{def:chaos-expansion}, we get that 
\begin{align}
  \label{chaos-covariances}
    & \EE \big[ W_N^{\beta_N}(x)W_N^{\beta_N}(y) \big] 
    = \sum_{A,B\subset \llb 1,N\rrb \times \Z^2} q^{(x)}(A)q^{(y)}(B) \EE[\xi(A)\xi(B)]  \\
    & = \sum_{k=0}^{\infty} (\beta_N)^k \sum_{1\leq n_1<\cdots<n_k \leq N} \sumtwo{x_1,\ldots, x_k\in \Z^2}{y_1,\ldots, y_k \in \Z^2} \prod_{i=1}^{k} h_N(x_i-y_i) q_{n_i-n_{i-1}}(x_i-x_{i-1})  q_{n_i-n_{i-1}}(y_i-y_{i-1}) \,, \notag
\end{align}
where by convention \(n_0=0\), \(x_0=x\), \(y_0=y\).
The second line simply comes from~\eqref{eq:cov-xi}, using that \(\EE[\xi(A)\xi(B)]=0\) if \(A,B\) do not share the same time-indices (but contrary to the i.i.d.\ case their points may differ in space, see Figure~\ref{fig:covariances}).
Finally, let us stress that because of parity constraints, the above sum is restricted to \(x_i-y_i \in \Zeven^2\).

\section{Proof of the main theorems}

In this section, we show how the super-critical case of Theorem~\ref{thm:fluctuations} and upper bounds on the free energy follow from a single ``super-critical'' statement.
We also prove the subcritical case of Theorem~\ref{thm:fluctuations} and lower bounds on the free energy, relying only on second moment upper bounds (namely~\eqref{eq:sup-covariances}).

\subsection{A key super-critical estimate and its consequences}
\label{sec:key-super-critical}

We now state a result which is slightly stronger than the super-critical regime in Theorem~\ref{thm:fluctuations}.
We let \(\mathcal{M}_1(R)\) be the set of probability measures with support on \(B_R=\{ x\in \Zeven^2, \|x\|_{\infty}\leq R  \}\); for simplicity, we also denote \(\mathcal{M}_1= \mathcal{M}_1(\infty)\) the set of probability measures on \(\Zeven^2\).
Recall also that we have defined \(W_N^{\beta}(f) =\sum_{x\in \Z^d} f(x) W_N^{\beta}(x)\).

\begin{theorem}
  \label{thm:key-theorem}
  Assumption~\ref{main-assumption} holds with either~\eqref{finite-sum} or~\eqref{def:h} and that the scaling of \((\beta_N)_{N\geq 1}\) is as in Assumption~\ref{hyp:scaling} with \(\hat\beta>\hat\beta_c\).
  Then we have that 
  \[
    \sup_{f\in \mathcal{M}_1(\sqrt{N})} \EE[W_N^{\beta_N}(f) \wedge 1]   \xrightarrow[\;N\to\infty\;]{} 0 \,.
  \]
\end{theorem}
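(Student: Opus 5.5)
We prove Theorem~\ref{thm:key-theorem} by a change of measure, following the fractional-moment/size-biasing philosophy but using only second moments. Fix \(f\in\mathcal{M}_1(\sqrt N)\) and let \(\tPP\) be a tilted law of the environment, to be chosen so that \(\tEE[W_N^{\beta_N}(f)]\) is small while \(\dd\tPP/\dd\PP\) has controlled second moment. Cauchy--Schwarz gives
\[
\EE[W_N^{\beta_N}(f)\wedge 1] \le \tEE[W_N^{\beta_N}(f)\wedge 1] + \PP(\mathcal{E}^c)\cdot 1 \quad\text{or}\quad \EE[W\wedge 1]\le \tEE[W]^{1/2}\,\EE\big[(\dd\PP/\dd\tPP)\big]^{1/2},
\]
and it is enough to produce \(\tPP\) with \(\tEE[W_N^{\beta_N}(f)]\le \delta\) and \(\EE[(\dd\PP/\dd\tPP)\wedge \cdots]\) bounded uniformly in \(f\). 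A simpler event version works as well: find an event \(\mathcal{E}_N\) with \(\PP(\mathcal{E}_N^c)\to 0\) uniformly in \(f\) and \(\EE[W_N^{\beta_N}(f)\ind_{\mathcal{E}_N}]\to 0\). The bound \(\EE[W\ind_{\mathcal{E}}]=\tilde{\PP}^f(\mathcal{E})\) holds, where \(\tilde{\PP}^f\) is the size-biased environment. So we need a statistic \(X_N\) that is small under \(\PP\) and large under the size-biased law.

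\emph{Choice of statistic.} Split \(\llb 1,N\rrb\) into many time blocks at scales \(N^{t}\), or use a single block at mesoscopic scale. Take \(X_N\) to be a normalized sum, over space–time boxes covering the relevant region (\(\|x\|\le C\sqrt N\)), of the chaos statistic \(\sum_{A} q(A)\xi(A)\), i.e.\ partition-function-type linear statistics \(W_{\ell}(\cdot)-1\) truncated to the first chaoses and to a time window of length \(\ell=N^{1-\epsilon}\). Under \(\PP\), \(X_N\) is centered, and its variance is computed by~\eqref{chaos-covariances}; by averaging over many nearly-uncorrelated boxes, \(\Var_{\PP}(X_N)\) stays bounded, so \(\PP(X_N\ge K)\le \Var/K^2\). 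Under the size-biased law \(\tilde{\PP}^f\) (Gaussian, with mean shifted by \(\beta_N\) times the correlated path occupation), the mean of \(X_N\) becomes \(\EE[X_N W_N^{\beta_N}(f)]\), which is again a second-moment quantity of the form \(\EE[W_{\ell}(g) W_N(f)]\), with the overlap exponential truncated. This is where~\eqref{eq:divergence}, in its truncated form (Claim~\ref{claim:lower-bound}), enters. Supercriticality makes this mean grow faster than any power of \(\log N\), whereas the variance of \(X_N\) under \(\tilde{\PP}^f\) grows only polynomially in \(\log N\). The latter is controlled via~\eqref{eq:sup-covariances} at a slightly subcritical \(\hat\beta'<\hat\beta_c\) for the off-diagonal parts, together with the fact that the spatial correlations are weak at distance \(\sqrt{\ell}\). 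Chebyshev then gives \(\tilde{\PP}^f(X_N< K)\to0\), and hence \(\EE[W\ind_{\{X_N<K\}}]\to0\) uniformly in \(f\), while \(\PP(X_N\ge K)\le C/K^2\). Letting \(N\to\infty\) and then \(K\to\infty\) concludes.

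\emph{Main obstacle.} The hard step is choosing the truncation in \(X_N\) so that two requirements hold at once. Its \(\PP\)-variance must stay \(O(1)\), or at most a power of \(\log N\) after normalization. Its size-biased mean must be the divergent quantity of Claim~\ref{claim:lower-bound}, which must dominate that power. For this, I would truncate the overlap to chaoses whose time gaps lie in a window \([N^{\epsilon}, N^{1-\epsilon}]\) and use \(\tilde J_\alpha\)-type renewal estimates to show the growth exceeds every power of \(\log N\). The non-summable case requires care, because \(h\) couples distant boxes. I would handle this by bounding cross-box covariances by \(\sum h(x-y)\) over separated boxes and using the \((\log|x|)^a/|x|^2\) decay, which costs only logarithmic factors that the superpolynomial divergence absorbs. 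Uniformity over \(f\in\mathcal{M}_1(\sqrt N)\) follows because all estimates depend on \(f\) only through its support lying in \(B_{\sqrt N}\).
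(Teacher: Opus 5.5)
\paragraph{Main gap: concentration of $X_N$ under the size-biased law.} Your skeleton matches the paper's: a size-biased law, a truncated-chaos statistic, Chebyshev under both $\PP$ and $\tPP_f$, and Claim~\ref{claim:lower-bound} for the divergence. The step that carries the proof, however, is not established: that $X_N$ concentrates under $\tPP_f$. The reason you give for it, ``$\tVV_f(X_N)$ grows only polynomially in $\log N$, by \eqref{eq:sup-covariances} and weak spatial correlations'', misses the actual source of size-biased covariance.

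Under $\tPP_f$ the environment is shifted along \emph{one} random-walk path $S$ drawn from $\bP_f$. So the contributions of all time blocks to $X_N$ are correlated through this common path. This correlation is present even for i.i.d.\ disorder, and neither subcritical covariance bounds nor the decay of $h$ say anything about it. At a single diffusive scale it is not small:
\begin{itemize}
\item With boxes covering $\{\|x\|\le C\sqrt N\}$ and $C$ fixed, as you write, the planted path leaves this region before time $\frac12 N$ with probability bounded away from $0$. Hence $\tEE_f[X_N\mid S]$ has order-one relative fluctuations.
\item With $q^{(f)}$-weights (the choice giving $\tEE_f[X_N]=\Var(X_N)$) and times in $[\rho N,\frac12 N]$ with $\rho$ fixed, the same happens. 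The conditional mean is essentially $\beta_N^2\sum_z h\mathcal{V}\cdot\sum_n q^{(f)}_n(S_n)$, a nondegenerate functional of the macroscopic path. Correspondingly, the paper's off-diagonal bound is $(\gep_N+\log(2/\rho)^{-1})\,O(\tEE_f[X_N]^2)$, which is not small for fixed $\rho$.
\end{itemize}
In both cases $\tVV_f(X_N)$ is of the order of $\tEE_f[X_N]^2$. After your normalization this is superpolynomially large, not polynomial in $\log N$, and Chebyshev does not give $\tPP_f(X_N<K)\to0$.

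\paragraph{What the paper does instead.} The missing idea is the change of scale \cite[Prop.~4.5]{BCT25}:
\[
\sup_{\cM_1(\sqrt N)}\EE[W_N^{\beta_N}(f)\wedge 1]\le 8\sup_{\cM_1(\sqrt{\rho N})}\EE[W_N^{\beta_N}(f)\wedge\rho^{-1}], \qquad \rho=(\log N)^{-1}.
\]
With starting mass on scale $\sqrt{\rho N}$ and times in $[\rho N,\frac12 N]$, there are $\log(2/\rho)\to\infty$ nearly independent scales. The local-limit estimate $\mathbf{S}_{n_2}(z)\lesssim n_1/n_2^2$ then makes the off-diagonal part $o(\tEE_f[X_N]^2)$. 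The price $\rho^{-1}$ in front of $\PP(A_N)$ is affordable only because $\tEE_f[X_N]=\Var(X_N)$ grows faster than any power of $\log N$. Some other device neutralizing the macroscopic randomness of the planted path might also work, but your proposal neither identifies the obstruction nor supplies one.

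\paragraph{Near-diagonal terms.} You also leave these unaddressed. There the size-biased covariances are genuine third moments of the $\xi$'s, which \eqref{eq:sup-covariances} does not bound. The paper controls them using:
\begin{itemize}
\item FKG positivity of the third-moment terms;
\item Gaussian hypercontractivity ($L^2\to L^3$ at inverse temperature $\sqrt2\beta_N$);
\item Claim~\ref{claim:sup-V}, at cost $C^{k_N}=N^{o(1)}$, which is why $k_N\ll\log N$ is needed;
\item the fact that there are $\rho N/M_N\to\infty$ blocks.
\end{itemize}
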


We now show how the super-critical regime in Theorem~\ref{thm:fluctuations} and the upper bounds on the free energy in Theorem~\ref{thm:free-energy} can be deduced from Theorem~\ref{thm:key-theorem}.

\subsubsection{Super-critical regime in Theorem~\ref{thm:fluctuations}}

For any \(\varphi \in \mathcal{C}_{c}^{\infty}\), we may consider the function \(f_N \coloneqq \varphi_N/\|\varphi\|_{L^1(\R^2)}\), and notice that \(W_N^{\beta_N} (\varphi) = \|\varphi\|_{L^1(\R^2)} W_N^{\beta_N}(f_N)\), so we only need to show that \(W_N^{\beta_N}(f_N) \to 0\) in probability. 

The function \(f_N\) is a density on \(\Zeven^2\), whose support is contained in \(B_{R\sqrt{N}}\), for some \(R>0\).
In particular, when \(R\leq 1\), we have that \(f_N\in \mathcal{M}_1(\sqrt{N})\), so by a simple application of Markov's inequality Theorem~\ref{thm:key-theorem} implies that \(W_N^{\beta_N}(f_N) \to 0\) in probability.

When \(R> 1\), we consider \(K\) balls \(B^{(i)} = \{ x\in \Zeven^2, \|x-x_i\| \leq \sqrt{N}\}\) of radius \(\sqrt{N}\) with centers \(x_i\in \Z^2\) such that \(B_{R\sqrt{N}} \subseteq \bigcup_{i=1}^K B^{(i)}\), with \(K\leq C R^2\).
Then, writing \(\alpha_i\coloneqq \sum_{x\in B^{(i)}}f_N(x) \in [0,1]\) and \(f^{(i)} = \alpha_i^{-1} f_N \ind_{B^{(i)}}\) (with by convention \(f^{(i)}=0\) if \(\alpha_i=0\)), we have that \(f_N \leq  \sum_{i=1}^{K} \alpha_i f^{(i)}\).
By translation invariance, we get that \(\EE[W_N^{\beta_N}(f^{(i)}) \wedge 1 ]\leq \sup_{f\in \mathcal{M}_1(\sqrt{N})} \EE[W_N^{\beta_N}(f) \wedge 1 ]\), which goes to \(0\) for all \(1\leq i\leq K\), by Theorem~\ref{thm:key-theorem}.
This shows that \(W_N^{\beta_N}(f^{(i)}) \to 0\) in probability for all \(i\leq K\) and since \(0\leq \alpha_i\leq 1\) and \(i\leq K\), we conclude that \(W_N^{\beta_N}(f_N) \leq \sum_{i=1}^{K} \alpha_i W_N^{\beta_N}(f^{(i)}) \to 0\) in probability.
This concludes the proof of the super-critical regime in Theorem~\ref{thm:fluctuations}.
\qed

\subsubsection{Upper bound on the free energy}

For the upper bound on the free energy, we use the following ``finite-volume'' estimate. 
The proof follows a coarse-graining procedure and is classical: we can in fact directly use \cite[Prop.~2.4]{BCT25}, for which only the independence in time is used.
Let us state here \cite[Prop.~3.5]{BCT25} (the proof is in fact valid in any dimension \(d\geq 2\)).

\begin{proposition}
  \label{prop:coarse-graining}
  There exists a (small) constant \(\eta = \eta(d)\) such that, if there exists \(L \in \N\) and \(\beta>0\) such that 
  \begin{equation}
    \label{eq:finite-volume}
    \sup_{ f \in \mathcal{M}_1(\sqrt{L})} \EE\Big[ W_L^{\beta}(f)^{1/2} \Big] \leq \eta_d \,,
  \end{equation}
  then for all \(N\in \N\), we have
  \[
  \sup_{ f \in \mathcal{M}_1(\sqrt{L})} \EE\Big[ W_N^{\beta}(f)^{1/2} \Big] \leq 3 \, \e^{-N/L} \,.
  \]
\end{proposition}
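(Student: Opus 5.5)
The plan is to run the classical coarse-graining argument of \cite{BCT25}. Only the Markov property of the random walk and the time-independence of the environment are used; spatial correlations are never needed. Cut time into blocks of length \(L\), write \(N = kL + r\), and prove by induction on \(k\) that
\[
u_k \coloneqq \sup_{f\in\mathcal{M}_1(\sqrt L)} \EE\big[W_{kL}^{\beta}(f)^{1/2}\big] \leq 3\,\e^{-k}.
\]
The remainder \(r<L\) is harmless, since \(\EE[W^{1/2}]\le 1\) by Jensen. Moreover \(\e^{-k}\) and \(\e^{-N/L}\) differ by at most a factor \(\e\), which can be absorbed by running the induction with \(\e^{-2k}\) instead, at the cost of a smaller \(\eta\).

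\textbf{Step 1: one block.} By the Markov property at time \(L\),
\[
W_{kL}^{\beta}(f) = \sum_{y} \mu_f(y)\, \widetilde W_{(k-1)L}^{\beta}(y),
\qquad \mu_f(y) \coloneqq \bE_f\Big[\e^{\sum_{n\le L}(\beta\omega_{n,S_n}-\beta^2/2)}\ind_{\{S_L=y\}}\Big].
\]
Here \(\widetilde W\) is built from the environment at times \(>L\), so it is independent of \(\mu_f\), and \(\sum_y\mu_f(y)=W_L^\beta(f)\).

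\textbf{Step 2: partition space.} Split \(\Zeven^2\) into boxes \(B_j\) of side \(\sqrt L\), with centers \(c_j\). Subadditivity \((\sum a_i)^{1/2}\le\sum a_i^{1/2}\) gives
\[
W_{kL}^\beta(f)^{1/2}\le\sum_j \mu_f(B_j)^{1/2}\,\widetilde W_{(k-1)L}^\beta(g_j)^{1/2},
\qquad g_j \coloneqq \frac{\mu_f\ind_{B_j}}{\mu_f(B_j)}.
\]
Conditionally on the first block, \(g_j\) lies, after translation, in \(\mathcal{M}_1(\sqrt L)\). Independence and translation invariance then bound the conditional expectation of each term by \(\mu_f(B_j)^{1/2}u_{k-1}\). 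Hence
\[
u_k\le u_{k-1}\,\sup_f \EE\Big[\sum_j \mu_f(B_j)^{1/2}\Big].
\]

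\textbf{Step 3: bound the one-block factor (main obstacle).} The hard part is to show that \(\sup_f \EE[\sum_j \mu_f(B_j)^{1/2}]\) is at most about \(\e^{-1}\) under \eqref{eq:finite-volume}. I would split the sum into near boxes \(|c_j|\le M\sqrt L\) and far boxes.
\begin{itemize}
\item \emph{Near boxes.} Use \(\mu_f(B_j)\le W_L^\beta(f)\). There are \(O(M^2)\) of them, so they contribute at most \(CM^2\eta\).
\item \emph{Far boxes.} Apply Jensen box by box: \(\EE[\mu_f(B_j)^{1/2}] \le \bP_f(S_L\in B_j)^{1/2}\). The Gaussian tails of the random walk at distance \(\ge M\sqrt L\) give \(\sum_{|c_j|>M\sqrt L}\bP_f(S_L\in B_j)^{1/2}\le \e^{-cM^2}\), summably in \(j\).
\end{itemize}
Choose \(M\) large so that the far part is \(\le \e^{-1}/2\), then \(\eta\) small so that \(CM^2\eta\le \e^{-1}/2\).

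The delicate point is that this crude split must give a factor \(<1\) uniformly in \(f\) and in \(L\). Gaussian tail estimates uniform in \(L\) handle the far boxes. For the near boxes, the contribution \(\sum_{\text{near}} \mu_f(B_j)^{1/2} \le CM\, W_L^\beta(f)^{1/2}\), obtained by Cauchy–Schwarz over boxes, is the form that uses exactly \eqref{eq:finite-volume}. Since the starting measure \(f\) is supported in \(B_{\sqrt L}\), the far-box tail bound is uniform in \(f\). Iterating the recursion from \(u_0=1\) gives \(u_k\le \e^{-k}\), which yields the claim with the constant \(3\) absorbing boundary effects.
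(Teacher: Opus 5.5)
Your proposal is correct and is essentially the classical coarse-graining argument of \cite{BCT25} that the paper cites without reproducing: Markov property at time $L$, a partition into boxes of side $\sqrt{L}$, subadditivity of $x\mapsto x^{1/2}$ combined with time-independence and translation invariance, near boxes controlled by \eqref{eq:finite-volume} and far boxes by Jensen plus Gaussian tails, yielding a one-block factor at most $\e^{-1}$. Note that applying the same step to every $N\ge L$, i.e.\ $v(N)\le \e^{-1}v(N-L)$ with $v(N)\coloneqq\sup_{f\in\mathcal{M}_1(\sqrt L)}\EE[W_N^{\beta}(f)^{1/2}]$ and $v(r)\le 1$ for $r<L$, gives directly $v(N)\le \e^{-\lfloor N/L\rfloor}\le \e\cdot\e^{-N/L}\le 3\,\e^{-N/L}$, so the $\e^{-2k}$ adjustment you mention is unnecessary.
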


\begin{proof}[Proof of the upper bound in Theorem~\ref{thm:free-energy}]
  Let \(\gep>0\).
  For any \(\beta\in (0,1)\), define \(L=L(\beta)\) as follows (we assume that \(L\in \N\) for simplicity of notation):
  \begin{equation}
    \label{def:Leps}
    \begin{split}
    L(\beta) &\coloneqq  \exp\bigg(\Big(  \frac{(1+\epsilon) \mathfrak{C}_h }{\beta}\Big)^{2} \bigg)  \quad  \text{ under \eqref{finite-sum}}\,,\\
    L(\beta) & \coloneqq  \exp\bigg(\Big(  \frac{(1+\epsilon) z_a \mathfrak{C}_a }{\beta}\Big)^{\frac{2}{a+2}} \bigg)  \quad \text{ under \eqref{def:h}}\,,
    \end{split}
  \end{equation}
  which is equivalent to setting \(\beta=\beta(L)\) as 
  \[
  \beta(L) = (1+\gep)  \frac{\mathfrak{C}_h}{ (\log L)^{1/2}} \quad \text{ under \eqref{finite-sum}} \,, \quad
  \beta(L) = (1+\gep)  \frac{z_a\mathfrak{C}_a}{ (\log L)^{(a+2)/2}} \quad \text{ under \eqref{def:h}}\,.
  \] 
  In other words, \(\beta(L)\) follows the scaling of Assumption~\ref{hyp:scaling} with \(\hat \beta = (1+\gep) \hat \beta_c\), \textit{i.e.}\ with super-critical intensity.

  Using that \(\EE[Z^{1/2}]\leq \sqrt{2} \EE[Z\wedge 1]^{1/2}\) for any \(Z\geq 0\) with \(\EE[Z]=1\), see \cite[Lem.~2.2]{BCT25}, applying Theorem~\ref{thm:key-theorem} shows that\(\sup_{ f \in \mathcal{M}_1(\sqrt{L})} \EE[ W_L^{\beta}(f)^{1/2} ] \to 0\) as \(\beta \downarrow 0\), or equivalently as \(L\to \infty\).
  In other words, there exists some \(\beta_{\gep}\) such that for all \(\beta\leq \beta_{\gep}\) the inequality \eqref{eq:finite-volume} holds.
  Thus, Proposition~\ref{prop:coarse-graining} gives that, for any \(N\geq 1\), for any \(\beta\leq \beta_{\eta}\),
  \[
    \EE\big[ (W_N^{\beta})^{1/2} \big] \leq \sup_{ f \in \mathcal{M}_1(\sqrt{L})} \EE\Big[ W_N^{\beta}(f)^{1/2} \Big] \leq 3 \, \e^{-N/L} \,.
  \]
  This bound on the fractional moment then yields the following upper bound on the free energy
  \[
  \tf(\beta) = \lim_{N\to\infty} \frac{2}{N} \bbE\big[ \log (W_N^{\beta})^{1/2} \big] \leq \liminf_{N\to\infty} \frac{2}{N}  \log \bbE\big[(W_N^{\beta})^{1/2} \big] \leq  - \frac{2}{L} \,.
  \]
  In conclusion, in view of the definition~\eqref{def:Leps}, we get that for any \(\gep \in (0,1)\), for all \(\beta\) sufficiently small,
  \[
  \begin{split}
    \tf(\beta) & \leq - 2 \exp\bigg(\Big( -  \frac{(1+\epsilon)  \mathfrak{C}_h }{\beta}\Big)^{2} \bigg) \quad\text{ if \eqref{finite-sum} holds}\,,\\
  \tf(\beta) & \leq - 2 \exp\bigg(\Big( -  \frac{(1+\epsilon) z_a \mathfrak{C}_a }{\beta}\Big)^{\frac{2}{a+2}} \bigg) \quad\text{ if \eqref{def:h} holds}\,.
  \end{split}
  \]
  This concludes the proof of the upper bound in Theorem~\ref{thm:free-energy}.
\end{proof}

\subsection{Sub-critical regime in Theorem~\ref{thm:fluctuations}} 
\label{sec:subcriticalpart}

Applying Chebyshev's inequality and recalling that \(\EE[W_N^{\beta_N}(\varphi)]= \int_{\R^2} \varphi(x) \dd x\), we get that for any $\varepsilon >0$
\begin{equation*}
	\PP \bigg( \, \Big| W_N^{\beta_N}(\varphi) - \int_{\R^2} \varphi(x) \dd x\Big| > \varepsilon \, \bigg) \le  \frac{\Var\big(W_N^{\beta_N}(\varphi)\big)}{\varepsilon^2} \,.
\end{equation*}
It therefore only remains to show that \(\Var(W_N^{\beta_N}(\varphi))\) goes to \(0\) in the subcritical regime \(\hat \beta <\hat \beta_c\), which is given by the following result.

\begin{proposition}\label{prop:boundvariance}
	Suppose that Assumption~\ref{main-assumption} holds and that either~\eqref{finite-sum} or~\eqref{def:h} is satisfied.
  Let \((\beta_N)_{N\geq 1}\) be as in Assumption~\ref{hyp:scaling} with \(\hat\beta<\hat\beta_c\).
	Then, there exist constants \(C= C(\hat\beta)<\infty\) and \(N_0\in\mathbb N\) such that, for every \(N\ge N_0\),
	\begin{equation} \label{eq:boundvariance}
	\Var\big(W_N^{\beta_N}(\varphi)\big)
	\le
	\frac{C}{\log N}\,.
\end{equation}
\end{proposition}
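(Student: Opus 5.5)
We will write the variance as a double sum of covariances and split it according to the distance between starting points. By~\eqref{def:W-phi-2},
\[
\Var\big(W_N^{\beta_N}(\varphi)\big) = \sum_{x,y\in\Zeven^2} \varphi_N(x)\varphi_N(y)\, \mathrm{Cov}\big(W_N^{\beta_N}(x),W_N^{\beta_N}(y)\big),
\]
and by~\eqref{eq:covariances} each covariance equals \(\bE_x\otimes\bE_y[\e^{\beta_N^2\mathcal{L}_N(S,S')}-1]\). We will use two bounds on this quantity. The first is the uniform bound~\eqref{eq:sup-covariances}, which is valid since \(\hat\beta<\hat\beta_c\). The second is a decay bound in \(|x-y|\). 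Here we use \(\e^{u}-1\le u\,\e^{u}\), and we apply Cauchy--Schwarz or a Markov-property decomposition at the first time \(n\) contributing to \(\mathcal{L}_N\). This should give
\[
\mathrm{Cov}\big(W_N^{\beta_N}(x),W_N^{\beta_N}(y)\big) \le C\,\beta_N^2 \sum_{n=1}^N \sum_{z} q_{2n}(z-(x-y))\,h(z) \cdot \sup_{u,v}\EE\big[W_N^{\beta_N}(u)W_N^{\beta_N}(v)\big].
\]
More precisely, the expansion~\eqref{chaos-covariances} is a sum over \(k\ge1\). We factor out the first pair \((x_1,y_1)\) at time \(n_1\): this term contributes \(\beta_N^2 h_N(x_1-y_1)q_{n_1}(x_1-x)q_{n_1}(y_1-y)\), and the remaining sum is bounded by the supremum of the covariances, hence by a constant thanks to~\eqref{eq:sup-covariances}.

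We then insert this bound into the double sum. Since \(\varphi\in\mathcal{C}_c^\infty\), \(\varphi_N\) is bounded by \(\|\varphi\|_\infty\) and its sum is of order \(\|\varphi\|_{L^1}N\). Summing over \(y\) first, and using \(\sum_y q_n(y_1-y)\le 1\) together with the local limit theorem \(\sup_u q_n(u)\le C/n\), gives
\[
\sum_{x,y}\varphi_N(x)\varphi_N(y)\, q_{n}(x_1-x)\,q_{n}(y_1-y) \le C\,\|\varphi\|_\infty^2\,(\text{area } N) \cdot \min\big(1,\tfrac{N}{n}\big)\cdot \tfrac{1}{N}\,,
\]
after normalising by the \(N^{-1}\) weights in \(\varphi_N\). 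The outcome should be
\[
\Var\big(W_N^{\beta_N}(\varphi)\big)\le C\beta_N^2\sum_{n\le N}\frac1N\sum_{|z|\lesssim\sqrt n} h(z)\, ,
\]
up to Gaussian tails in \(z\).

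Under~\eqref{finite-sum}, the inner sum is at most \(\Sigma_h\), so \(\sum_{n\le N}\frac1N\Sigma_h = O(1)\). Together with \(\beta_N^2\asymp(\log N)^{-1}\), this gives \(O(1/\log N)\). Under~\eqref{def:h}, the inner sum \(\sum_{|z|\le\sqrt n}h(z)\) is of order \((\log n)^{a+1}\), so the bound becomes \(\beta_N^2(\log N)^{a+1}\asymp(\log N)^{-(a+2)}(\log N)^{a+1}=(\log N)^{-1}\), as desired.

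The main obstacle is making the first-contact decomposition rigorous in the correlated setting. The factorisation after the first time \(n_1\) requires the remaining covariance sum, started from arbitrary pairs \((x_1,y_1)\) at time \(n_1\) with horizon \(N-n_1\), to be bounded uniformly. This is exactly~\eqref{eq:sup-covariances}, applied for all horizons \(\le N\) at the fixed \(\beta_N\); monotonicity in the horizon makes this legitimate. A second point needing care is the diffusive averaging: the \(\frac1N\) factor must come from the spread of \(\varphi_N\) over \(\asymp N\) sites. To obtain it, we will use \(\sum_x \varphi_N(x)q_n(x_1-x)\le\|\varphi\|_\infty\) for one of the walks. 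For the other walk we will use the total mass \(\sum_y\varphi_N(y)\asymp N\) together with a local-limit bound on the two-point kernel \(q_{2n}\), which gives \(\sup q_{2n}\lesssim 1/n\). We then sum \(\sum_{n\le N} \frac1n\cdot\frac nN\) correctly.
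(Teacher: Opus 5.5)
Your proposal is correct and is essentially the paper's proof: decompose the chaos expansion \eqref{chaos-covariances} at the first time index, bound the remaining covariance uniformly via \eqref{eq:sup-covariances} together with monotonicity in the horizon, and then sum directly, using \eqref{sum:h} under \eqref{def:h}. Two pieces of bookkeeping need tidying. First, the weights satisfy $\varphi_N(x)\le \frac{c}{N}\ind_{\{|x|\le c\sqrt{N}\}}$ with total mass $\int_{\R^2}\varphi(z)\,\dd z$; this is what makes $\EE[W_N^{\beta_N}(\varphi)]=\int_{\R^2}\varphi(z)\,\dd z$, and it replaces your ``bounded by $\|\varphi\|_\infty$ with mass of order $N$''. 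Second, since $|x-y|$ ranges up to order $\sqrt N$, the natural cutoff in $z$ is $\sqrt N$ rather than $\sqrt n$, and the region $|z|>\sqrt N$ is handled simply by $h(z)\le C(\log N)^a/N$, as the paper does.
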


\begin{remark}
  Since this is not our main goal here, we do not derive the sharp asymptotic behavior of \(\Var\big(W_N^{\beta_N}(\varphi)\big)\).
  One should however be able to obtain the value of \(\lim_{N\to\infty} \log N \Var\big(W_N^{\beta_N}(\varphi)\big)\) in the subcritical case \(\hat \beta<\hat \beta_c\).
  We refer to \cite{BCC26} for some further results on the fluctuations of \(\log W_N^{\beta_N}(\varphi)\), where similar computations are carried out.
\end{remark}

\begin{proof}[Proof of Proposition \ref{prop:boundvariance}]
First of all, by definition~\eqref{def:W-phi-2} of \(W_N^{\beta}(\varphi)\), we have that
\begin{equation}
  \label{eq:variance-subcritical-1}
  \Var\big(W_N^{\beta_N}(\varphi)\big) 
  = \sum_{x,y \in \Zeven^2} \varphi_N(x) \varphi_N(y) \Big( \EE \big[ W_N^{\beta_N}(x)W_N^{\beta_N}(y) \big] -1\Big) \,.
\end{equation}
Now, using the chaos decomposition~\eqref{chaos-covariances} and decomposing over the first time-index \(n_1\), we get 
\[
  \EE \big[ W_N^{\beta_N}(x)W_N^{\beta_N}(y) \big] = 1+ \beta_N^2 \sum_{n=1}^{N}\sum_{x',y'\in \Z^2}  h_N(x'-y') q_n(x'-x)  q_n(y'-y)  \EE \big[ W_{N-n}^{\beta_N}(x')W_{N-n}^{\beta_N}(y') \big]\,.
\]
Now, using the monotonicity in \(N\) of the covariances (which is obvious from~\eqref{chaos-covariances}), we have that 
\[
\sup_{x',y'\in \Z^2}  \EE \big[ W_{N-n}^{\beta_N}(x')W_{N-n}^{\beta_N}(y') \big] \leq \sup_{x',y'\in \Z^2}  \EE \big[ W_{N}^{\beta_N}(x')W_{N}^{\beta_N}(y') \big]  <+\infty \,,
\]
where the last inequality uses~\eqref{eq:sup-covariances} (recall that \(\hat\beta<\hat\beta_c\)).

Going back to~\eqref{eq:variance-subcritical-1} and using that \(\varphi\) has compact support so that \(\varphi_N(x) \leq \frac{c}{N} \ind_{\{|x|\leq c\sqrt{N}\}}\) for some constant \(c>0\), we finally obtain (recall also that \(h_N(z)\leq C h(z)\))
\begin{equation*}
		\Var\big(W_N^{\beta_N}(\varphi)\big) \le \frac{C c^2 \, \beta_N^2}{N^2} \sum_{|x|,|y| \le c\sqrt{N}} \sum_{n=1}^N \sum_{x',y'\in\Z^2}h(x'-y') q_{n}(x'-x) q_n(y'-y)  \,,
\end{equation*}
and let us recall that because of parity issues we have that \(x'-y' \in \Zeven^2\).
In particular, if we change variables to write \(z=x'-y'\) the sums over \(z\) will be restricted to \(\Zeven^2\) (omitting this restriction only leads to upper bounds).

\smallskip
\textbullet\
In the summable case \eqref{finite-sum}, we simply bound \(\sum_{|x| \leq c\sqrt{N}} q_{n}(x'-x)\) by \(1\), then we sum \(h(x'-y')\) over \(x'\), then \(q_n(y'-y)\) over \(y'\), to get that
\begin{equation}\label{eq:boundsumm}
		\Var\big(W_N^{\beta_N}(\varphi)\big)  \leq \frac{C' \, \beta_N^2}{N^2} \sum_{n=1}^N  \sum_{|y|\leq c\sqrt{N}} \Big(\sum_{z\in \Z^2}  h(z) \Big)  \le \frac{C''}{\log N}\,,
\end{equation}
recalling that $\beta_N^2 \sim C (\log N)^{-1}$, see~\eqref{def:beta-N-summable}.

\smallskip
\textbullet\
In the non--summable case \eqref{def:h}, we split the sum according to whether $|x'-y'|\le \sqrt{N}$ or not.
In the case \(|x'-y'|\leq \sqrt{N}\) we bound \(\sum_{|x| \leq c\sqrt{N}} q_{n}(x'-x)\) by \(1\) then we sum \(h(x'-y')\) over \(x'\) (with the constraint \(|x'-y'|\leq \sqrt{N}\)) then we sum \(q_n(y'-y)\) over \(y'\).
Similarly as above, we get that the contribution of the sum with \(|x'-y'| \leq \sqrt{N}\) is bounded by a constant times
\[
\frac{\beta_N^2}{N^2} \sum_{n=1}^N  \sum_{|y|\leq c\sqrt{N}} \Big(\sum_{|z|\leq \sqrt{N}}  h(z) \Big)\leq \frac{C''}{\log N} \,,
\]
where we have used that $\beta_N^2 \sim C_{\hat\beta} (\log N)^{-(a+2)}$, see~\eqref{def:beta-N}, and that \(\sum_{|z| \le \sqrt{N}} h(z) \sim C' (\log N)^{a+1}\), see~\eqref{sum:h}.

For the remaining term, we use that \(h(x'-y') \leq C (\log N)^a N^{-1}\) for all \(|x'-y'| > \sqrt{N}\). 
Then we simply bound the sum \(\sum_{x',|x'-y'|> \sqrt{N}} q_{n}(x'-x)\) by \(1\) and then we sum \(q_n(y'-y)\) over~\(y'\).
Thus, the contribution of the sum with \(|x'-y'| > \sqrt{N}\) is bounded by a constant times
\[
\beta_N^2 \sum_{n=1}^N \sum_{|x|,|y| \le c\sqrt{N}} \frac{(\log N)^a}{N} \le \frac{C'}{ (\log N)^2} \,,
\]
where we have again used that $\beta_N^2 \sim C (\log N)^{-(a+2)}$, see~\eqref{def:beta-N}.
Combining the last two estimates conclude the proof in the non-summable case~\eqref{def:h}.
\end{proof}

\subsection{Lower bound on the free energy}
\label{sec:lower-bound}

To prove the lower bound on the free energy, we use the super-additivity of \((\EE\log W_N^{\beta})_{N\geq 0}\), together with a ``subcritical'' estimate of \(\EE\log W_N^{\beta}\), see Lemma~\ref{lem:LWexplog} below, which is in practice based on a second moment estimate. 

For any fixed $\beta >0$, the sub-additivity of  \((\EE\log W_N^{\beta})_{N\geq 0}\) gives
\begin{equation}
  \label{eq:lwF}
	\tf(\beta) = \lim_{N\to\infty} \frac{1}{N} \bbE\big[ \log W_N^{\beta} \big] = \sup_{N \in \N}\frac{1}{N} \bbE\big[ \log W_N^{\beta} \big] \,.
\end{equation}
We now choose $\bar N=\bar N(\beta)$ as large as possible while remaining in the subcritical regime: our aim is to keep $\EE[ \log W_{\bar N}^{\beta}]$ uniformly bounded from below while the prefactor $1/ \bar N$ is smallest possible. 
To this end, in analogy with~\eqref{def:Leps}, fix $\varepsilon >0$ and set \(\bar N=\bar N(\beta)\) as follows (we assume that~\(\bar N\) is an integer for simplicity):
\begin{equation}
  \label{def:barNeps}
  \begin{split}
  \bar N(\beta) &\coloneqq  \exp\bigg(\Big(  \frac{(1-\epsilon) \mathfrak{C}_h }{\beta}\Big)^{2} \bigg)  \quad  \text{ under \eqref{finite-sum}}\,,\\
  \bar N(\beta) & \coloneqq  \exp\bigg(\Big(  \frac{(1-\epsilon) z_a \mathfrak{C}_a }{\beta}\Big)^{\frac{2}{a+2}} \bigg)  \quad \text{ under \eqref{def:h}}\,.
  \end{split}
\end{equation}
Notice that we can again invert this relation and write \(\beta\) as a function of \(\bar N\) (so that choosing \(\beta\) small is equivalent to choosing \(\bar N\) large):
\begin{equation}
	\label{eq:betaNbar}
  \beta(\bar N)=(1-\varepsilon) \frac{ \mathfrak{C}_h }{\sqrt{\log N}} \quad \text{ under \eqref{finite-sum}}\,,
  \quad 
	\beta(\bar N)=(1-\varepsilon) \frac{ z_a \mathfrak{C}_a }{(\log N)^{\frac{a+2}{2}}} \quad \text{ under \eqref{def:h}}\,.
\end{equation}
In other words, \(\beta(\bar N)\) satisfies the scaling of Assumption~\ref{hyp:scaling} with subcritical intensity \(\hat \beta = (1-\gep) \hat \beta_c\).

We now claim the following.
\begin{lemma}\label{lem:LWexplog}
  Assume that \((\beta_N)_{N\geq 1}\) satisfies the scaling of Assumption~\ref{hyp:scaling} with \(\hat \beta < \hat \beta_c\).
  Then, there exists a constant $C_{\hat\beta}>0$ such that for all \(N\geq 1\)
	\begin{equation*}
    \label{eq:Ceps}
		\EE [ \log W_{N}^{\beta_N}] \ge -C_{\hat \beta}\,.
	\end{equation*}
\end{lemma}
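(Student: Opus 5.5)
We need a lower bound on $\EE\log W_N^{\beta_N}$ that is uniform in $N$, in the subcritical regime. The second moment alone gives $\PP(W_N \ge 1/2) \ge c>0$ by Paley--Zygmund. That controls where $W_N$ sits with positive probability, but not the lower tail, which is what $\EE\log$ needs. The plan is to combine Paley--Zygmund with a Gaussian concentration inequality for $\log W_N$ whose Lipschitz constant is controlled by the overlap under the polymer measure. This is the standard route (cf.\ Carmona--Hu, or the concentration argument in \cite{CSZ17b}).

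\emph{Step 1 (Paley--Zygmund).} Since $\EE[W_N^{\beta_N}]=1$, and by Theorem~\ref{thm:second-moment} (or \eqref{eq:sup-covariances}) $\sup_N \EE[(W_N^{\beta_N})^2]\le M_{\hat\beta}<\infty$, we get
\[
\PP\big(W_N^{\beta_N}\ge \tfrac12\big)\ge \frac{1}{4M_{\hat\beta}} =: p>0 .
\]

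\emph{Step 2 (Lipschitz control).} Write $\omega$ through the i.i.d.\ field $\hat\omega$ via \eqref{omega-hat-omega}, and view $F(\hat\omega)=\log W_N^{\beta_N}$. Its gradient is
\[
\partial_{\hat\omega(n,y)}F=\beta_N\,\bP_N^{\beta_N}\big[h_0(S_n-y)\big],
\]
so
\[
|\nabla F|^2=\beta_N^2\,(\bP_N^{\beta_N})^{\otimes2}\big[\mathcal L_N(S,S')\big],
\]
using $\sum_y h_0(x-y)h_0(x'-y)=h(x-x')$. We need this to be small on a set of probability close to $1$. First consider the event $A=\{\beta_N^2(\bP_N^{\beta_N})^{\otimes 2}[\mathcal L_N]\le K\}$. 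On $\{W_N\ge 1/2\}$ we have
\[
\beta_N^2\,(\bP_N)^{\otimes2}[\mathcal L_N]\le 4\,\beta_N^2\,\bE^{\otimes2}\big[\mathcal L_N\, \e^{H(S)+H(S')}\big],
\]
where the right-hand side has expectation $4\beta_N^2\bE^{\otimes 2}[\mathcal L_N \e^{\beta_N^2\mathcal L_N}]$. This is bounded uniformly in $N$ for $\hat\beta<\hat\beta_c$: apply the second moment bound at a slightly larger $\hat\beta'\in(\hat\beta,\hat\beta_c)$ and use $x\e^{\beta^2x}\le C\e^{\beta'^2x}/(\beta'^2-\beta^2)$, so that $\beta_N^2\mathcal L_N\e^{\beta_N^2\mathcal L_N}\le C\e^{\beta_N'^2\mathcal L_N}$. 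Hence by Markov, $\PP(\{W_N\ge1/2\}\cap A)\ge p/2$ for $K$ large.

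\emph{Step 3 (concentration).} Use the standard Talagrand-type argument for Gaussian measures. Let $G=\{W_N\ge1/2\}\cap A$, with $\PP(G)\ge p/2$. The function
\[
\hat\omega\mapsto d(\hat\omega,G')
\]
(where $G'$ is the convex-hull/appropriate version) is used together with the fact that $F$ is $\sqrt K$-Lipschitz along segments in a convex set where $|\nabla F|^2\le K$. The needed convexity holds because $\hat\omega\mapsto \beta_N^2(\bP_N)^{\otimes 2}[\mathcal L_N]$ can be replaced by the convex functional $\log W_N$ itself. Concretely, I would follow \cite[proof of Prop.~3.1]{CSZ17b} or Carmona--Hu's argument. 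Since $F$ is convex in $\hat\omega$, it gives
\[
F(\hat\omega)\ge \log\tfrac12 - \sqrt K\, d(\hat\omega,G).
\]
Gaussian isoperimetry then yields $\PP(F\le -\log 2-\sqrt K(t+c_p))\le \e^{-t^2/2}$.

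Integrating the tail gives $\EE[\log W_N^{\beta_N}]\ge -C_{\hat\beta}$, because $\log W_N$ has finite positive part expectation trivially ($\EE\log W\le 0$ anyway; only the negative part matters).

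The main obstacle is Step 3: the convexity/Lipschitz interplay. Convexity of $F$ gives the one-sided bound $F(\hat\omega')\ge F(\hat\omega)+\langle\nabla F(\hat\omega),\hat\omega'-\hat\omega\rangle\ge F(\hat\omega)-|\nabla F(\hat\omega)|\,|\hat\omega'-\hat\omega|$, evaluated at a point $\hat\omega\in G$. This is exactly what is needed, so the step should go through.
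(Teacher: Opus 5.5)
Your proposal is correct and follows essentially the same route as the paper. Paley--Zygmund from the uniform second-moment bound gives $\PP(W_N^{\beta_N}\ge \frac12)\ge p>0$. Markov's inequality, together with the second moment at a slightly larger $\hat\beta'<\hat\beta_c$, then controls $|\nabla \log W_N^{\beta_N}|^2$ on $\{W_N^{\beta_N}\ge \frac12\}$. The conclusion comes from a concentration inequality for convex functions of Gaussians whose gradient is bounded on a set of positive probability.

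The only difference is in that last step. You re-derive the concentration inequality yourself, via the tangent-plane bound at points of $G$ and Gaussian isoperimetry; no convex hull of $G$ is needed there, so that part of your Step 3 can be dropped. The paper instead cites it, as \cite[Prop.~1.6]{Led05} in the form of \cite[Prop.~3.4]{CTT17}.
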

Applying Lemma~\ref{lem:LWexplog} in \eqref{eq:lwF}, we obtain 
\begin{equation*}
	\tf(\beta) \geq \frac{1}{\bar N} \EE [ \log W_{\bar N}^{\beta}]  \ge -C_{(1-\gep)\hat\beta_c} 
  \begin{cases}
    \exp\Big(-\big(  \frac{(1-\epsilon) \mathfrak{C}_h }{\beta}\big)^{2} \Big) & \ \text{ under~\eqref{finite-sum}},\\
    \exp\Big(-\big(  \frac{(1-\epsilon) z_a \mathfrak{C}_a }{\beta}\big)^{\frac{2}{a+2}} \Big) & \ \text{ under~\eqref{def:h}}.
  \end{cases}
\end{equation*}
Since \(\gep >0\) is arbitrary, this gives the desired lower bound on the free energy.
\qed

\begin{proof}[Proof of Lemma \ref{lem:LWexplog}]
	We follow the lines of \cite[\S4]{BL17} and rely on the concentration inequality from \cite[Prop.~1.6]{Led05} in order to estimate the left tail of \(\log W_N^{\beta_N}\) only thanks to second moment estimates.
	Since only the Gaussian case is relevant for our purposes, we state the result in this setting only, with the formulation given in \cite[Prop.~3.4]{CTT17}.
  \begin{proposition}
    \label{prop:CSZ20}
    Assume $\hat \omega=(\hat \omega_i)_{i \in \N}$ is a sequence of i.i.d.\ standard Gaussian random variables. There exist constants $c_1,c_2\in (0,\infty)$ such that, for any $n \in \N$ and for any differentiable and convex function $f : \R^n \to \R$, the following bound holds for any $b \in \R$ and $t,c \in (0,\infty)$:
    \begin{equation}
      \label{eq:concineq}
      \PP \big(\, f(\hat \omega) \le b-t \,\big)\PP \big(\, f(\hat \omega) \ge b \,, \ |\nabla f(\hat \omega)| \le c\, \big) \le c_1 \exp \Big\lbrace - \frac{(t/c)^2}{c_2} \Big\rbrace\,,
    \end{equation}
    where $\hat \omega=(\hat \omega_1,\ldots,\hat \omega_n)$ and $|\nabla f(\hat \omega)|=\sqrt{\sum_{i=1}^n (\frac{\partial}{\partial \hat \omega_i} f)^2}$ is the norm of the gradient. 
  \end{proposition}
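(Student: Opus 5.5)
The plan is to turn the gradient condition into a \emph{distance} condition via convexity, and then apply a dimension-free Gaussian concentration inequality for distances to a set. Fix \(b\in\R\) and \(t,c>0\), and set
\[
A\coloneqq\{y\in\R^n:\ f(y)\le b-t\}\,,\qquad B\coloneqq\{x\in\R^n:\ f(x)\ge b,\ |\nabla f(x)|\le c\}\,.
\]

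\emph{Step 1 (geometric separation).} Let \(x\in B\). Since \(f\) is convex and differentiable, for every \(y\in\R^n\) we have \(f(y)\ge f(x)+\nabla f(x)\cdot(y-x)\). By Cauchy--Schwarz this gives \(f(y)\ge b-c\,|y-x|\). If moreover \(y\in A\), then \(b-t\ge b-c|y-x|\), that is \(|y-x|\ge t/c\). Hence every point of \(B\) satisfies \(d(x,A)\ge t/c\), where \(d\) is the Euclidean distance, and so
\[
B\subseteq\{d(\cdot,A)\ge t/c\}\,.
\]
This is the only place where convexity is used.

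\emph{Step 2 (Gaussian concentration for distances).} For the standard Gaussian measure \(\gamma_n\) on \(\R^n\), I would invoke Maurey's property \((\tau)\) in the form of the infimum-convolution inequality, specialised to indicator functions:
\[
\int_{\R^n} \e^{d(x,A)^2/4}\,\gamma_n(\dd x)\;\le\;\frac{1}{\gamma_n(A)}\,.
\]
This bound is dimension-free. Alternatively, the Gaussian isoperimetric inequality (Borell, Sudakov--Tsirelson) gives \(\gamma_n(A)\,\gamma_n(d(\cdot,A)\ge r)\le c_1\e^{-r^2/c_2}\) directly. Applying Markov's inequality to the displayed bound yields
\[
\gamma_n\big(d(\cdot,A)\ge r\big)\le \frac{\e^{-r^2/4}}{\gamma_n(A)}\,.
\]

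\emph{Step 3 (conclusion).} Take \(r=t/c\) and combine with Step 1 to get
\[
\PP(f(\hat\omega)\le b-t)\,\PP\big(f(\hat\omega)\ge b,\ |\nabla f(\hat\omega)|\le c\big)\le \gamma_n(A)\,\gamma_n\big(d(\cdot,A)\ge t/c\big)\le \e^{-(t/c)^2/4}\,.
\]
This is \eqref{eq:concineq} with \(c_1=1\) and \(c_2=4\), uniformly in \(n\).

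The main obstacle is Step 2: the constants must not depend on \(n\). Step 1 is elementary. For Step 2 I would quote property \((\tau)\) for \(\gamma_n\). One route is to prove it for \(n=1\), via a Prékopa--Leindler argument for the log-concave Gaussian density, and then tensorise, since \(\e^{d(x,A)^2/4}\) for the Euclidean distance is compatible with product structure. Otherwise I would fall back on Gaussian isoperimetry, which gives the same form with some other universal constants \(c_1,c_2\).
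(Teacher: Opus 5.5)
Your proof is correct and follows the standard route. The paper does not prove Proposition~\ref{prop:CSZ20} itself; it quotes it from \cite[Prop.~1.6]{Led05} in the formulation of \cite[Prop.~3.4]{CTT17}, and the argument there is the one you give: convexity places \(\{f\ge b,\ |\nabla f|\le c\}\) at distance at least \(t/c\) from the sublevel set \(A=\{f\le b-t\}\), and a concentration bound for \(d(\cdot,A)\) finishes. As I understand the cited general formulation (I have not checked its text), the last step uses concentration of convex \(1\)-Lipschitz functions around their median, applied to \(d(\cdot,A)\), which is convex because \(A\) is; this is what lets it cover bounded non-Gaussian disorder. Your Gaussian-specific use of Maurey's \((\tau)\) inequality instead gives the explicit constants \(c_1=1\), \(c_2=4\), and Prékopa--Leindler yields \((\tau)\) directly in \(\R^n\), so no tensorisation is needed.
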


We shall apply Proposition \ref{prop:CSZ20} with $f(\hat \omega)=f_{N}(\hat \omega)=\log W_{N}^{\beta_N}$.
However, to apply Proposition~\ref{prop:CSZ20}, we first express $\log W_{N}^{\beta_N}$ as a function of i.i.d.\ Gaussian random variables \(\hat\omega(n,y)\), recalling the expression~\eqref{omega-hat-omega}.
A straightforward calculation (details are provided in Appendix~\ref{app-gradient}) gives the following:
\begin{equation}
  \label{eq:nabla-f}
		|\nabla f_N(\hat \omega)|^2 = \frac{\beta_N^2}{(W_N^{\beta_N})^2} \bE^{\otimes 2} \bigg[ \,  \mathcal{L}_{N}(S,S')\, \e^{\sum_{m=1}^N \big( \beta_N \{\omega(m,S_m)+\omega(m,S'_m) \}- \beta_N^2\big)}\,\bigg]\,,
\end{equation}
where \(\bE^{\otimes 2}\) denotes the expectation with respect to to independent copies $S,S'$ of the random walk and where $\mathcal{L}_N(S,S') \coloneqq\sum_{n=1}^N h(S_n-S_n')$ as in~\eqref{eq:covariances}.

Now, let us show that \(\PP( f_N(\hat \omega) \ge - \log 2 ; |\nabla f_N(\hat \omega)| \le c)\) is bounded away from~\(0\) for some well chosen constant \(c>0\).
First of all, notice that since \(\EE[W_N^{\beta_N}]=1\), we have by Paley--Zygmund
\begin{equation}
  \label{eq:PZineq}
    \PP \big( \, f_N(\hat \omega) \ge - \log 2\, \big) = \PP \Big( \, W_N^{\beta_N} \ge \frac{1}{2}\EE [W_N^{\beta_N}]\, \Big)\ge \frac{\EE [W_N^{\beta_N}]^2}{4 \EE[(W_N^{\beta_N})^2]} \ge \frac{1}{4C_{\hat \beta}}\,,
\end{equation}
where we have used that \(\sup_{N\geq 1}\EE[(W_N^{\beta_N})^2] <+\infty\) thanks to~\eqref{eq:sup-covariances} (recall \(\hat\beta<\hat\beta_c\)).

On the other hand, on the event that $f_N(\hat \omega)\ge -\log 2$, \textit{i.e.}\ $W_N^{\beta_N} \ge \frac{1}{2}$, using~\eqref{eq:nabla-f}, we have
\begin{equation*}
    \EE \big[ |\nabla f_N(\hat \omega)|^2 \ind_{\{f_N(\hat \omega)  \ge -\log 2 \}} \big] \le 4 \EE \bigg[ \bE^{\otimes 2} \Big[  \beta_N^2 \mathcal{L}_{N}(S,S')\, \e^{\sum_{m=1}^N \big( \beta_N \{\omega(m,S_m)+\omega(m,S'_m) \}- \beta_N^2\big)}  \Big]\bigg] \,.
\end{equation*}
Then, using the identity $\EE \big[   \e^{\beta \sum_{m=1}^N\{ \omega(m,S_m)+\omega(m,S'_m) \}- N \beta^2}  \big]=\e^{\beta^2 \mathcal{L}_N(S,S')}$ following from \cite[Lem.~2.1]{CCD25}, we get that for any fixed \(\delta>0\) the above is bounded by 
\begin{equation*}
    4\bE^{\otimes 2} \Big[ \beta_N^2  \mathcal{L}_{N}(S,S')\, \e^{\beta_N^2 \mathcal{L}_{N}(S,S') }\Big] \le \frac{4}{\delta(2+\delta)}\, \bE^{\otimes 2} \Big[   \e^{(1+\delta)^2 \beta_N^2 \mathcal{L}_{N}(S,S') }\Big] \leq \frac{2}{\delta} \EE\big[ (W_N^{(1+\delta) \beta_N})^2\big] \,,
\end{equation*}
using also the inequality $x \le \frac{1}{u} \e^{u x}$ valid for any \(x\geq 0\) and \(u>0\) and then~\eqref{eq:covariances}.
Then, if we have fixed \(\delta\) small enough so that \(\hat\beta' :=(1+\delta) \hat \beta <\hat \beta_c\) we get that \(\sup_{N\geq 1} \EE[ (W_N^{(1+\delta) \beta_N})^2] \leq C_{\hat\beta'}\) thanks to~\eqref{eq:sup-covariances}.
All together, using Markov's inequality we get
\begin{equation}
  \label{eq:bound-gradient}
  \PP\big(  f_N(\hat \omega) \ge - \log 2 \, ; \, |\nabla f_N(\hat \omega)| > c \big) \le \frac{1}{c^2} \EE \big[ | \nabla f_N(\hat \omega)|^2 \ind_{\{f_N(\hat \omega)  \ge -\log 2 \}} \big] \leq \frac{1}{c^2} \frac{2}{\delta} C_{\hat\beta'} \,.
\end{equation}

We now choose \(c= c_{\hat\beta}\coloneqq (16 C_{\hat\beta} C_{\hat\beta'} /\delta)^{1/2}\) so that combining~\eqref{eq:PZineq}-\eqref{eq:bound-gradient}, we end up with 
\begin{equation}
  \label{eq:claimtheta}
  \PP\big( f_N(\hat \omega) \ge - \log 2 \, ; \, |\nabla f_N(\hat \omega)| \le c_{\hat\beta} \big) \geq \frac{1}{4 C_{\hat\beta}}- \frac{1}{c_{\hat\beta}^2} \frac{2}{\delta} C_{\hat\beta'} = \frac{1}{8 C_{\hat\beta}} \,.
\end{equation}

We are now ready to conclude the proof. By Proposition~\ref{prop:CSZ20}, \eqref{eq:claimtheta} gives the following estimate on the left-tail of \(f_N(\hat \omega) = \log W_{N}^{\beta_N}\):
\[
\PP\big(  \log W_{N}^{\beta_N} \le - \log 2 -t  \big) \le C_{\hat\beta}' \, \e^{ - c_{\hat\beta}' t^2}  \,,
\]
for some constants $C_{\hat\beta}', c_{\hat\beta}'>0$ (that do not depend on \(N\)).
From this, we deduce that
\begin{equation*}
	\EE \big[ \log W_{N}^{\beta_N} \big] 
  \ge -\log 2 - \int_0^\infty \PP\big(  \log W_{N}^{\beta_N} \le - \log 2 -t  \big) \dd t 
  \ge -\log 2 - C_{\hat\beta}' \int_0^\infty  \e^{ - c_{\hat\beta}' t^2}\eqqcolon -C_{\hat\beta}''\,,
\end{equation*}
which completes the proof of Lemma~\ref{lem:LWexplog}.
\end{proof}

\section{Strategy and main steps of the proof of Theorem~\ref{thm:key-theorem}}
\label{sec:key-proof}

We now turn to the proof of our central result, Theorem~\ref{thm:key-theorem}.
In this section, we explain the main steps of the proof, and highlight the main technical estimates we need to obtain.
The proof follows a by-now standard change of measure argument, whose delicate point is always to find a well-suited event on the environment \(\omega\).
We present here a general, streamlined, method to build such an event, based on a functional constructed as a coarse-grained chaos decomposition inspired by~\cite{BCT25,BCZ26}.
The main interest of our approach is that \emph{it only relies on second moment estimates} and is flexible enough to treat the case of long-range spatial correlations (and most likely other models) where second moment calculations have been carried out in~\cite{CCD25}.
Note that~\cite{BCT25} also uses some (tedious) third moment computation, but it aims at showing a much sharper result (namely the super-critical regime in Theorem~\ref{thm:SHF}): we use here hypercontractivity to reduce to a second moment estimate.

\subsection{The change of measure argument}

As a preliminary step, we start by using a change of scale for the starting point: using \cite[Prop.~4.5]{BCT25}, we have that for any \(\rho\in (0,1)\),
\begin{equation*}
  \sup_{ f \in \mathcal{M}_1(\sqrt{N})} \EE\big[ W_N^{\beta_N}(f)\wedge 1 \big] \leq 8 \sup_{f\in \mathcal{M}_1(\sqrt{\rho N})} \EE\Big[ W_N^{\beta_N}(f)\wedge \frac{1}{\rho} \Big] \,.
\end{equation*}
Loosely speaking, the idea behind this change of scale is to make \(W_N^{\beta_N}(f)\) look like a ``coarse-grained point-to-plane partition function'', for which the change of measure will be more easily applied.
(We eventually take \(\rho = (\log N)^{-1}\), but this precise value is immaterial.)

Now, observe that for any density \(f\) on \(\Z^2\) we have that \(W_N^{\beta_N}(f)\) is a non-negative mean~\(1\) random variable. 
We can thus us introduce the so-called size-biased measure:
\begin{equation}
  \label{def:size-biased}
  \forall\, A\in \mathcal{F}_N\,, \qquad 
  \tPP_f(A) = \tilde{\PP}_{N,f}^{\beta_N}(A) \coloneqq  \EE\big[ W_N^{\beta_N}(f) \ind_A \big] \,.
\end{equation}

\begin{remark}
  There is some interpretation of the size-biased probability measure \(\tPP_f\), which will not be useful for us but is worth mentioning. 
  Using Fubini's theorem, we get that 
  \[
  \tPP_f(A) = \bE_f\Big[ \EE\Big[ \e^{\sum_{n=1}^N (\beta_N \omega_{n,S_N} -\frac12 \beta_N^2 ) } \ind_A \Big] \Big] = \bE_f\big[ \hat{\PP}^{(S)}(A)\Big]
  \]
  where \(\bP_{f}\coloneqq \sum_{x\in \Z^d} f(x) \bP_x\) and \(\dd \hat \PP_N^{(S)} (\omega) \coloneqq \e^{\sum_{n=1}^N (\beta_N \omega_{n,S_N} -\frac12 \beta_N^2 ) } \dd \PP \) for any fixed trajectory~\(S\).
  Note that, by Assumption~\ref{hyp:scaling}, \(\hat\PP_N^{(S)}\) is the law of a Gaussian field with the same covariances as under \(\PP\) but with mean \(\beta_N h(x-S_n)\).
  In other words, under \(\tPP_f\), the environment (up to time~\(N\)) is constructed by drawing a random walk trajectory \((S_n)_{1\leq n \leq N}\) under \(\bP_f\) and then adding \((\beta_N h(x-S_n))_{1\leq n \leq N, x\in \Z^2}\) to the original environment \(\omega\). 
\end{remark}

With this notation, for any event \(A_N \in \mathcal{F}_N\) (that might depend on \(f\)), we have that
\begin{equation}
  \label{eq:change-of-measure}
  \EE\Big[ W_N^{\beta_N}(f)\wedge \frac{1}{\rho} \Big] \leq \EE\Big[ \frac{1}{\rho} \ind_{A_N} \Big] +  \EE\Big[ W_N^{\beta_N}(f) \ind_{A_N^{c}} \Big] = \frac{1}{\rho}  \PP(A_N) + \tPP_f(A_N^c) \,.
\end{equation}
This is what is known as a change of measure argument: one is left with finding some event \(A_N\) which is atypical under the initial measure \(\PP\), but is typical under the size-biased measure \(\tPP_f\).

Let us mention that in~\eqref{eq:change-of-measure}, the inequality is an equality for the event \(A_N = \{W_N^{\beta_N}(f) > \frac1\rho\}\), but in that case estimating \(\PP(A_N)\) is equivalent to the initial problem.
The idea is thus to find some statistic \(X_N(f)\) to serve as a proxy for \(W_N^{\beta_N}(f)\), and to construct an event \(A_N=A_N(f)\) based on this \(X_N(f)\).
More precisely, \(X_N(f)\) will be some random variable with \(\EE[X_N(f)]=0\) and \(\tEE_f[X_N(f)] >0\), and we define 
\[
A_N(f) = \Big\{ X_N(f) > \frac12 \tEE[X_N(f)] \Big\} \,.
\]
Thus, Chebychev's inequality gives that 
\begin{equation*}
  \PP(A_N(f)) \leq 4 \frac{\Var(X_N(f))}{\tEE_f[X_N(f)]^2} \,,
  \qquad
  \tPP_f(A_N(f)^c) \leq 4 \frac{\tVV_f(X_N(f))}{\tEE_{f}[X_N(f)]^2} \,,
\end{equation*}
so that, combined with~\eqref{eq:change-of-measure}, we end up with
\begin{equation}
  \label{eq:change-of-measure-2}
  \EE\Big[ W_N^{\beta_N}(f)\wedge \frac{1}{\rho} \Big] \leq 32\, \frac{\rho^{-1} \Var(X_N(f)) }{\tEE_{f}[X_N(f)]^2}   +  32\, \frac{\tVV_f(X_N(f))}{\tEE_{f}[X_N(f)]^2}   \,.
\end{equation}
It thus remains to define \(X_N(f)\) and obtain a lower bound on \(\tEE_{f}[X_N(f)]\) and upper bounds on \(\Var(X_N)\), \(\tVV_f(X_N(f))\) so that we can bound~\eqref{eq:change-of-measure-2} uniformly for \(f\in \mathcal{M}_1(\sqrt{N})\).

\subsection{Choice of the proxy \texorpdfstring{\(X_N(f)\)}{} for \texorpdfstring{\(W_N^{\beta_N}(f)\)}{}}

We now define our proxy for \(W_N^{\beta_N}(f)\) by truncating the chaos expansion~\eqref{def:chaos-expansion} in several ways: (i)~we only keep subsets \(A\) with \(|A|\leq 1+k_N\) for \(k_N \coloneqq (\log \log N)^2\) (in fact, any choice \(\log \log N \ll k_N \ll \log N\) would do the trick); (ii)~we only keep subsets \(A\) of width smaller than some \(M = M_N\ll N\) (see~\eqref{def:MN} below for its definition).
This is inspired by~\cite{BCT25}, and we refer to its Section~4.3 for more comments as to why this is a natural proxy.
More precisely, we introduce
\begin{equation}
  \label{def:proxy}
  X_N(f) = \sum_{n =\rho N}^{\frac12 N} \sum_{x\in \Z^2}  X_{n,x}(f) \,,
\qquad
\text{with}
\quad
X_{n,x}(f) \coloneqq  \sum_{A \in \mathcal{I}_{n,x}} q^{(f)}(A) \xi(A) \,,
\end{equation}
with \(\mathcal{I}_{n,x} =\mathcal{I}_{n,x}^{(N)} \) the set of (non-empty) subsets \(A\) which start at \((n,x)\), with \(|A|\leq k_N\) and of width smaller than \(M_N\), namely
\[
\mathcal{I}_{n,x} \coloneqq \Big\{ A \subseteq \llb n,n+M_N \rrb \times \mathbb{Z}^2 ,\, \mathrm{start}(A) = (n,x) \,, 1\leq |A| \leq 1+k_N \Big\} \,,
\]
where we have denoted \(\mathrm{start}(A) = (n_1,x_1)\) if \(A= \{(n_1,x_1), \ldots, (n_k,x_k)\}\) with \(n_1<\ldots<n_k\).

Here, \(M=M_N\) is defined as follows: let \(\hat\beta' \coloneqq \frac{\hat \beta_c+\hat\beta}{2} \in (\hat\beta_c,\hat\beta)\) and set 
\begin{equation}
  \label{def:MN}
  M_N \coloneqq N^{\gamma} \qquad \text{ with }
  \gamma \coloneqq
  \begin{cases}
    (\hat\beta'/\hat\beta)^2 & \  \text{ under~\eqref{finite-sum}},\\
    (\hat\beta'/\hat\beta)^{2/(a+2)}& \  \text{ under \eqref{def:h}},
  \end{cases}
  \quad \gamma \in (0,1)\,.
\end{equation}
This way, recalling Assumption~\ref{hyp:scaling}, we have that 
\begin{equation}
  \label{eq:beta-MN}
  \beta_N = \hat\beta'\frac{\mathfrak{C}_h}{\sqrt{\log M_N}} \quad \text{ under~\eqref{finite-sum}}\,, 
  \qquad 
  \beta_N = \hat\beta' \frac{\mathfrak{C}_a}{ (\log M_N)^{(a+2)/2}} \quad \text{ under~\eqref{def:h}}\,.
\end{equation}
In other words, \((W_{M_N}^{\beta_N})_{N\geq 1}\) lies in the super-critical regime since \(\hat\beta'>\hat\beta_c\), but is ``strictly less super-critical'' than \((W_{N}^{\beta_N})_{N\geq 1}\), in the sense that \(\beta_N\) corresponds to some \(\hat\beta' <\hat\beta\) for \(M_N\).


\subsection{Variance, size-biased expectation and size-biased variance}
\label{sec:conclusion-proof}

We are now ready to give the main estimates on \(\tEE_f[X_N(f)]\) and \(\Var(X_N(f))\), \(\tVV_f( X_N(f))\), which hold uniformly in \(f\in \cM_1(\sqrt{\rho N})\).
For this, we introduce the following notation: for \(m\in \N\) and \(k\in \N\), define
\[
W_{m,\leq k}^{\beta_N} (x) = \sum_{A \subseteq \llb 1,m \rrb \times \Z^2,\, |A|\leq k} q^{(x)}(A) \xi(A)
\] 
a \(k\)-truncated version of \(W_m^{\beta_N} (x)\) (recall~\eqref{def:chaos-expansion} for the chaos expansion of \(W_m^{\beta_N}(x)\)).
We also let
\begin{equation}
  \label{def:V}
  \mathcal{V}_{m,\leq k}^{\beta_N} (x) \coloneqq \EE\big[ W_{m,\leq k}^{\beta_N}(x) W_{m,\leq k}^{\beta_N}(0)\big] =  \sum_{A,B \subseteq \llb 1,m \rrb \times \Z^2,\, |A|, |B|\leq k} q(A) q^{(x)}(B) \EE[\xi(A)\xi(B)]\,.
\end{equation}

We now express \(\tEE_f[X_N(f)]\) and \(\Var(X_N(f))\), \(\tVV_f( X_N(f))\) in terms of (sums of) the truncated covariances \(\mathcal{V}_{m,\leq k}^{\beta_N} (x)\), via a series of general results, see Lemmas~\ref{lem:tildeEX}, \ref{lem:diagonal-Var} and \ref{lem:off-diagonal-Var}.
Our goal is to clarify the role of \(\mathcal{V}_{m,\leq k}^{\beta_N} (x)\) in the different estimates.
Along the way, we highlight the core properties of the (truncated) covariances that are needed to conclude the proof of Theorem~\ref{thm:key-theorem}, see Claims~\ref{claim:sup-V}, \ref{claim:ratio} and~\ref{claim:lower-bound}.
The proofs of Lemmas~\ref{lem:tildeEX}, \ref{lem:diagonal-Var} and \ref{lem:off-diagonal-Var} are postponed to Section~\ref{sec:proof-variance} while the proofs of Claims~\ref{claim:sup-V}, \ref{claim:ratio} and~\ref{claim:lower-bound} are postponed to Section~\ref{sec:estim-V}.

\begin{remark}
  Let us stress that most of the following technical estimates become almost trivial in the case of an i.i.d.\ disorder, when \(h(x) = \ind_{\{x=0\}}\).
  In particular, only the term \(\mathcal{V}_{m,\leq k}^{\beta_N} (0)\) would appear in the following lemmas, and several claims or technical lemmas become unnecessary. 
  We will keep in mind this i.i.d.\ setting to see how our method greatly simplifies the approach in~\cite{BL17} --- this may be hidden by the technicalities of our proofs needed to deal with the spatially correlated case.
\end{remark}

We start with two estimates on the truncated covariances that are useful in the proof of the lemmas.
Claim~\ref{claim:sup-V} complements~\eqref{eq:sup-covariances} in the supercritical setting and in fact only relies on subcritical estimates (it is an easy consequence of~\eqref{eq:sup-covariances}) while Claim~\ref{claim:ratio} does not hide anything deep (in fact it is empty in the i.i.d.\ setting).

\begin{claim}
  \label{claim:sup-V}
  Assume that \((\beta_N)_{N\geq 0}\) follows the scaling of Assumption~\ref{hyp:scaling}, with \(\hat\beta\geq \hat{\beta}_c\).
  Then, there is a constant \(C_{\hat \beta}>1\) such that for any \(m\leq N\) and any \(k\geq 1\)
  \[
  1\leq \mathcal{V}_{m,\leq k}^{\beta_N} (x) \leq (C_{\hat\beta})^{k} \qquad \text{ for all } x\in \Z^2 \,.
  \]
\end{claim}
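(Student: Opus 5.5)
Expanding $\mathcal{V}_{m,\leq k}^{\beta_N}(x)$ in chaoses, as in~\eqref{chaos-covariances}, gives the plan. Since $\EE[\xi(A)\xi(B)]$ vanishes unless $A$ and $B$ share the same time indices, and equals $\beta_N^{2j}\prod_i h_N(x_i-y_i)\geq 0$ when they do (with $|A|=|B|=j$), we can write
\[
\mathcal{V}_{m,\leq k}^{\beta_N}(x)=\sum_{j=0}^{k} v_j(m,x)\,,
\]
where $v_j(m,x)$ is the contribution of the $j$-th chaos. Each $v_j$ is non-negative, because $q$, $h_N\geq 0$. The term $j=0$ equals $1$ (from $A=B=\emptyset$), which gives the lower bound $\mathcal{V}\geq 1$ at once.

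For the upper bound, the plan is to bound each $v_j$ by a sub-critical quantity. The key observation is that $v_j(m,x)$ is homogeneous of degree $j$ in $\beta_N^2$, up to the mild $\beta_N$-dependence of $h_N$; recall $h\leq h_N\leq \e^{\beta_N^2}h\leq 2h$. We fix $\hat\beta_0<\hat\beta_c$, say $\hat\beta_0=\hat\beta_c/2$, and set $r\coloneqq(\hat\beta/\hat\beta_0)^2>1$. Then $\beta_N^2=r\,\beta_N^{(0)\,2}$, where $\beta_N^{(0)}$ follows Assumption~\ref{hyp:scaling} with $\hat\beta_0$. Writing $v_j^{(0)}$ for the $j$-th chaos term at $\beta_N^{(0)}$ (with its own $h_N^{(0)}\geq h$), we get
\[
v_j(m,x)\leq (\beta_N^2)^j\sum\prod (2h)\,q\,q \leq (2r)^j\,v_j^{(0)}(m,x)\,.
\]
Each $v^{(0)}_j$ is non-negative and is one of the terms in~\eqref{chaos-covariances} at $\hat\beta_0$. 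By monotonicity in $m\leq N$ and by~\eqref{eq:sup-covariances},
\[
v_j^{(0)}(m,x)\leq \EE\big[W_N^{\beta_N^{(0)}}(0)\,W_N^{\beta_N^{(0)}}(x)\big]\leq C_0\,,
\]
uniformly in $N$, $j$ and $x$.

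Summing over $j\leq k$ then gives
\[
\mathcal{V}_{m,\leq k}^{\beta_N}(x)\leq C_0\sum_{j=0}^k(2r)^j\leq C_0(k+1)(2r)^k\leq (C_{\hat\beta})^k
\]
for a suitable $C_{\hat\beta}>1$, using $k\geq 1$. The only points needing care are the comparison between $h_N$ at $\beta_N$ and at $\beta_N^{(0)}$, which the factor $2$ covers since both $\beta_N\to 0$, and the fact that~\eqref{eq:sup-covariances} is stated uniformly in the starting points $x,y$. I do not expect a genuine obstacle: the whole argument rests on termwise non-negativity of the chaos expansion and on scaling each chaos by a power of $\beta^2$.
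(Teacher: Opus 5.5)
Your proof is correct and follows essentially the same route as the paper: each chaos term at $\beta_N$ is compared with the same term at a subcritical $\tilde\beta_N$ (scaled with $\hat\beta_c/2$) using $h\le h_N\le \e^{\beta_N^2}h$, and the result is closed with monotonicity in $m$ and the uniform subcritical bound~\eqref{eq:sup-covariances}. The only cosmetic difference is that the paper bounds the whole truncated sum at once by $(\beta_N^2\e^{\beta_N^2}/\tilde\beta_N^2)^k\,\mathcal{V}^{\tilde\beta_N}_{m,\leq k}(x)$, which avoids your harmless extra factor $k+1$.
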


\begin{claim}
  \label{claim:ratio}
  Set \(T_N\coloneqq +\infty\) under~\eqref{finite-sum} and \(T_N\coloneqq \sqrt{N \log N}\) under~\eqref{def:h}.
  Then, for any sequence \((t_N)_{N\geq 1}\) with \(t_N\leq T_N\) verifying \(t_N= N^{\frac12+o(1)}\),
  \[
  \lim_{N\to\infty}  \frac{\sum_{|z|\leq t_N} h(z) \mathcal{V}_{M_N,\leq k_N}^{\beta_N} (z) }{\sum_{|z|\leq T_N} h(z) \mathcal{V}_{M_N,\leq k_N}^{\beta_N} (z) } =1 \,.
  \]
  The same holds true if both sums are restricted to \(z\in \Zeven^2\).
\end{claim}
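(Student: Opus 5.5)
The plan is to compare numerator and denominator by bounding the ``tail''
\[
R_N \coloneqq \sum_{t_N<|z|\leq T_N} h(z)\, \mathcal{V}_{M_N,\leq k_N}^{\beta_N}(z)
\]
relative to the denominator. Three facts do the work. Claim~\ref{claim:sup-V} gives \(\mathcal{V}\geq 1\) and \(\mathcal{V}\leq C_{\hat\beta}^{k_N}=N^{o(1)}\), since \(k_N=(\log\log N)^2\). The time horizon \(M_N=N^{\gamma}\) with \(\gamma<1\) is polynomially smaller than \(t_N^2=N^{1+o(1)}\). And \(\log(T_N/t_N)=o(\log N)\). The point is that for \(|z|>t_N\) the two truncated partition functions started at \(0\) and \(z\) live on time scale \(M_N\), so their walks stay at distance \(\gg\sqrt{M_N}\) of their starting points. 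Hence \(\mathcal{V}(z)\) should be \(1+\)(negligible), and the tail reduces to \(\sum_{t_N<|z|\le T_N}h(z)\).

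\emph{Step 1 (decomposition of \(\mathcal V(z)-1\)).} I expand \(\mathcal{V}_{M_N,\leq k_N}^{\beta_N}(z)\) via~\eqref{def:V} and~\eqref{eq:cov-xi} and isolate the first common time index \(n_1=n\) with positions \(x,y\). The remaining factor is again a truncated covariance started from \((x,y)\), which I bound by \(C_{\hat\beta}^{k_N}\) using Claim~\ref{claim:sup-V} and translation invariance. This gives
\[
0\le \mathcal{V}(z)-1\leq C_{\hat\beta}^{k_N}\,\beta_N^2 \sum_{n=1}^{M_N}\sum_{x,y} h_N(x-y)\, q_n(x)\, q_n(y-z).
\]
For \(|z|>t_N\) I split the inner sum according to whether \(|x|\) or \(|y-z|\) exceeds \(|z|/4\), or neither. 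If one of them does, a local CLT / Hoeffding bound gives a factor \(\e^{-c\,t_N^2/M_N}=\e^{-N^{1-\gamma+o(1)}}\), which kills everything. If neither does, then \(|x-y|\geq |z|/2\).

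\emph{Step 2 (the regime \(|x-y|\ge |z|/2\)).}
\begin{itemize}
\item Under~\eqref{def:h}, I use \(h(w)\le C(\log N)^a/|w|^2\). The contribution is then at most \(N^{o(1)} M_N/|z|^2\). Multiplying by \(h(z)\) and summing over \(|z|>t_N\) gives \(N^{o(1)}M_N t_N^{-2}\sum h = N^{\gamma-1+o(1)}\to0\). This has to be compared with the denominator, which is at least \(\sum_{|z|\le t_N}h(z)\asymp (\log N)^{a+1}\) by~\eqref{sum:h}.
\item The ``\(1\)'' part of \(\mathcal V\) contributes \(\sum_{t_N<|z|\le T_N}h(z)\asymp (\log N)^a\log(T_N/t_N)=o((\log N)^{a+1})\), since \(\log r\) stays equal to \((\tfrac12+o(1))\log N\) on that range.
\end{itemize}
The ratio therefore tends to \(1\). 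Restricting to \(\Zeven^2\) changes nothing, since all bounds are upper bounds or hold up to a factor \(2\).

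\emph{Step 3 (summable case, \(T_N=\infty\)).} Here the denominator is at least \(h(0)=1\), and \(\sum_{|z|>t_N}h(z)\to0\), so again only \(\sum_{|z|>t_N}h(z)(\mathcal{V}(z)-1)\) needs care. I expect this to be the main obstacle. There is no pointwise decay of \(h\), and the crude bound in the \(|x-y|\ge |z|/2\) regime produces \(N^{o(1)}\beta_N^2 M_N\sum_{|w|>t_N/2}h(w)\). This need not be small, because \(M_N\) is polynomial while the tail of \(h\) is not quantified.

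My first attempt is to avoid extracting \(\sup h\) and to keep the sum over \(z\) inside. Summing \(h(z)\) over \(z\) and \(h(x-y)\) over the pair gives \(\Sigma_h^2\sum_n \sup_w q_n(w)\le C\Sigma_h^2\log M_N\), so that \(\beta_N^2\log M_N=O(1)\). I then aim to gain smallness from the constraint \(|z|>t_N\) through the Gaussian displacement of \(x\) or \(y-z\). Concretely, I would bound \(\sum_{|z|>t_N}h(z)\mathbf 1_{\{|x-y|\ge|z|/2\}}\) by \(\big(\sum_{|z|>t_N}h(z)\big)\) times the unrestricted sum over \(x,y\). This yields \(C^{k_N}\cdot o(1)\cdot O(1)\).

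That would still fail if the tail of \(h\) is only \(o(1)\) while \(C^{k_N}\) grows. The fallback is to replace \(C_{\hat\beta}^{k_N}\) by the sharper subcritical bound: the remaining factor is a covariance over time \(\le M_N\). Since I am free to reorganize the chaos, I would take only the \emph{last} crossing at distance \(>|z|/4\) and bound the rest by a subcritical covariance via~\eqref{eq:sup-covariances}. If necessary I would instead use that the tail of \(h\) beyond \(N^{1/2+o(1)}\) can be traded against \(k_N\) by choosing \(k_N\) growing slowly enough.
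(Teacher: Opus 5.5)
\emph{Verdict.} Your Step~2 under \eqref{def:h} is correct. The summable case, however, has a genuine gap, exactly where you flagged it, and neither fallback closes it.

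\emph{The non-summable case.} Your Step~2 takes a different route from the paper. You show that \(\mathcal{V}_{M_N,\leq k_N}^{\beta_N}(z)-1\) is itself negligible for \(|z|>t_N\). The loss \(C_{\hat\beta}^{k_N}=N^{o(1)}\) is absorbed by \(M_N t_N^{-2}=N^{\gamma-1+o(1)}\), thanks to the pointwise decay of \(h\).

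\emph{Where the summable case breaks.} After replacing the inner covariance by \(C_{\hat\beta}^{k_N}\), you are left with \(C_{\hat\beta}^{k_N}\cdot O(1)\cdot\sum_{|z|>t_N}h(z)\). You must compare this with a denominator that you only bound below by \(1\).
\begin{itemize}
\item Slowing down \(k_N\) cannot help. Claim~\ref{claim:lower-bound}, which is needed elsewhere, together with \(\mathcal{V}_{M_N,\leq k_N}^{\beta_N}(0)\leq C_{\hat\beta}^{k_N}\), forces \(C_{\hat\beta}^{k_N}\gg(\log N)^b\) for every \(b\). Under \eqref{finite-sum}, by contrast, the tail \(\sum_{|z|>t_N}h(z)\) has no rate at all; it can be of order \(1/\log\log N\).
\item Nor can the remaining factor be bounded by a subcritical covariance via \eqref{eq:sup-covariances}. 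It is a covariance at the supercritical \(\beta_N\). The only available comparison with a subcritical one, \eqref{eq:compare-beta-tilde-beta}, costs exactly the same factor \((\beta_N^2\e^{\beta_N^2}/\tilde\beta_N^2)^{k}\).
\end{itemize}

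\emph{The missing idea.} Compare the inner factor with the denominator itself, not with its supremum. In the first-point decomposition:
\begin{itemize}
\item replace \(\mathcal{V}_{M_N-n,\leq k_N-1}^{\beta_N}\) by \(\mathcal{V}_{M_N,\leq k_N}^{\beta_N}\), by monotonicity;
\item bound only the kernel issued from \(z\) by \(c/n\), using the local limit theorem;
\item sum over \(x\), then \(y\), then \(n\leq M_N\).
\end{itemize}
Write \(\Sigma_N\coloneqq\sum_{w\in\Z^2}h(w)\,\mathcal{V}_{M_N,\leq k_N}^{\beta_N}(w)\) for the denominator. Using \(\beta_N^2\log N=O(1)\) and \(h_N\leq\e^{\beta_N^2}h\), this gives, uniformly in \(z\),
\[
\mathcal{V}_{M_N,\leq k_N}^{\beta_N}(z)\leq 1+c\,\beta_N^2\log N\sum_{w\in\Z^2}h_N(w)\,\mathcal{V}_{M_N,\leq k_N}^{\beta_N}(w)\leq 1+C\,\Sigma_N\,.
\]
Hence \(\sum_{|z|>t_N}h(z)\mathcal{V}_{M_N,\leq k_N}^{\beta_N}(z)\leq\big(\sum_{|z|>t_N}h(z)\big)(1+C\Sigma_N)\). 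Since \(\Sigma_N\geq1\), the tail-to-denominator ratio is at most \((1+C)\sum_{|z|>t_N}h(z)\to0\). No decay of \(\mathcal{V}-1\) is needed, and no factor \(C_{\hat\beta}^{k_N}\) appears.

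This is the paper's argument, used in both cases. Under \eqref{def:h} it is applied after discarding \(|x-y|>T_N\), which costs only \(C_{\hat\beta}^{k_N}M_N/N\to0\). Your ``first attempt'' is this very computation, spoiled only by having already replaced \(\mathcal{V}\) by \(C_{\hat\beta}^{k_N}\).

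\emph{A repair within your own scheme.} In the regime \(|x-y|\geq|z|/2\), the inner covariance is again evaluated at a large separation. So you can iterate your Step~1 bound: at most \(k_N\) halvings are needed, and \(t_N2^{-k_N}=N^{1/2+o(1)}\). In the complementary regime, where \(|x|\) or \(|y-z|\) is large, keep the crude bound \(C_{\hat\beta}^{k_N}\), since that contribution is super-polynomially small. The geometric ratio of the iteration is \(C\beta_N^2\log N\sum_{|u|\geq t_N2^{-k_N}}h(u)\to0\). This yields \(\sup_{|z|\geq t_N}\big(\mathcal{V}_{M_N,\leq k_N}^{\beta_N}(z)-1\big)\to0\), which suffices.
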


\subsubsection{About the variance and size-biased expectation of \texorpdfstring{\(X_N(f)\)}{}}

We have the following lemma to deal with the first term in~\eqref{eq:change-of-measure-2}; here \(T_N\) is as defined in Claim~\ref{claim:ratio}.

\begin{lemma}[Size-biased expectation]
  \label{lem:tildeEX}
  For any \(f \in \mathcal{M}_1\), we have
  \[
  \tEE_f[X_N(f)] = \Var(X_N(f)) \,.
  \]
  Additionally, there are some constants \(c,C>0\) such that
    \begin{equation}
    \label{eq:upper-variance}
    \sup_{f \in \mathcal{M}_1(\sqrt{\rho N})} \Var(X_N(f)) \leq C \, \log\Big(\frac{2}{\rho}\Big) \;  \beta_N^2 \sum_{z\in \Zeven^2, |z| \leq T_N} h(z)\mathcal{V}_{M_N,\leq k_N}^{\beta_N}(z)\,,
  \end{equation}
  and
  \begin{equation}
    \label{eq:lower-tEE}
    \inf_{f \in \mathcal{M}_1(\sqrt{\rho N})} \tEE_f[X_N(f)] \geq c\,  \log\Big(\frac{2}{\rho}\Big) \, \beta_N^2 \sum_{z\in \Zeven^2,|z|\leq T_N} h(z)\mathcal{V}_{M_N,\leq k_N}^{\beta_N}(z)\,.
  \end{equation}
\end{lemma}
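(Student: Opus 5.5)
The identity comes first, and it is where the choice of proxy pays off. By definition~\eqref{def:size-biased}, $\tEE_f[X_N(f)] = \EE[W_N^{\beta_N}(f) X_N(f)]$. Both $W_N^{\beta_N}(f)$ and $X_N(f)$ have chaos expansions. The expansion of $X_N(f)$ is a sub-sum of that of $W_N^{\beta_N}(f)$, with the same coefficients $q^{(f)}(A)$ but restricted to the index set $\mathcal{I}=\bigcup_{n\in[\rho N,N/2],x}\mathcal{I}_{n,x}$. Hence
\[
\EE[W_N^{\beta_N}(f)X_N(f)]=\sum_{A\in\mathcal I}\sum_{B}q^{(f)}(A)q^{(f)}(B)\EE[\xi(A)\xi(B)] .
\]
By~\eqref{eq:cov-xi}, only those $B$ with the same time indices as $A$ contribute. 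The set $\mathcal{I}$ is defined purely through the time indices of $A$: the start time lies in $[\rho N,N/2]$, the width is at most $M_N$, and the cardinality is at most $1+k_N$. So every such $B$ also lies in $\mathcal I$, and the sum equals $\EE[X_N(f)^2]$. Since $\EE[X_N(f)]=0$ (there is no empty set in $\mathcal I$), this is $\Var(X_N(f))$. It follows that~\eqref{eq:lower-tEE} reduces to a lower bound on $\Var(X_N(f))$, and I only need to bound $\Var(X_N(f))$ from above and below by the same quantity.

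Next I would compute the variance by decomposing over the common first time $n$ and the first points $x,y$ of $A$ and $B$. After the first point, the remaining parts of $A$ and $B$ are sets within $\llb n+1,n+M_N\rrb$ of size at most $k_N$, started from $x$ and $y$ respectively. Summing over them yields exactly the truncated covariance $\mathcal V_{M_N,\le k_N}^{\beta_N}(y-x)$, using translation invariance and the fact that the width constraint matches the horizon $M_N$. This gives
\[
\Var(X_N(f))=\beta_N^2\sum_{n=\rho N}^{N/2}\sum_{x,y}q_n^{(f)}(x)q_n^{(f)}(y)\,h_N(x-y)\,\mathcal V^{\beta_N}_{M_N,\le k_N}(y-x),
\]
where $q_n^{(f)}(x)=\sum_w f(w)q_n(x-w)$ and $h_N=(1+o(1))h$. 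Setting $z=y-x$, the inner sum becomes $\sum_z h(z)\mathcal V(z)\,(q_n^{(f)}\ast \bar q_n^{(f)})(z)$. For $f\in\mathcal M_1(\sqrt{\rho N})$ and $n\ge\rho N$, the local CLT gives $q_n^{(f)}\ast\bar q_n^{(f)}(z)\le C/n$ uniformly in $z$, and $\ge c/n$ for $|z|\le \sqrt n$ (with parity taken into account). Summing $1/n$ over $n\in[\rho N,N/2]$ produces the factor $\log(2/\rho)$, up to constants.

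For the upper bound I split the $z$-sum at $T_N$. The part $|z|\le T_N$ gives~\eqref{eq:upper-variance} directly. Under~\eqref{finite-sum} we have $T_N=\infty$, so there is nothing more to do. Under~\eqref{def:h}, for $|z|>T_N=\sqrt{N\log N}$ I use the Gaussian tail $q_n^{(f)}\ast\bar q_n^{(f)}(z)\le \frac Cn e^{-c|z|^2/N}$ (valid since $|z|\gg\sqrt N$ dominates the support radius $\sqrt{\rho N}$), which gives a factor $N^{-c}$. Claim~\ref{claim:sup-V} bounds $\mathcal V\le C^{k_N}=N^{o(1)}$, and $h$ is bounded, so this part is $N^{-c'}$ times something polynomially small. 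I then need it to be negligible relative to the main term, which is at least of order $\beta_N^2\sum_{|z|\le\sqrt N}h(z)\ge \beta_N^2$ (using $\mathcal V\ge1$). This comparison is fine, but I would pick the tail carefully, possibly using $e^{-c\log N}$ with the constant adjusted. For the lower bound, I restrict to $|z|\le t_N:=\sqrt{\rho N}$, where the kernel is $\ge c/n$. Claim~\ref{claim:ratio} then compares $\sum_{|z|\le t_N}$ to $\sum_{|z|\le T_N}$; this is valid since $t_N=N^{1/2+o(1)}$ when $\rho=(\log N)^{-1}$.

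I expect the main obstacle to be the uniform local-CLT lower bound $q_n^{(f)}\ast\bar q_n^{(f)}(z)\ge c/n$ for $|z|\le\sqrt{\rho N}$, uniformly over all densities $f$ supported in $B_{\sqrt{\rho N}}$, together with the parity bookkeeping (the restriction to $\Zeven^2$). I would handle it by writing the convolution as $\sum_{w,w'}f(w)f(w')q_{2n}(z+w-w')$. The displacement $z+w-w'$ is $O(\sqrt{\rho N})=O(\sqrt n)$, so $q_{2n}$ of it is $\ge c/n$ on the correct parity class. The bound holds because $f$ is a probability measure.
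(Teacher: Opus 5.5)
Your overall route is the paper's. The pieces all match: the identity $\tEE_f[X_N(f)]=\Var(X_N(f))$ via time-index orthogonality, the first-point decomposition producing $\beta_N^2 h_N(x-y)\,\mathcal{V}^{\beta_N}_{M_N,\le k_N}(x-y)$, local-CLT bounds of order $1/n$ on the two-point kernel, the harmonic sum giving $\log(2/\rho)$, and Claim~\ref{claim:ratio} with $t_N=\sqrt{\rho N}$ for the lower bound. Writing the kernel as $\sum_{w,w'}f(w)f(w')\,q_{2n}(z+w-w')$ is a tidy equivalent of the paper's restriction to $|x|,|x-y|\le\sqrt{\rho N}$. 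Your parity bookkeeping there is correct.

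The step that fails as written is the tail $|z|>T_N$ under~\eqref{def:h}.
\begin{itemize}
\item If you only use that $h$ is bounded, the contribution at fixed $n$ is at most $\beta_N^2 e^{\beta_N^2} C^{k_N}\,\bP_f^{\otimes 2}\big(|S^{(1)}_n-S^{(2)}_n|>T_N\big)$, with no factor $1/n$.
\item Equivalently, summing your pointwise bound $\frac{C}{n}e^{-c|z|^2/N}$ over $|z|>T_N$ costs a volume factor of order $N$. You get $\frac{C}{n}N^{1-c}$, not $\frac{C}{n}N^{-c}$.
\item This loss is intrinsic. $S^{(1)}_n-S^{(2)}_n$ has covariance $n$ times the identity and $T_N^2=N\log N$, so the probability is of order $N^{-1}$ when $n$ is close to $N/2$. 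No adjustment of the constant in the exponent helps, since $T_N$ is fixed by the statement.
\item It therefore cannot be made $\lesssim \frac{1}{n\,C^{k_N}}$. That is what you would need to compare with the main term $\frac{C}{n}\beta_N^2\sum_{|z|\le T_N}h(z)\mathcal{V}(z)$. Nothing available lets this sum absorb the factor $C^{k_N}$ from Claim~\ref{claim:sup-V}; you only know it is at least $1$.
\item Your remark that the main term is ``at least of order $\beta_N^2$'' dropped this $1/n$, which is why the comparison looked fine.
\end{itemize}

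The missing ingredient is the decay of $h$ rather than its boundedness. For $|z|>T_N=\sqrt{N\log N}$, \eqref{def:h} gives $h(z)\le C(\log N)^{a-1}/N\le C(\log N)^{a-1}/n$, which supplies the missing $1/n$. Then the crude bound $\bP_f^{\otimes 2}\big(|S^{(1)}_n-S^{(2)}_n|>T_N\big)\le N^{-c}$, with any $c>0$, closes the upper bound. Combine it with $C^{k_N}=N^{o(1)}$ and $\sum_{|z|\le T_N}h(z)\mathcal{V}(z)\ge h(0)\mathcal{V}(0)\ge 1$. This is exactly how the paper handles the tail.
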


This shows that the first part in~\eqref{eq:change-of-measure-2} verifies
\begin{equation}
  \label{key-1}
  \sup_{f\in \mathcal{M}_1(\sqrt{\rho N})} \frac{\rho^{-1} \Var(X_N(f))}{\tEE_{f}[X_N(f)]^2}   \leq C' \bigg( \rho \log \Big(\frac2\rho\Big) \beta_N^2 \sum_{z\in \Zeven^2, |z| \leq T_N} h(z)\mathcal{V}_{M_N,\leq k_N}^{\beta_N}(z) \bigg)^{-1} \,.
\end{equation}
We now only need to show the following lower bound on \(\mathcal{V}_{M_N,\leq k_N}^{\beta_N}(z)\).
It relies the fact that \((W_{M_N}^{\beta_N})_{N\geq 1}\) lies in the super-critical regime (recall that \(\hat\beta'>\hat\beta_c\) in~\eqref{eq:beta-MN}).

\begin{claim}
  \label{claim:lower-bound} 
  For any \(\hat\beta'>\hat\beta_c\) in~\eqref{eq:beta-MN}, we have that for any \(b>0\)
  \[
  \lim_{N\to\infty} (\log N)^{-b}\, \mathcal{V}_{M_N,\leq k_N}^{\beta_N}(0) = +\infty  \,.
  \]
\end{claim}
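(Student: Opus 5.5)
We need to show that \(\mathcal{V}_{M_N,\leq k_N}^{\beta_N}(0)\) grows faster than any power of \(\log N\) when \(\beta_N\) is supercritical at scale \(M_N\), with chaos truncated at \(k_N=(\log\log N)^2\). The plan is to write the truncated second moment as a sum over the chaos order,
\[
\mathcal{V}_{M_N,\leq k_N}^{\beta_N}(0) = \sum_{k=0}^{k_N} \beta_N^{2k} U_k(M_N), \qquad U_k(m) \coloneqq \sum_{1\le n_1<\dots<n_k\le m}\ \sum_{x_i,y_i} \prod_{i=1}^k h_N(x_i-y_i)\, q_{n_i-n_{i-1}}(x_i-x_{i-1})\, q_{n_i-n_{i-1}}(y_i-y_{i-1}),
\]
with \(x_0=y_0=0\). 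All terms are nonnegative, so it suffices to bound a single well-chosen term \(\beta_N^{2k}U_k(M_N)\) from below.

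We use a supermultiplicativity/renewal structure. Fix a block length \(L=M_N^{\eta}\) with \(\eta\) small (or \(L=M_N^{1/j}\) for an integer \(j\) to be chosen). Restrict the sum to configurations in which the time indices are grouped into \(j\) consecutive blocks of duration at most \(L\), and in which the two walks at the end of each block are at relative distance \(O(\sqrt{L})\), or more simply at distance \(0\) modulo a bounded range.

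The key input is the supercritical divergence at scale \(L\). Relative to \(\log L\) the parameter is \(\hat\beta'\eta^{-(a+2)/2}\) (resp.\ \(\hat\beta'\eta^{-1/2}\)), which is above \(\hat\beta_c\). Hence Theorem~\ref{thm:second-moment} gives \(\EE[(W_L^{\beta_N})^2]\to\infty\). What we actually need is a restricted version: there exist \(K\) and an order \(k_0\le K\) such that the \(K\)-truncated, point-to-\(O(1)\)-relative-position, duration-\(\le L\) quantity
\[
\sum_{k\le K}\beta_N^{2k}U_k^{\mathrm{pt}}(L)
\]
exceeds \(2\). This follows from monotone convergence applied to the limiting series in Theorem~\ref{thm:second-moment}: the limit is \(+\infty\), so a finite truncation already exceeds any constant. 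Passing from point-to-plane to point-to-point costs only a bounded factor by the local limit theorem, at the price of taking a larger \(\hat\beta\) margin.

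Next, concatenate \(j\) such blocks, using the Markov property of the pair \((S,S')\) and translation invariance. The relative position is the only state that matters, and it is kept in a compact set. This yields
\[
\mathcal{V}_{M_N,\leq jK}^{\beta_N}(0) \ge c\,2^{j}.
\]
Choose \(j=j_N\to\infty\) with \(jL\le M_N\) and \(jK\le k_N\). Since \(K\) depends on \(\eta\) only, we can take \(j\asymp (\log\log N)^2/K\). Then \(2^j=\exp(c(\log\log N)^2)\), which beats every \((\log N)^b\).

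The total time constraint \(jL\le M_N\) is where care is needed. With \(L=M_N^{\eta}\) fixed, the parameter at scale \(L\) stays supercritical uniformly in \(N\), so \(K\) is uniform in \(N\). This uniformity is exactly what a fixed \(\eta\) buys.

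The main obstacle is making the block inequality uniform in \(N\). We need a fixed \(K\) and a bounded window for the relative position such that the restricted block weight exceeds \(2\) for all large \(N\), and under \eqref{def:h} the correlations are long-range. We will first try to deduce this from the convergence of the \(k\)-th chaos contributions to the explicit limits underlying \eqref{eq:limit-second-moment-2}, as computed in \cite{CCD25}, reading it off term by term. Where relative displacements of order \(\sqrt{L}\) break the concatenation, we will instead let the endpoint relative position range over \(|z|\le\sqrt{L}\) and use that \(\mathcal{V}\) at such \(z\) is comparable to that at \(0\), in the spirit of Claim~\ref{claim:ratio}.
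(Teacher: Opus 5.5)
There is a genuine gap, and it lies in the non-summable case \eqref{def:h}, which is the heart of the claim.

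First, a fixable slip. At scale $L=M_N^{\eta}$ one has $\beta_N=\hat\beta'\eta^{(a+2)/2}\mathfrak{C}_a(\log L)^{-(a+2)/2}$. So the effective parameter is $\hat\beta'\eta^{(a+2)/2}$ (resp.\ $\hat\beta'\eta^{1/2}$), which is \emph{smaller} than $\hat\beta'$ and becomes subcritical for small $\eta$. You need $\eta$ close to $1$, or, as in the paper, time increments of length at most $M_N/k_N=M_N^{1-o(1)}$.

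The real problem is the state you carry between blocks. Under \eqref{def:h}, the relative position $z$ of the two replicas at a chaos point is spread over all logarithmic scales: the mass of $h$ on $\{|z|\approx L^{s/2}\}$ is of order $(\log L)^{a+1}s^a\,\dd s$, and this is where the Bessel constant comes from. Consequences:
\begin{itemize}
\item Forcing each block to end at $|z|=O(1)$ multiplies the last step of the block by at most $\beta_N^2\log L\sum_{|z|\le C}h(z)\asymp(\log N)^{-(a+1)}\to0$. So no fixed-$K$ restricted block weight can exceed $2$ for large $N$. Your assertion that passing to point-to-point costs only a bounded factor is true only under \eqref{finite-sum}.
\item Your fallback fails as well. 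Allowing endpoints $|z|\le\sqrt L$ gives no uniform lower bound for the next block: a block started from $|z|\approx L^{s_0/2}$ has its first step damped by the factor $(1-s_1\vee s_0)$ of the limiting kernel, which vanishes as $s_0\to1$. Hence $\mathcal{V}(z)$ is not comparable to $\mathcal{V}(0)$ for such $z$.
\item Claim~\ref{claim:ratio} does not rescue this. It compares $h$-weighted sums up to $N^{1/2+o(1)}$ and $T_N$, and says nothing pointwise.
\end{itemize}

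What is needed is to cut each block's endpoint at a log-scale $\zeta<1$, i.e.\ $|z|\le L^{\zeta/2}$, and take the infimum over starting points in that same set. One must then prove that this doubly restricted kernel still has exponential growth rate $(\hat\beta/\hat\beta_c)^2$, i.e.\ that $\limsup_k\alpha_k(\zeta;1,\zeta)^{1/k}$ equals the unrestricted rate.

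This does not follow term by term from \eqref{eq:limit-second-moment-2}, which only controls the unrestricted series started from $0$. The worst start $s_0=\zeta$ and the endpoint constraint $s_k\le\zeta$ pull in opposite directions as $\zeta$ varies.

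The paper handles exactly this:
\begin{itemize}
\item it uses the lower bound of \cite[Lemma~2.7]{CCD25} in log-scale variables $l_i/M$;
\item it uses the super-multiplicativity \eqref{eq:Ik-concatenation} of $\widetilde I_k^{(M)}(M-1,\zeta M)$;
\item it proves \eqref{eq:limsupalpha} via $\alpha_k(\zeta;1,\zeta)\ge\frac{(1-\xi)\zeta^{a+1}}{a+1}\xi^{(k-1)(a+1)}\alpha_{k-1}(\zeta/\xi;1,1)$, combined with the Bessel radius of convergence from \cite[Thm.~6.1]{CCD25}.
\end{itemize}

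Under \eqref{finite-sum} your block scheme does work. There the paper is even more direct: restricting to $|z_i|\le\log N$ and increments at most $N/k_N$ yields a factor $(1-\gep)\hat\beta^2$ per chaos point, with no blocks needed.
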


\noindent
This shows in particular that \(\rho \log(\frac{2}{\rho}) \beta_N^2 \sum_{|z|\leq T_N} h(z)\mathcal{V}_{M_N,\leq k_N}^{\beta_N}(z) \geq \rho \beta_N^2 \mathcal{V}_{M_N,\leq k_N}^{\beta_N}(0) \to\infty\) as \(N\to\infty\), recalling the scaling of \(\beta_N\) from Assumption~\ref{hyp:scaling} and our choice \(\rho=(\log N)^{-1}\).
All together, this shows that the right-hand side of~\eqref{key-1} goes to \(0\).

\subsubsection{About the size-biased variance \texorpdfstring{\(X_N(f)\)}{}}

We now turn to estimating the second part in~\eqref{eq:change-of-measure-2}, namely the size-biased variance of \(X_N(f)\).
We split \(\tVV_f(X_N(f))\) into two different contributions: the diagonal and the off-diagonal part.
More precisely, recalling the definition~\eqref{def:proxy} of \(X_N(f)\) and expanding the covariances, we have that
\[
\begin{split}
  \tVV_f(X_N(f)) & = \bigg( \sumtwo{n_1,n_2=\rho N}{|n_1-n_2| \leq 2 M_N}^{\frac12 N} +  \sumtwo{n_1,n_2=\rho N}{|n_1-n_2|>2 M_N}^{\frac12 N} \bigg)\sum_{x_1,x_2 \in \Z^2} \tCC_f(X_{n_1,x_1}(f),X_{n_2,x_2}(f))  \\
  & =: \quad \mathrm{Diag}_{f} \quad + \quad \mathrm{Off\text{-}Diag}_{f} \,.
\end{split}
\]
We then have two lemmas dealing with the two terms.

\begin{lemma}[Diagonal contribution]
  \label{lem:diagonal-Var}
  Define \(\overline{\mathrm{Diag}}_{f} \coloneqq \mathrm{Diag}_{f} - \Var(X_N(f))\).
  Then there is a constant \(C = C_{\hat{\beta}}>0\) such that 
  \[
   \sup_{f\in \cM_1(\sqrt{\rho N})}\overline{\mathrm{Diag}}_{f} \leq  C^{k_N} \Big(\frac{\rho N}{M_N}\Big)^{-1/2}\, .
  \]
\end{lemma}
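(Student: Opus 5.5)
We first compute the size-biased covariance of two chaos monomials. Since \(\tPP_f(\cdot)=\EE[W_N^{\beta_N}(f)\,\cdot\,]\) and \(W_N^{\beta_N}(f)=\sum_C q^{(f)}(C)\xi(C)\), for finite sets \(A,B\) we have
\[
\tEE_f[\xi(A)\xi(B)]=\sum_C q^{(f)}(C)\,\EE[\xi(A)\xi(B)\xi(C)].
\]
The Gaussian structure is independent in time, and within a time slice \(\xi_{n,x}\xi_{n,y}\) has mean \(\beta_N^2 h_N(x-y)\) and nonzero higher mixed moments. Hence the triple moment factorizes over time slices. A slice contributes nonzero only if at least two of \(A,B,C\) have a point there. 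Expanding \(\tCC_f(X_{n_1,x_1},X_{n_2,x_2})=\tEE_f[X_{n_1,x_1}X_{n_2,x_2}]-\tEE_f[X_{n_1,x_1}]\tEE_f[X_{n_2,x_2}]\), we classify terms by the configuration of time slices. The ``pure'' configurations, where \(C\) plays no role on the slices of \(A\cup B\) (i.e.\ \(A,B\) share exactly the same time indices and \(C\) avoids them, so that summing \(q^{(f)}(C)\) over \(C\) against the complementary environment gives a factor \(1\)), reproduce exactly \(\EE[X_{n_1,x_1}X_{n_2,x_2}]\). Summed over the diagonal range \(|n_1-n_2|\le 2M_N\), these give the terms of \(\Var(X_N(f))\); the off-diagonal pairs contribute nothing here since time indices cannot coincide. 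We also subtract the product of size-biased means, which equals \(\EE[X_{n_1,x_1}W(f)]\,\EE[X_{n_2,x_2}W(f)]\). So \(\overline{\mathrm{Diag}}_f\) is bounded by the sum over ``mixed'' configurations where \(C\) shares slices with \(A\) and \(B\) in a genuinely three-way way, minus the factorized product.

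The key structural point is that in every mixed term the set \(C\) must have points in the window \(\llb n_1\wedge n_2, n_1\vee n_2+M_N\rrb\), with \(n_i\ge \rho N\), and its weight \(q^{(f)}(C)\) carries a random-walk propagation from the starting density \(f\), supported in \(B_{\sqrt{\rho N}}\), up to time \(\ge \rho N\). We will bound the resulting sums as follows:
\begin{enumerate}[label=(\roman*)]
\item Use the triangle/Cauchy--Schwarz structure of \(h_N\) and \(\EE[\xi^3]=O(\beta_N^3)\), \(\EE[\xi^j]\le C^j\beta_N^2\)-type bounds (Gaussian exponential moments) to control each time slice with three participants.
\item Sum the chaos parts of \(A\) and \(B\) beyond their last interaction with \(C\) using Claim~\ref{claim:sup-V}, each truncated covariance costing at most \(C_{\hat\beta}^{k_N}\). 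Since \(|A|,|B|\le 1+k_N\), the total combinatorial factor is \(C^{k_N}\).
\item Crucially, the first point of \(C\) that meets \(A\) or \(B\) lies at time \(n\ge\rho N\), whereas the extra interaction requires \(C\) (equivalently the walk from \(f\)) to be at a specific location at a time within \(M_N\) of another constrained point. We will use the local limit bound \(\sup_z q_n(z)\le C/n\) for the propagation from \(f\) at time \(\ge\rho N\), compared against the \(O(1/\text{gap})\) cost at scale \(M_N\).
\end{enumerate}
Comparing the free sum with the constrained one, the step in (iii) should yield a gain of order \((M_N/\rho N)^{1/2}\), say through a Cauchy--Schwarz between the two copies \(A,B\) each meeting \(C\) once, giving the square root.

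The main obstacle is step (iii): extracting exactly the factor \((\rho N/M_N)^{-1/2}\) uniformly in \(f\in\cM_1(\sqrt{\rho N})\) while the spatial correlations \(h\) are non-summable under \eqref{def:h}. For the latter we will split \(h(z)\) into \(|z|\le T_N\) and \(|z|>T_N\). For \(|z|\le T_N\) we use \(\sum_{|z|\le T_N}h\lesssim (\log N)^{a+1}\), which is absorbed into \(C^{k_N}\) since \(k_N\gg\log\log N\). For \(|z|>T_N\) we use \(h(z)\le (\log N)^a/(N\log N)\), which is small enough to be absorbed directly. I would try first to treat the case \(|A\cap C|=|B\cap C|=1\) on distinct slices explicitly, then bound general configurations by induction on the number of shared slices.
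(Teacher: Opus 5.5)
Your initial reduction is sound. Since \(\EE[\xi_{n,x}]=0\), the time indices of \(C\) must lie among those of \(A\cup B\). The term \(C=\emptyset\) then gives exactly \(\EE[X_{n_1,x_1}(f)X_{n_2,x_2}(f)]\), hence \(\Var(X_N(f))\). The subtracted product of size-biased means can simply be dropped, because each factor equals \(\sum_{y}\EE[X_{n,x}(f)X_{n,y}(f)]\ge 0\). You will also need that all mixed terms \(\EE[\xi(A)\xi(B)\xi(C)]\) are non-negative (e.g.\ by Harris--FKG) before you can enlarge sums.

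The gap is everything after that: steps (i)--(iii) are heuristics, not an argument. The mixed terms are genuinely three-replica sums. The sets \(A\), \(B\), \(C\) (with \(|C|\le 2k_N+2\)) can interleave over many time slices, each slice coupling two or three of them through \(h_N\), and at scale \(M_N\) the inverse temperature \(\beta_N\) is super-critical. Claim~\ref{claim:sup-V} only controls the two-replica truncated covariance \(\cV^{\beta_N}_{m,\le k}\). You have no three-replica analogue giving a \(C^{k_N}\) bound, and ``induction on the number of shared slices'' does not supply one, since the number of interleaving patterns grows with \(k_N\). Likewise, the gain \((M_N/\rho N)^{1/2}\) ``via a Cauchy--Schwarz between the two copies'' is asserted, not derived. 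Under \eqref{def:h}, a factor \((\log N)^{a+1}\) paid per slice cannot be absorbed into \(C^{k_N}\). It is harmless only because each slice also carries \(\beta_N^2\) and a time sum of order \(\log N\), with \(\beta_N^2\log N\sum_{|z|\le T_N}h(z)=O(1)\).

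The idea you are missing is to bypass this combinatorics entirely.
\begin{itemize}
\item Group \(n_1,n_2\) into blocks \(\llb jM_N+1,(j+3)M_N\rrb\); overcounting is harmless by positivity.
\item Enlarge the mixed sum to all \(A_1,A_2,B\subseteq\llb jM_N+1,(j+4)M_N\rrb\times\Z^2\) with \(1\le|A_1|,|A_2|,|B|\le 2k_N+2\). This is exactly the third moment \(\EE\big[\overline{W}^{\beta_N}_{(jM_N,(j+4)M_N],\le 2k_N+2}(f)^3\big]\) of a block-restricted truncated chaos.
\item Apply Gaussian hypercontractivity: the Mehler transform \(\mathfrak{M}_r\) with \(r=1/\sqrt2\) maps \(L^2\) to \(L^3\) with norm \(1\), and \(\mathfrak{M}_r\xi^{(\beta)}_{n,x}=\xi^{(r\beta)}_{n,x}\). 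This bounds the third moment by the \(3/2\)-power of the second moment of the same chaos at inverse temperature \(\sqrt2\beta_N\).
\item That second moment is a two-replica quantity. It is at most \(C^{2k_N+2}/j\), using \(q^{(f)}_n(x)\le C/n\) with \(n\ge jM_N\), Claim~\ref{claim:sup-V}, and \(\beta_N^2\sum_{|z|\le T_N}h(z)=O(1)\).
\item Summing \((C^{2k_N+2}/j)^{3/2}\) over \(j\ge\rho N/M_N\) gives \(C^{3k_N}(\rho N/M_N)^{-1/2}\).
\end{itemize}
So the square root comes from the exponent \(3/2\) combined with the sum over blocks, not from a Cauchy--Schwarz between \(A\) and \(B\).
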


Combined with the lower bound~\eqref{eq:lower-tEE} and Claim~\ref{claim:lower-bound} (from which we get that \(\tEE_f[X_N(f)]^2 \geq 1\) for \(N\) sufficiently large), Lemma~\ref{lem:diagonal-Var} shows that
\begin{equation}
  \label{key-3}
  \sup_{f\in \mathcal{M}_1(\sqrt{\rho N})}  \frac{\overline{\mathrm{Diag}}_{f}}{\tEE_f[X_N(f)]^2} \leq  C^{k_N} \rho^{-1/2} \Big(\frac{N}{M_N}\Big)^{-1/2}  = N^{o(1)} N^{-(1-\gamma)/2} \,.
\end{equation}
For the last identity, we have used that \(\rho = N^{o(1)}\) and \(C^{k_N} = N^{o(1)}\) since \(k_N = (\log\log N)^2 \ll \log N\), and also the definition~\eqref{def:MN} of \(M_N=N^{\gamma}\).
Since \(\gamma <1\), the term~\eqref{key-3} goes to~\(0\).
Note that the diagonal term \(\mathrm{Diag}_f\) is actually equal to \(\overline{\mathrm{Diag}}_{f}+\Var(X_N(f))\), so the diagonal contribution to the second term in~\eqref{eq:change-of-measure-2} also contains a term \(\Var(X_N(f))/\tEE[X_N(f)]^2\), but this is treated as in~\eqref{key-1}.

We now turn to the off-diagonal term (its proof is the most technical part of the article). 
Again, let \(T_N\) be defined in Claim~\ref{claim:ratio} and let us introduce
\begin{equation}
  \label{eq:epsilon-N}
  \gep_N = \gep_N(\rho) \coloneqq  \frac{\sum_{ \sqrt{\rho N}\leq |z| \leq T_N} h(z)\mathcal{V}_{M_N,\leq k_N}^{\beta_N}(z)}{\sum_{|z| \leq T_N} h(z)\mathcal{V}_{M_N,\leq k_N}^{\beta_N}(z)}  \,,
\end{equation}
for which Claim~\ref{claim:ratio} shows that \(\lim_{N\to\infty}\gep_N =0\), since \(\sqrt{\rho N} = N^{\frac12+o(1)}\).

\begin{lemma}[Off-diagonal contribution]
  \label{lem:off-diagonal-Var}
  There is a constant \(C>0\) such that 
  \[
   \sup_{f\in \cM_1(\sqrt{\rho N})}  \mathrm{Off\text{-}Diag}_{f} \leq \Big( \gep_N+ \log\Big(\frac{2}{\rho}\Big)^{-1} \Big)\, \bigg( \log \Big(\frac{2}{\rho}\Big) \,\beta_N^2 \sum_{z\in \Zeven^2, |z|\leq T_N} h(z)\mathcal{V}_{M_N,\leq k_N}^{\beta_N}(z) \bigg)^2 \,.
  \]
\end{lemma}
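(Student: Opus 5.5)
The natural route is to compute the size-biased covariance $\tCC_f(X_{n_1,x_1}(f),X_{n_2,x_2}(f))$ exactly, using the chaos structure. For $|n_1-n_2|>2M_N$ the supports of $X_{n_1,x_1}$ and $X_{n_2,x_2}$ lie in disjoint time blocks. Since $\tEE_f[Y]=\EE[W_N^{\beta_N}(f)Y]$, the covariance splits as
\[
\tEE_f[X_{n_1,x_1}X_{n_2,x_2}]-\tEE_f[X_{n_1,x_1}]\,\tEE_f[X_{n_2,x_2}]
=\sum_{A\in\mathcal I_{n_1,x_1},\,B\in\mathcal I_{n_2,x_2}} q^{(f)}(A)q^{(f)}(B)\Big(\EE[W_N^{\beta_N}(f)\xi(A)\xi(B)]-\EE[W_N^{\beta_N}(f)\xi(A)]\EE[W_N^{\beta_N}(f)\xi(B)]\Big).
\]
The product $\xi(A)\xi(B)$ is a single chaos term, because time indices of $A$ and $B$ do not overlap. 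So, by \eqref{eq:cov-xi}, $\EE[W_N^{\beta_N}(f)\xi(A)\xi(B)]=\sum_{C,D} q^{(f)}(C\cup D)\,\EE[\xi(C)\xi(A)]\,\EE[\xi(D)\xi(B)]$, where $C,D$ have the same time indices as $A,B$.

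The first step is the Markov property of the random walk. It factorises $q^{(x)}(C\cup D)=q^{(x)}(C)\,q^{(\mathrm{end}(C))}(D)$. The covariance then becomes an average of $\bigl(q^{(\mathrm{end}(C))}(D)-q^{(f)}(D)\bigr)$ against nonnegative weights. Summing over the internal points, each weight is bounded by $\beta_N^2 h(\cdot)\,\mathcal V_{M_N,\le k_N}^{\beta_N}(\cdot)$ times heat-kernel factors, in the same way the exact computation in Lemma~\ref{lem:tildeEX} yields $\tEE_f[X_N(f)]=\beta_N^2\sum q\,h\,\mathcal V$. Summing over $x_1,x_2$ and $n_1,n_2\in[\rho N,\frac12N]$, I expect the bound
\[
\mathrm{Off\text{-}Diag}_f\le \Big(\beta_N^2\sum_{|z|\le T_N}h(z)\mathcal V(z)\Big)^2\sum_{n_1<n_2}\frac{1}{n_1}\,\Delta_{n_1,n_2},
\]
where $\Delta$ measures how much the law of $S_{n_2}$ changes when it is conditioned on the position of $S$ near time $n_1$. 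Here Claim~\ref{claim:sup-V} and the bound $h_N\le Ch$ take care of constants. The sums over $|z|>T_N$ (in the non-summable case) are negligible by the same split as in Proposition~\ref{prop:boundvariance}.

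The second step, which I expect to be the main obstacle, is the decorrelation estimate. There are two sources of error. The first is the spatial displacement $z=x-y$ between the two replicas created by the correlations. When $|z|\ge\sqrt{\rho N}$ this displacement is not small, and its contribution is directly bounded by the proportion $\gep_N$ from \eqref{eq:epsilon-N}. For the remaining part, with $|z|\le\sqrt{\rho N}$ and a walk displacement of order $\sqrt{M_N}\ll\sqrt{\rho N}$, the local limit theorem lets me compare $q_{n_2-n_1}(\cdot)$ at points differing by $o(\sqrt{n_2-n_1})$ only when $n_2-n_1\gg\rho N$. This gives a relative error $o(1)$, which is absorbed into $\gep_N$ or into $\log(2/\rho)^{-1}$.

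The second source is the pairs with $n_1\le n_2\le Kn_1$ and $n_2-n_1$ not large. A crude bound there uses a total-variation term of order one. This still has to be weighed against the $\frac{1}{n_1}\frac{1}{n_2}$-type density that produces the $\log(2/\rho)^2$ in the squared size-biased expectation: the range $n_2/n_1\le K$ contributes only $\log(2/\rho)\log K$ instead of $\log(2/\rho)^2$. Taking $K$ fixed and then optimising gives the factor $\log(2/\rho)^{-1}$. Making these heat-kernel comparisons uniform over $f\in\mathcal M_1(\sqrt{\rho N})$, uniform over chaos lengths up to $k_N$, and uniform over the correlation displacements is the technical core. I would handle it by first summing over the internal points of $A$ and $B$ to reduce to one-step kernels, and only then applying the local limit theorem.
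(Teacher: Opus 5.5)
Your setup matches the paper's. You expand the size-biased covariance in chaos, factorise $q^{(f)}(B_1\cup B_2)$ by the Markov property, note that the weights are nonnegative, and handle $\sqrt{\rho N}<|z|\le T_N$ through $\gep_N$.

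\textbf{The gap: the decorrelation step is not uniform in $v_1$.} After the internal points of $A_2,B_2$ are summed out, the quantity to bound is $\mathbf{S}_{n_2}(z)=\sum_{y_2}q^{(f)}_{n_2}(y_2+z)\big(q_{n_2-\ell_1}(y_2-v_1)-q^{(f)}_{n_2}(y_2)\big)$. It depends on the last point $(\ell_1,v_1)$ of $B_1$. So your $\Delta_{n_1,n_2}$ must either be uniform in $v_1$, or be an average against the weights $q^{(f)}(A_1)q^{(f)}(B_1)\EE[\xi(A_1)\xi(B_1)]$.

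These weights are not normalised and are tilted by the super-critical $\mathcal V^{\beta_N}_{M_N,\le k_N}$. You give no control of the spatial spread of $v_1$ under them. ``Walk displacement of order $\sqrt{M_N}$'' and the implicit $|v_1|=o(\sqrt{n_2})$ are typical-path heuristics. Also, the relevant shift is $|v_1|\approx\sqrt{n_1}\ge\sqrt{\rho N}$, so the condition is $n_2\gg n_1$, not $n_2-n_1\gg\rho N$.

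A two-sided (total-variation) comparison of $q_{n_2-\ell_1}(\cdot-v_1)$ with $q^{(f)}_{n_2}$ is of order one once $|v_1|\gtrsim\sqrt{n_2}$. So it gives no decay uniformly in $v_1$. Separately, your $K$-split with $K$ fixed gives a relative error $\delta(K)+\log K/\log(2/\rho)$, which does not tend to $0$. To get $\log(2/\rho)^{-1}$ you need a decay in $n_1/n_2$ that is summable against $1/n_2$.

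\textbf{The missing idea.} Sum $y_2$ against the entry kernel $q^{(f)}_{n_2}(y_2+z)$ of $A_2$ and use Chapman--Kolmogorov. This gives $\mathbf{S}_{n_2}(z)=q^{(f)}_{2n_2-\ell_1}(z-v_1)-q^{(f\ast\tilde f)}_{2n_2}(z)$. Since all weights are nonnegative, only an upper bound is needed.
\begin{itemize}
\item Bound the first kernel by its maximum $\frac{2}{\pi(2n_2-\ell_1)}+O(n_2^{-2})$, which removes $v_1$ entirely.
\item Bound the second from below by $\frac{1-9\rho}{\pi n_2}+O(n_2^{-2})$ for $|z|\le\sqrt{\rho N}$.
\end{itemize}
Together these give $\mathbf{S}_{n_2}(z)\le C\frac{n_1}{n_2^2}+C\frac{\rho}{n_2}$, as in~\eqref{eq:z<rho}. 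Multiplied by the $\frac{C}{n_1}$ from the $(A_1,B_1)$-sum, the first term leads to $\sum_{n_1<n_2}n_2^{-2}\le c\log(2/\rho)$. That is exactly the factor $\log(2/\rho)^{-1}$, with no $K$-split. The $\rho$-term is absorbed since $\rho\le\log(2/\rho)^{-1}$.

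\textbf{A second gap: the region $|z|>T_N$.} This is not ``the same split as in Proposition~\ref{prop:boundvariance}''. There the covariance factor was $O(1)$, whereas here Claim~\ref{claim:sup-V} only gives $\mathcal V\le C^{k_N}$. The resulting relative error $C^{k_N}(\log N)^{a-1}/\sum_{|z|\le T_N}h\,\mathcal V$ need not be small.

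The paper instead gains $N^{-c}$ from a large deviation bound on $|S_{2n_2-\ell_1}-v_1|>T_N$. This again requires $|v_1|\le\tfrac12 T_N$. The remaining weights are controlled by the separate estimate~\eqref{bound-sum>TN}, via comparison with a sub-critical $\tilde\beta_N$, Cauchy--Schwarz and large deviations. Alternatively, the $z$-uniform bound $\mathcal V(z)\le 2+C\beta_N^2\log N\sum_{|z'|\le T_N}h_N(z')\mathcal V(z')$ from the proof of Claim~\ref{claim:ratio} also closes this case.
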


Therefore, combined with the lower bound~\eqref{eq:lower-tEE}, this lemma shows that
\begin{equation}
  \label{key-2}
  \sup_{f\in \mathcal{M}_1(\sqrt{\rho N})} \frac{\mathrm{Off\text{-}Diag}_{f}}{\tEE_f[X_N(f)]^2} \leq C \Big( \gep_N + \log \Big(\frac{2}{\rho}\Big)^{-1} \Big)  \,.
\end{equation}
This term also goes to \(0\) since \(\rho \downarrow 0\) and \(\lim_{N\to\infty}\gep_N =0\).

\smallskip
All together, combining the estimates~\eqref{key-1}-\eqref{key-2}-\eqref{key-3} yields that the right-hand side of~\eqref{eq:change-of-measure-2} goes to \(0\) uniformly in \(f\in\mathcal{M}_1(\sqrt{N})\).
This concludes the proof of Theorem~\ref{thm:key-theorem}.
\qed

\section{Proof of the variance(s) estimates}
\label{sec:proof-variance}

In this section, we prove the bounds on \(\tEE_f[X_N(f)]\), \(\Var(X_N(f))\) from Lemma~\ref{lem:tildeEX} and then we turn to the estimates on \(\tVV_f(X_N(f))\) from Lemmas~\ref{lem:diagonal-Var} and~\ref{lem:off-diagonal-Var}.

\subsection{Proof of Lemma~\ref{lem:tildeEX}}

Before we start the proofs, let us introduce some notation and make a few observations.
For \(A\subseteq \N\times \Z^2\) of the form \(\{(n_1,x_1), \ldots (n_k,x_k)\}\) with \(n_1<\cdots <n_k\), we denote by \(\hat{A} = A-(n_1,0)\) the translated set (with the first point removed), that is
\[
\hat{A} \coloneqq \{ (n_2-n_1,x_2), \ldots (n_k-n_1,x_k) \} \,.
\]
We then have \(q^{(f)}(A) = q^{(f)}_{n_1}(x_1) q^{(x_1)}(\hat{A})\), \textit{i.e.}\ we decompose \(q^{(f)}(A)\) into a first transition kernel \(q_{n_1}^{(f)}(x_1) = \sum_{x\in \Z^2} f(x) q_{n_1}(x_1-x)\) and then some kernel \(q^{(x_1)}(\hat A)\) ``internal to \(A\)''.

Recall also that, by time-independence, \(\EE[\xi(A)\xi(B)]=0\) except if \(A,B\) share the same time-indices.
If \(A,B\) share the same time-indices (in particular \(|A|=|B|\)) and are non-empty, we can write, by time-independence and translation invariance
\begin{equation}
  \label{eq:correl-decomp}
  \EE[\xi(A) \xi(B)] = \EE\big[ \xi_{n_1,x_1} \xi_{n_1,y_1}\big] \, \EE\big[\xi(\hat{A}) \xi(\hat{B})\big] = \beta_N^2 h_N(x_1-y_1)\, \EE\big[\xi(\hat{A}) \xi(\hat{B})\big] \,,
\end{equation}
where \((n_1,x_1)=\mathrm{start}(A)\) and \((n_1,y_1) = \mathrm{start}(B)\) and we recall that \(h_N(z)\coloneqq \beta_N^{-2} (\e^{\beta_N h(z)}-1)\).

With the above observations, we notice that \(\EE[X_{n,x}(f)X_{n',x'}(f)]=0\) if \(n\neq n'\).
When, \(n=n'\), using \eqref{eq:correl-decomp}, we have that 
\begin{equation*}
  \EE[X_{n,x}(f)X_{n,y}(f)] = \sum_{A \in \mathcal{I}_{n,x},B\in \mathcal{I}_{n,y}} q_n^{(f)}(x) q_n^{(f)}(y)\, \beta_N^2 h_N(x-y)\, q^{(x)}(\hat{A}) q^{(y)}(\hat{B}) \EE\big[ \xi(\hat A) \xi(\hat{B}) \big] \,.
\end{equation*}
Recalling the definition~\eqref{def:V} of \(\mathcal{V}_{m,\leq k}^{\beta}\) and that of \(\mathcal{I}_{n,x}\), by translation invariance we obtain
\[
\sum_{A \in \mathcal{I}_{n,x},B\in \mathcal{I}_{n,y}}  q^{(x)}(\hat{A}) q^{(y)}(\hat{B}) \EE\big[ \xi(\hat A) \xi(\hat{B}) \big] = \mathcal{V}_{M_N,\leq k_N}^{\beta_N} (x-y) \,.
\]
We thus end up with
\begin{equation}
  \label{eq:correl-Xnx}
  \EE[X_{n,x}(f)X_{n,y}(f)] = q_n^{(f)}(x) q_n^{(f)}(y) \, \beta_N^2\, h_N(x-y) \, \mathcal{V}_{M_N,\leq k_N}^{\beta_N} (x-y) \,.
\end{equation}
Note again here that, because of parity issues the above is non-zero only if \(x-y \in \Zeven^2\) (recall that~\(f\) has support in \(\Zeven^2\)).

\begin{remark}
  In the i.i.d.\ setting, this simplifies to \(\EE[X_{n,x}(f)X_{n,y}(f)] = 0\) if \(x\neq y\) and to \(\EE[X_{n,x}(f)^2] = q_n^{(f)}(x)^2 \beta_N^{2} \mathcal{V}_{M_N,\leq k_N}^{\beta_N}(0)\) if \(x=y\).
\end{remark}

\begin{proof}[Proof of Lemma~\ref{lem:tildeEX}]
Using the chaos decomposition~\eqref{def:chaos-expansion} of \(W_N^{\beta}(f)\), we get that 
\begin{equation}
  \label{eq:tiltedE-1}
  \begin{split}
    \tEE_f[X_{n,x}(f)] = \EE[X_{n,x}(f) W_N^{\beta_N}(f)]& = \sum_{A\in \mathcal{I}_{n,x}} \sum_{y\in\Z^2} \sum_{ B \in \mathcal{I}_{n,y}} q^{(f)}(A) q^{(f)}(B) \EE[\xi(A) \xi(B)] \\
    & = \sum_{y \in \Z^2}  \EE[X_{n,x}(f) X_{n,y}(f)] \,.
  \end{split}
\end{equation}
By linearity of the expectation and recalling the definition~\eqref{def:proxy} of \(X_{N}(f)\), we thus get that 
\begin{equation}
  \label{eq:variance}
  \tEE_f[X_{N}(f)] = \sum_{n=\rho N}^{\frac12 N} \sum_{x,y\in \Z^2} \EE[X_{n,x}(f)X_{n,y}(f)] = \Var(X_N(f)) \,,
\end{equation}
the last identity simply coming the fact that \(\EE[X_{n,x}(f)X_{n',x'}(f)]=0\) when \(n\neq n'\).

Now, using~\eqref{eq:correl-Xnx}, we get that  
\begin{equation}
  \label{eq:XnxXny}
  \sum_{x,y\in \Z^2} \EE[X_{n,x}(f)X_{n,y}(f)] =  \sum_{ x, y\in \Z^2} q^{(f)}_n(x) q^{(f)}_n(y) \,\beta_N^2 h_N(x-y) \mathcal{V}_{M_N,\leq k_N}^{\beta_N} (x-y) \,,
\end{equation}
and we now obtain upper and lower bounds on this sum.

\begin{remark}
  Again, the proof is almost trivial in the i.i.d.\ setting. Indeed, the sum is equal to \(\beta_N^2 \mathcal{V}_{M_N,\leq k_N}^{\beta_N} (0)\) times \(\sum_{x\in \Z^2} q^{(f)}_n(x)^2 = q^{(f\ast \tilde{f})}_n(0)\) with \(\tilde f(x) = f(-x)\), by the Chapman--Kolmogorov property.
  Then, \(q^{(f\ast \tilde{f})}_n(0)\) is easily estimated by the local limit theorem.
\end{remark}

\smallskip
\textbullet\
For the upper bound, we first split the sum according to wether \(|x-y|\leq T_N\) or \(|x-y|>T_N\).
In the first case, by the local limit theorem, we can bound \(q^{(f)}_n(x)\leq \frac{C}{n}\) uniformly for \(x\in \Z^2\).
Hence, summing over \(x\in \Z^2\) with \(|x-y|\leq T_N\) (and \(x-y\in \Zeven^2\)) and then summing \(q^{(f)}_n(y)\) over \(y\), we get that
\begin{equation*}
  \sum_{x,y\in \Z^2, |x-y|\leq T_N} \EE[X_{n,x}(f)X_{n,y}(f)] \leq   \frac{C}{n}  \, \Big(\beta_N^2  \sum_{z\in \Zeven^2, |z|\leq T_N} h(z) \mathcal{V}_{M_N,\leq k_N}^{\beta_N}(z) \Big) \,,
\end{equation*}
having also used that \(h_N(z)\leq C h(z)\).

On the other hand, note that the sum \(|x-y|> T_N\) is empty in the summable case~\eqref{finite-sum}, because then \(T_N=+\infty\).
We thus focus on the non-summable case~\eqref{def:h}, where \(T_N =\sqrt{N \log N}\).
We simply bound \(\mathcal{V}_{M_N,\leq k_N}^{\beta_N}(x-y) \leq C^{k_N}\) thanks to~\eqref{claim:sup-V} and also \(h_N(x-y) \leq C (\log N)^{a-1}/N\) for \(|x-y|> T_N\) by the asymptotic~\eqref{def:h}.
Summing over \(x,y\), we thus get from~\eqref{eq:XnxXny} that 
\begin{equation*}
  \sum_{x,y\in \Z^2, |x-y|> T_N} \EE[X_{n,x}(f)X_{n,y}(f)] \leq C \beta_N^2 \frac{C^{k_N} (\log N)^{a-1}}{N} \bP_f^{\otimes 2} \big( |S_n^{(1)} -S_n^{(2)}| > T_N \big) \,,
\end{equation*}
where \(\bP_f^{\otimes 2}\) denotes the law of two independent random walks with law \(\bP_f \coloneqq \sum_{x\in \Z^2} f(x) \bP_x\).
Now, since \(T_N =\sqrt{N \log N}\) and since the starting points of \(S^{(1)}\) and \(S^{(2)}\) are at distance at most \(2\sqrt{\rho N} \leq 2\sqrt{N}\) (recall that \(f\in \mathcal{M}_1(\sqrt{\rho N})\)), standard large deviations yield that
\[
\bP_f^{\otimes 2} \big( |S_n^{(1)} -S_n^{(2)}| > T_N \big)\leq \e^{-c \log N}  = N^{-c} \,,
\]
uniformly for \(n\leq N\).
We thus end up with 
\begin{equation*}
  \begin{split}
    \sum_{x,y\in \Z^2, |x-y|> T_N} \EE[X_{n,x}(f)X_{n,y}(f)] & \leq \frac{C}{N} \beta_N^2 C^{k_N} (\log N)^{a-1} N^{-c}  \\
    & \leq \frac{C}{n} \beta_N^2 \sum_{z\in \Zeven^2, |z|\leq T_N} h(z) \mathcal{V}_{M_N,\leq k_N}^{\beta_N}(z)\,,
  \end{split}
\end{equation*}
where the last inequality holds for \(N\) large enough, since \(C^{k_N} (\log N)^{a-1} N^{-c} \to 0\), recalling that \(k_N \ll \log N\); we have also used that \(\sum_{|z|\leq T_N} h(z) \mathcal{V}_{M_N,\leq k_N}^{\beta_N}(z) \geq 1\) and that \(n\leq N\).

All together, either in the summable or non-summable case, we have obtained that
\begin{equation}
  \label{eq:upper-sumXnxy}
  \sum_{x,y\in \Z^2} \EE[X_{n,x}(f)X_{n,y}(f)] \leq   \frac{C}{n}  \, \Big(\beta_N^2  \sum_{z\in \Zeven^2, |z|\leq T_N} h(z) \mathcal{V}_{M_N,\leq k_N}^{\beta_N}(z) \Big) \,,
\end{equation}

\smallskip
\textbullet\
For the lower bound, we restrict the sum over \(x,y\) in~\eqref{eq:XnxXny} to \(|x|,|x-y| \leq \sqrt{\rho N}\) so in particular \(|y|\leq 2\sqrt{n}\).
By the local limit theorem, we have that  \(q_n^{(f)}(x)\), \(q_n^{(f)}(y)\geq \frac{c}{n}\) uniformly for such \(x,y\) with \(|x|_1,|y|_1\) having the same parity as \(n\) (in particular \(x-y\in \Zeven^2\)), recalling that \(f\in \mathcal{M}_1(\sqrt{\rho N})\) and \(n \geq \rho N\).
All together, we get that
\begin{equation}
  \label{eq:lower-sumXnxy}
  \begin{split}
    \sum_{x,y\in \Z^2} \EE[X_{n,x}(f)X_{n,y}(f)] & \geq \sum_{|x|\leq \sqrt{n}}  \frac{c^2}{n^2}  \, \Big(\beta_N^2  \sum_{z\in \Zeven^2, |z|\leq \sqrt{\rho N}} h_N(z) \mathcal{V}_{M_N,\leq k_N}^{\beta_N}(z) \Big) \\
    & \geq \frac{c'}{n}  \, \Big(\beta_N^2  \sum_{z\in \Zeven^2, |z|\leq T_N} h(z) \mathcal{V}_{M_N,\leq k_N}^{\beta_N}(z) \Big) \,,
  \end{split}
\end{equation}
where we have also used Claim~\ref{claim:ratio} for the second inequality (recall we choose \(\rho=N^{o(1)}\)); we have also used that \(h_N(z)\geq h(z)\).

\smallskip
Now, plugging~\eqref{eq:upper-sumXnxy} or~\eqref{eq:lower-sumXnxy} back in \eqref{eq:variance}, the sum \(\sum_{n=\rho N}^{\frac12 N} \frac{1}{n}\) gives a factor \(\log(\frac2\rho)\), which concludes the proof of \eqref{eq:upper-variance},~\eqref{eq:lower-tEE}.
\end{proof}

\subsection{Diagonal contribution to the sized-biased variance: Proof of Lemma~\ref{lem:diagonal-Var}}

The first step is to show that for \(j \geq 0\), one can compare the sum of the covariances (near the diagonal with \(n_1,n_2 \in \llb j M_N +1, (j+3) M_N \rrb\)) with the third moment of some chaos expansion.

Let us denote 
\begin{equation}
  \label{eq:chaos-Wj}
  \overline{W}_{ (a,b], \leq k}^{\beta_N} (f)\coloneqq \sum_{A \subseteq \llb a+1, b \rrb \times \Z^2,\,  1\leq |A|\leq k} q^{(f)}(A) \xi(A) \,.
\end{equation}
Then, for any \(j\geq 0\), let us prove that
\begin{equation}
  \label{eq:third-moment}
  \begin{split}
    \sum_{n_1,n_2 = jM_N+1}^{(j+3)M_N} \sum_{x_1,x_2\in \Z^2}  \Big(\tEE_f\big[X_{n_1,x_1}(f) X_{n_2,x_2}(f) \big]&- \EE\big[X_{n_1,x_1}(f) X_{n_2,x_2}(f) \big]\Big)\\
    &  \leq  \EE\Big[ \overline{W}_{(j M_N,(j+4) M_N], \leq 2 k_N+2}^{\beta_N} (f)^3  \Big] \,.
  \end{split}
\end{equation}

\begin{proof}[Proof of~\eqref{eq:third-moment}]
  First, let us expand \(\tEE_f[X_{n_1,x_1}(f)X_{n_2,x_2}(f)] = \EE[X_{n_1,x_1}(f)X_{n_2,x_2}(f)W_N^{\beta}(f)]\) via the chaos expansion, see~\eqref{def:chaos-expansion}.
  Using that \(\EE[\xi(A_1)\xi(A_2)\xi(B)] =0\) if time indices of \(B\) are not included in the time-indices of \(A_1\cup A_2\), we obtain that 
  \[
  \begin{split}
      \tEE_f \big[X_{n_1,x_1}(f)& X_{n_2,x_2}(f) \big] =   \EE[X_{n_1,x_1}(f) X_{n_2,x_2}(f)]   \\
      & + \sumtwo{A_1\in \mathcal{I}_{n_1,x_1}}{A_2 \in \mathcal{I}_{n_2,x_2}} \sumtwo{B \subseteq \llb j M_N+1, (j+4) M_N \rrb \times \Z^2}{1\leq |B|\leq |A_1|+|A_2| \leq 2k_N+2}  q^{(f)}(A_1)q^{(f)}(A_2) q^{(f)}(B) \EE\big[ \xi(A_1)\xi(A_2) \xi(B)\big] \,,
  \end{split}
  \]
  where the first term accounts for the contribution of \(B =\emptyset\).

  Now, let us stress that all terms in the last sum are non-negative, since \(\EE[\xi_{n,x} \xi_{n,y} \xi_{n,z}]\geq 0\), either by a direct calculation\footnote{Is is equal to \(\e^{\beta_N^2 (h(x-y) + h(y-z) +h(z-x))} - \e^{\beta_N^2 h(x-y)} - \e^{\beta_N^2 h(y-z)}- \e^{\beta_N^2 h(z-x)} +2\), which can be checked to be non-negative.} or by using the Harris--FKG inequality since by Assumption~\ref{main-assumption} the \(\xi_{n,x}\) are non-decreasing functions of i.i.d.\ random variables.
  We now sum over \(n_1,n_2\in \llb j M_N+1, (j+4) M_N \rrb\) and $x_1,x_2\in\Z^2$.
  We can then enlarge the sum to all sets \(A_1,A_2 \subseteq \llb j M_N+1, (j+4) M_N \rrb \times \Z^2 \) keeping only the restriction \(|A_1|,|A_2|,|B|\leq 2k_N+2\) to get an upper bound.
  In the end, we obtain that 
  \begin{equation*}
    \begin{split}
     \sum_{n_1,n_2 = j M_N+1}^{(j+4)M_N} & \sum_{x_1,x_2\in \Z^2}   \Big(\tEE_f\big[X_{n_1,x_1}(f) X_{n_2,x_2}(f) \big]- \EE\big[X_{n_1,x_1}(f) X_{n_2,x_2}(f) \big]\Big) \\
      &\qquad  \leq  \sumtwo{A_1,A_2,B \subseteq \llb j M_N+1, (j+4) M_N \rrb \times \Z^2}{1\leq |A_1|,|A_2|,|B|\leq 2 k_N+2}  q^{(f)}(A_1)q^{(f)}(A_2) q^{(f)}(B) \EE\big[ \xi(A_1)\xi(A_2) \xi(B)\big]\,.
     \end{split}
  \end{equation*}
  This proves the claim, since the right-hand side is exactly the desired third moment.
\end{proof}

\begin{proof}[Proof of Lemma~\ref{lem:diagonal-Var}]
Simply bounding \(\tCC_f(X_{n_1,x_1}(f),X_{n_2,x_2}(f) \big)\leq \tEE_f[X_{n_1,x_1}(f)X_{n_2,x_2}(f)]\), we have that 
\[
\begin{split}
  \mathrm{Diag}_{f} & \leq  \sumtwo{n_1,n_2=\rho N}{|n_1-n_2| \leq 2 M_N}^{\frac12 N}\sum_{x_1,x_2\in \Z^2}\EE\big[X_{n_1,x_1}(f) X_{n_2,x_2}(f) \big]   \\
  &\qquad + \sumtwo{n_1,n_2=\rho N}{|n_1-n_2| \leq 2 M_N}^{\frac12 N} \sum_{x_1,x_2\in \Z^2} \Big(\tEE_f\big[X_{n_1,x_1}(f),X_{n_2,x_2}(f) \big] -\EE\big[X_{n_1,x_1}(f) X_{n_2,x_2}(f) \big] \Big) \,.
\end{split}
\]
Since \(\EE[X_{n_1,x_1}(f) X_{n_2,x_2}(f)]=0\) for \(n_1\neq n_2\), the first sum is in fact exactly \(\Var(X_N(f))\), see~\eqref{eq:variance}.
For the second sum, regrouping terms into blocks \(n_1,n_2 \in \llb j M_N +1, (j+3) M_N \rrb\) (possibly overcounting terms, note that \(\tEE_f[X_{n_1,x_1}(f),X_{n_2,x_2}(f)] -\EE[X_{n_1,x_1}(f) X_{n_2,x_2}(f)]\) is non-negative as noticed above) and using \eqref{eq:third-moment}, we get that 
\begin{equation}
  \label{eq:bound-Diag-1}
  \overline{\mathrm{Diag}}_{f}\leq \sum_{j= \rho N/M_N}^{N/M_N} \EE\Big[ \overline{W}_{(j M_N,(j+4) M_N], \leq 2 k_N+2}^{\beta_N} (f)^3  \Big]  \,.
\end{equation}

It thus remains to control the third moment of \(\overline{W}_{(j M_N,(j+4) M_N], \leq 2 k_N+2}^{\beta_N} (f)\).
For this, we use the hypercontractivity of Gaussian spaces, see~\cite[Thm.~5.1]{Jan97}. 
More precisely, consider the so-called Mehler transform \(\mathfrak{M}_r: f(\omega) \mapsto  \EE_{\tilde\omega}[f(r \omega + \sqrt{1-r^2} \tilde \omega)]\) for \(r\in (0,1)\), where \(\tilde \omega\) is an independent copy of \(\omega\).
Recalling that \(\xi^{(\beta)}_{n,x} = \e^{\beta \omega_{n,x} -\frac12 \beta^2}-1\), a straightforward calculation shows that \(\mathfrak{M}_r \xi^{(\beta)}_{n,x} = \xi^{(r\beta)}_{n,x}\): in particular, we have that
\[
\overline{W}_{(j M_N,(j+4) M_N], \leq 2 k_N+2}^{\beta_N} (f) =
\mathfrak{M}_{r} \overline{W}_{(j M_N,(j+4) M_N], \leq 2 k_N+2}^{\beta_N/r} (f) \,.
\]
Then, \cite[Thm.~5.8]{Jan97} tells that, for \(r^2\leq \frac{p-1}{q-1}\), \(\mathfrak{M}_r\) is a linear map from \(L^p(\PP)\) to \(L^q(\PP)\) and has norm~\(1\).
Applying this with \(p=2\), \(q=3\) and \(r=1/\sqrt{2}\), we thus obtain that 
\begin{equation}
  \label{eq:hypercontractivity}
  \EE\Big[ \Big|\overline{W}_{(j M_N,(j+4) M_N], \leq 2 k_N+2}^{\beta_N} (f) \Big|^3  \Big] \leq \EE\Big[ \overline{W}_{(j M_N,(j+4) M_N], \leq 2 k_N+2}^{\sqrt{2}\beta_N} (f)^2  \Big]^{3/2} \,,
\end{equation}
so that we have reduced to a second moment estimate.

Now, using the chaos expansion~\eqref{eq:chaos-Wj} and the fact that \(\EE[\xi(A) \xi(B)] =0\) if \(A,B\) do not have the same starting point, similarly to~\eqref{eq:correl-Xnx} we get that
\begin{equation*}
  \begin{split}
    \EE\Big[ \overline{W}_{(j M_N,(j+4) M_N], \leq 2 k_N+2}^{\sqrt{2} \beta_N}& (f)^2  \Big] \\
    & = \sum_{n = j M_N+1}^{(j+4)M_N} \sum_{x,y\in \Z^2} q^{(f)}_n(y) q^{(f)}_n(x) 2 \beta_N^2 h_N(x-y)  \mathcal{V}_{4 M_N,\leq 2k_N+2}^{\sqrt{2}\beta_N}(x-y)\,.
  \end{split}
\end{equation*}
(Here we have replaced \(\beta_N\) by \(\sqrt{2} \beta_N\) in the definition of \(h_N\).)

Then, with the exact same proof as in~\eqref{eq:upper-sumXnxy}, we get that (uniformly for \(f\in \mathcal{M}_1(\sqrt{\rho N})\))
\[
\sum_{x,y\in \Z^2} q^{(f)}_n(y) q^{(f)}_n(x) \beta_N^2 h_N(x-y)  \mathcal{V}_{4 M_N,\leq 2k_N+2}^{\sqrt{2} \beta_N}(x-y) \leq \frac{C}{n} \beta_N^2 \sum_{|z|\leq T_N} h(z) \mathcal{V}_{4 M_N,\leq 2k_N+2}^{\sqrt{2}\beta_N}(z) \,.
\]
Summing over \(n \in \{j M_N+1, \ldots, (j+4)M_N\}\), this gives that
\[
 \EE\Big[ \overline{W}_{(j M_N,(j+4) M_N], \leq 2 k_N+2}^{\beta_N} (f)^2  \Big] \leq \frac{C'}{j} \beta_N^2 \sum_{|z|\leq T_N} h(z)\mathcal{V}_{4 M_N,\leq 2k_N+2}^{\sqrt{2}\beta_N}(z)  \leq \frac{C'}{j} C^{2k_N+2}\,.
\]
For the last inequality, we have used that \(\mathcal{V}_{4 M_N,\leq 2k_N+2}^{\sqrt{2}\beta_N}(z) \leq C^{2k_n+2}\) by Claim~\ref{claim:sup-V}, together with the fact \(\beta_N^2 \sum_{|z|\leq T_N} h(z)\) remains bounded (in fact it goes to \(0\)) --- this is obvious in the summable case~\eqref{finite-sum} and follows from the fact that \(\sum_{|z|\leq T_N} h(z) \sim c (\log N)^{a+1}\) thanks to~\eqref{sum:h}, together with the scaling of \(\beta_N\) from Assumption~\ref{hyp:scaling}.

Going back to~\eqref{eq:hypercontractivity} and plugging this in~\eqref{eq:bound-Diag-1}, we get that 
\[
  \overline{\mathrm{Diag}}_{f} \leq (2C' \, C^{2 k_N+2})^{3/2} \bigg(\sum_{j=\rho N/M_N}^{N/M_N} j^{-3/2} \bigg)  \leq C''\, C^{3 k_N}  \Big(\frac{\rho N}{M}\Big)^{-1/2}\,.
\] 
This concludes the proof of Lemma~\ref{lem:diagonal-Var}, since all constants are uniform in \(f\in \mathcal{M}_1(\sqrt{\rho N})\). 
\end{proof}

\subsection{Off-Diagonal contribution to the sized-biased variance: Proof of Lemma~\ref{lem:off-diagonal-Var}}

Let us consider \(n_1,n_2\) with  \(\rho N\leq n_1 < n_2 \leq \frac12 N\) and \(n_2-n_1 >2 M_N\).
We write
\begin{equation*}
  \begin{split}
    \tCC_f[X_{n_1,x_1}&(f),X_{n_2,x_2}(f)] \\
    & = \EE[ X_{n_1,x_1}(f) X_{n_2,x_2}(f) W_N^{\beta_N}(f) ] -  \EE[X_{n_1,x_1}(f) W_N^{\beta_N}(f) ] \EE[X_{n_2,x_2}(f)W_N^{\beta_N}(f) ] \,.
  \end{split}
\end{equation*}
Then, expanding the chaos and using time-independence (we refer to Figure~\ref{fig:covariances} for an illustration), we get that this is equal to 
\begin{equation}
  \label{eq:covariances-chaos}
  \begin{split}
  \sumtwo{A_1\in \mathcal{I}_{n_1,x_1}}{A_2 \in \mathcal{I}_{n_2,x_2}} \sum_{y_1,y_2\in \Z^2} \sumtwo{B_1 \in \mathcal{I}_{n_1,y_1}}{B_2\in \mathcal{I}_{n_2,y_2}} q^{(f)}(A_1)q^{(f)}(A_2) & \big( q^{(f)}(B_1\cup B_2) - q^{(f)}(B_1) q^{(f)}(B_2) \big)  \\[-10pt]
  & \qquad \qquad \qquad   \EE\big[\xi(A_1) \xi(B_1)\big] \,\EE\big[\xi(A_2) \xi(B_2)\big] \,.
  \end{split}
\end{equation}

\begin{figure}
  \centering
  \begin{tikzpicture}[xscale=1.2, yscale=1, >=stealth]
    \draw[thick, ->] (0, 0) -- (9, 0);
    \draw[thick, ->] (0, 0) -- (0,3);
    \draw[thick] (3, 0.1) -- (3, -0.1) node[below, yshift=-0.1cm] {$n_1$};
    \draw[thick] (4.4, 0.1) -- (4.4, -0.1) node[below, yshift=-0.1cm] {\small $n_1+M_N$};
    \draw[thick] (7.5, 0.1) -- (7.5, -0.1) node[below, yshift=-0.1cm] {$n_2$};
    \draw[thick] (8.7, 0.1) -- (8.7, -0.1) node[below, yshift=-0.1cm] {\small $n_2+M_N$};
    \draw[thick, blue] (0, 0.9) to[out=30, in=180] (3, 2.8);
    \draw[thick] (0, 0.6) to[out=30, in=180] (3, 2.2);
    \draw[thick] (0, 0.2) to[out=0, in=180] (7.5, 1.5);
    \draw[thick, blue] (4.2, 2.8) to[out=0, in=180] (7.5, 2.0);
    \draw[thick] (3, 2.2) -- (3.3, 1.8) -- (3.6, 2.5) -- (3.9, 2.8) -- (4.2, 2.4);
    \draw[thick, blue] (3, 2.8) -- (3.3, 2.6) -- (3.6, 2.2) -- (3.9, 2.5) -- (4.2, 2.8);
    \foreach \p in {(3, 2.2), (3.3, 1.8), (3.6, 2.5), (3.9, 2.8), (4.2, 2.4)}
        \filldraw \p circle (2pt); 
    \foreach \p in {(3, 2.8), (3.3, 2.6), (3.6, 2.2), (3.9, 2.5), (4.2, 2.8)}
        \draw[thick, blue, fill=white] \p circle (2pt);
    \node[above] at (3.3, 2.6) {\blue $B_1$};
    \node[above right, xshift=-0.2cm] at (4.2, 2.8) {\small\blue $(\ell_1,v_1)$};
    \node[below] at (3.3, 1.8) {$A_1$};
    \draw[thick] (7.5, 1.5) -- (7.8, 1.4) -- (8.1, 1.9) -- (8.4, 1.4);
    \draw[thick, blue] (7.5, 2.0) -- (7.8, 1.9) -- (8.1, 1.4) -- (8.4, 2.0);
    \foreach \p in {(7.5, 1.5), (7.8, 1.4), (8.1, 1.9), (8.4, 1.4)}
        \filldraw \p circle (2pt);
        
    \foreach \p in {(7.5, 2.0), (7.8, 1.9), (8.1, 1.4), (8.4, 2.0)}
        \draw[thick, blue, fill=white] \p circle (2pt);
    \node[above] at (8.4, 2.0) {\blue $B_2$};
    \node[above left, xshift=0.2cm] at (7.5, 2.0) {\small\blue $(n_2,y_2)$};
    \node[below left, xshift=0.2cm] at (7.5, 1.5) {\small $(n_2,x_2)$};
    \node[below] at (8.4, 1.4) {$A_2$};
\end{tikzpicture}
\begin{tikzpicture}[xscale=1.2, yscale=1, >=stealth]
    \draw[thick, ->] (0, 0) -- (9, 0);
    \draw[thick, ->] (0, 0) -- (0,3);
    \draw[thick] (3, 0.1) -- (3, -0.1) node[below, yshift=-0.1cm] {$n_1$};
    \draw[thick] (4.4, 0.1) -- (4.4, -0.1) node[below, yshift=-0.1cm] {\small $n_1+M_N$};
    \draw[thick] (7.5, 0.1) -- (7.5, -0.1) node[below, yshift=-0.1cm] {$n_2$};
    \draw[thick] (8.7, 0.1) -- (8.7, -0.1) node[below, yshift=-0.1cm] {\small $n_2+M_N$};
    \draw[thick, blue] (0, 0.9) to[out=30, in=180] (3, 2.8);
    \draw[thick] (0, 0.6) to[out=30, in=180] (3, 2.2);
    \draw[thick] (0, 0.2) to[out=0, in=180] (7.5, 1.5);
    \draw[thick, blue] (0, 0.1) to[out=0, in=180] (7.5, 2.0);
    \draw[thick] (3, 2.2) -- (3.3, 1.8) -- (3.6, 2.5) -- (3.9, 2.8) -- (4.2, 2.4);
    \draw[thick, blue] (3, 2.8) -- (3.3, 2.6) -- (3.6, 2.2) -- (3.9, 2.5) -- (4.2, 2.8);
    \foreach \p in {(3, 2.2), (3.3, 1.8), (3.6, 2.5), (3.9, 2.8), (4.2, 2.4)}
        \filldraw \p circle (2pt); 
    \foreach \p in {(3, 2.8), (3.3, 2.6), (3.6, 2.2), (3.9, 2.5), (4.2, 2.8)}
        \draw[thick, blue, fill=white] \p circle (2pt);
    \node[above] at (3.3, 2.6) {\blue $B_1$};
    \node[above right, xshift=-0.2cm] at (4.2, 2.8) {\small\blue $(\ell_1,v_1)$};
    \node[below left, xshift=0.2cm] at (7.5, 1.5) {\small $(n_2,x_2)$};
    \node[below] at (3.3, 1.8) {$A_1$};
    \draw[thick] (7.5, 1.5) -- (7.8, 1.4) -- (8.1, 1.9) -- (8.4, 1.4);
    \draw[thick, blue] (7.5, 2.0) -- (7.8, 1.9) -- (8.1, 1.4) -- (8.4, 2.0);
    \foreach \p in {(7.5, 1.5), (7.8, 1.4), (8.1, 1.9), (8.4, 1.4)}
        \filldraw \p circle (2pt);
        
    \foreach \p in {(7.5, 2.0), (7.8, 1.9), (8.1, 1.4), (8.4, 2.0)}
        \draw[thick, blue, fill=white] \p circle (2pt);
    \node[above] at (8.4, 2.0) {\blue $B_2$};
    \node[above left, xshift=0.2cm] at (7.5, 2.0) {\small\blue $(n_2,y_2)$};
    \node[below] at (8.4, 1.4) {$A_2$};
\end{tikzpicture}
  \caption{
  Illustration of the expansion~\eqref{eq:covariances-chaos} of \(\tCC_f[X_{n_1,x_1}(f),X_{n_2,x_2}(f)]\). 
  The picture on top represents the decomposition of \(\EE[ X_{n_1,x_1}(f) X_{n_2,x_2}(f) W_N^{\beta_N}(f) ]\): since \(n_2>n_1+M_N\), it can be split into a first contribution between \(n_1\) and \(n_1+M_N\) (with subsets \(A_1,B_1\)) and a second contribution between \(n_2\) and \(n_2+M_N\) (with subsets \(A_2,B_2\)), the terms \(\EE[\xi(A_1) \xi(A_2) \xi(B_1\cup B_2)]\) factorizing to \(\EE[\xi(A_1)\xi(B_1)] \EE[ \xi(A_2) \xi(B_2)]\), by time-independence.
  The picture on the bottom represents the decomposition of \(\EE[ X_{n_1,x_1}(f) W_N^{\beta_N}(f) ] \EE[ X_{n_2,x_2}(f) W_N^{\beta_N}(f)]\), which is the product of a first contribution between \(n_1\) and \(n_1+M_N\) (with subsets \(A_1,B_1\)) and a second contribution between \(n_2\) and \(n_2+M_N\) (with subsets \(A_2,B_2\)).
  Subtracting the two terms give~\eqref{eq:covariances-chaos}.
  }
  \label{fig:covariances}
\end{figure}

Recall that if \(A\) is a non-empty set with starting point \((n_1,x_1)\) we denote by \(\hat A = A -(n_1,0)\) (with the first point being removed), so that in particular \(q^{(f)}(A) = q^{(f)}_{n_1}(x_1) q^{(x_1)}(\hat A)\).
With this notation, we may write 
\[
q^{(f)}(B_1\cup B_2) - q^{(f)}(B_1) q^{(f)}(B_2)
  = q^{(f)}(B_1) \big( q_{n_2-\ell_1}(y_2-v_1) - q^{(f)}_{n_2}(y_2)\big) q^{(y_2)}(\hat{B}_2) \,, 
\]
where we have denoted by \((\ell_1,v_1)\) the last point of \(B_1\) and by \((n_2,y_2)\) the first point of \(B_2\) (one may refer to Figure~\ref{fig:covariances} for an illustration).
Note also that, recalling~\eqref{eq:correl-decomp}, we have that \(\EE[\xi(A_2) \xi(B_2)]= \beta_N^2 h_N(x_2-y_2) \EE[\xi(\hat A_2) \xi(\hat B_2)]\).
Overall, summing over \(\hat{A}_2,\hat{B}_2\), analogously to~\eqref{eq:correl-Xnx} we get that~\eqref{eq:covariances-chaos} is equal to
\begin{equation*}
  \begin{split}
    \sum_{A_1\in \mathcal{I}_{n_1,x_1}} \sum_{y_1\in \Z^2} & \sum_{B_1 \in \mathcal{I}_{n_1,y_1}} 
     q^{(f)}(A_1) q^{(f)}(B_1)\EE[\xi(A_1) \xi(B_1)] \\
    & \sum_{y_2\in \Z^2} q^{(f)}_{n_2}(x_2) \big( q_{n_2-\ell_1}(y_2-v_1) - q^{(f)}_{n_2}(y_2)\big) \beta_N^2 h_N(x_2-y_2) \cV_{M_N,\leq k_N}^{\beta_N}(x_2-y_2) \,.
  \end{split}
\end{equation*}
Summing over \(x_1,x_2\) and using a change of variable\footnote{Note that \(x_2-y_2\in \Zeven^2\) and \(x_1-y_1 \in \Zeven^2\) because of parity issues, but we keep this restriction implicit in the sums, to lighten notation.} \(z=x_2-y_2\), we thus have that
\begin{equation}
  \label{eq:covariance-1}
  \begin{split}
    \sum_{x_1,x_2 \in \Z^2}   \tCC_f[X_{n_1,x_1}(f),X_{n_2,x_2}(f)] 
    = \sum_{x_1,y_1 \in \Z^2}& \sumtwo{A_1\in \mathcal{I}_{n_1,x_1}}{ B_1 \in \mathcal{I}_{n_1,y_1}}  q^{(f)}(A_1) q^{(f)}(B_1)\EE[\xi(A_1) \xi(B_1)] \\
    & \quad  \times \beta_N^2 \sum_{z\in \Zeven^2}  h_N(z) \cV_{M_N,\leq k_N}^{\beta_N}(z) \mathbf{S}_{n_2}(z)\,,
  \end{split}
\end{equation}
where \(\mathbf{S}_{n_2}(z) = \mathbf{S}_{n_2}^{(\ell_1,v_1)}(z)  \coloneqq  \sum_{y_2\in \Z^2} q^{(f)}_{n_2}(y_2+z) ( q_{n_2-\ell_1}(y_2-v_1) - q^{(f)}_{n_2}(y_2))\).
Note that by the Chapman--Kolmogorov property (and the symmetry of the random walk), we have that 
\[
\mathbf{S}_{n_2}(z)= q^{(f)}_{2n_2-\ell_1}(z-v_1) - q^{(f\ast \tilde{f})}_{2n_2}(z) \,,
\]
with the notation \(\tilde{f}(x) =f(-x)\).
We now split the last sum in~\eqref{eq:covariance-1} over~\(z\) into \(|z|\leq \sqrt{\rho N}\), \(\sqrt{\rho N} < |z| \leq T_N\) and \(|z|>T_n\).

\begin{remark}
  Notice that in the i.i.d.\ case we do not need to treat the different cases since the sum is reduced to \(z=0\).
  Then, we can bound \(\mathbf{S}_{n_2}(0) \leq q^{(f)}_{2n_2-\ell_1}(0) - q^{(f\ast \tilde{f})}_{2n_2}(0) \leq C\,\frac{n_1}{(n_2)^2} \), as done in \cite[Eq.~(6.4)]{BCT25}.
\end{remark}

\smallskip
\noindent
\textit{(i) If \(|z|\leq \sqrt{\rho N}\). }
Then, applying the local limit theorem \cite[Thm.~1.2.1]{Law96}, we have on the one hand
\[
q^{(f)}_{2n_2-\ell_1}(z-v_1) = \sum_{x\in \Z^2} f(x) \frac{2}{\pi (2n_2-\ell_1)} \e^{- \frac{|z-v_1-x|^2}{2n_2-\ell_1}}  + O\Big( \frac{1}{(2n_2-\ell_1)^2}\Big) \leq \frac{2}{\pi (2n_2-\ell_1)}  +  O\Big( \frac{1}{(n_2)^2}\Big)\,,
\]
simply bounding the exponentials by \(1\) and using that \(\sum_{x\in \Z^2}f(x)=1\); we have also used that \(2n_2-\ell_1\geq n_2\) for the second term.

On the other hand, we have that 
\[
q^{(f\ast\tilde{f})}_{2n_2}(z) = \sum_{x\in \Z^2} f\ast\tilde{f}(x) \frac{1}{\pi n_2} \e^{- \frac{|z-x|^2}{2n_2}}  + O\Big( \frac{1}{(n_2)^2}\Big) \geq \frac{1 -9\rho}{\pi n_2}  + O\Big( \frac{1}{(n_2)^2}\Big) \,.
\]
For the second inequality, we have used that \(f\ast\tilde f \in \mathcal{M}_1(2\sqrt{\rho N})\) so that \(|z-x|\leq 3\sqrt{\rho N}\) in the sum, and thus \(\exp(- \frac{|z-x|^2}{2n_2}) \geq 1-  \frac{9\rho N}{n_2} \geq 1-9\rho\).
All together, for \(|z|\leq \sqrt{\rho N}\), we get the bound
\begin{equation}
  \label{eq:z<rho}
  \mathbf{S}_{n_2}(z) \leq \frac{2}{\pi} \Big(\frac{1}{2n_2-\ell_1} - \frac{1}{2n_2} \Big) + \frac{9\rho}{\pi n_2} + \frac{C}{(n_2)^2} \leq C\, \frac{n_1}{(n_2)^2} + C\, \frac{\rho}{n_2} \,,
\end{equation}
where we have used that \(\frac{1}{2n_2-\ell_1} - \frac{1}{2n_2} = \frac{\ell_1}{ 2n_2 (2n_2-\ell_1) } \leq \frac{n_1}{(n_2)^2}\) since \(\ell_1 \leq 2n_1\) and \(2n_2-\ell_1 \geq n_2\).
All together, we get that 
\begin{equation}
  \label{sum:z<rho-N}
  \beta_N^2 \sum_{|z|\leq \sqrt{\rho N}}  h_N(z) \cV_{M_N,\leq k_N}^{\beta_N}(z) \mathbf{S}_{n_2}(z) \leq C \Big(\frac{n_1}{(n_2)^2} + \frac{\rho}{n_1} \Big) \,\beta_N^2 \sum_{ |z|\leq T_N}  h_N(z) \cV_{M_N,\leq k_N}^{\beta_N}(z) \,,
\end{equation}
where we have also enlarged the sum as a last step, to get an upper bound.

\smallskip
\noindent
\textit{(ii) If \(\sqrt{\rho N}<|z| \leq T_N\). }
We can then bound \(q^{(f)}_{2n_2-\ell_1}(z-v_1) \leq \frac{C}{2n_2-\ell_1}\) by the local limit theorem, so that 
\begin{equation}
  \label{eq:z>rho}
  \mathbf{S}_{n_2}(z)  \leq \frac{C}{n_2} \,,
\end{equation}
since \(n_2\geq \ell_1\).
Recalling also the definition~\eqref{eq:epsilon-N} of \(\gep_N\), we thus get that 
\begin{equation}
  \label{sum:z<TN}
  \beta_N^2 \sum_{\sqrt{\rho N} < |z|\leq T_N}  h_N(z) \cV_{M_N,\leq k_N}^{\beta_N}(z) \mathbf{S}_{n_2}(z) \leq \frac{C}{n_2}\,\gep_N \, \beta_N^2 \sum_{ |z|\leq T_N}  h_N(z) \cV_{M_N,\leq k_N}^{\beta_N}(z) \,.
\end{equation}

\smallskip
\noindent
\textit{(iii) If \(|z| > T_N\). }
Note that this case is empty in the summable case~\eqref{finite-sum} where we have set \(T_N=+\infty\).
In the non-summable case~\eqref{def:h}, we bound \(\mathcal{V}_{M_N,\leq k_N}(z) \leq C^{k_N}\) by Claim~\ref{claim:sup-V} and also \(h(z) \leq C (\log N)^{a-1}/N\) by~\eqref{def:h}, recalling that \(T_N=\sqrt{N \log N}\).
Bounding also \(\mathbf{S}_{n_2}(z) \leq q^{(f)}_{2n_2-\ell_1}(z-v_1)\), we get that 
\begin{equation*}
  \beta_N^2\sum_{|z|> T_N}  h_N(z) \cV_{M_N,\leq k_N}^{\beta_N}(z) \mathbf{S}_{n_2}(z) \leq C \beta_N^2 \, \frac{(\log N)^{a-1}}{N}\, C^{k_N} \, \bP_f\big( |S_{2n_2-\ell_1} -v_1| >T_N \big)\,.
\end{equation*}
Now we consider two cases: either \(|v_1| \leq \frac12 T_N\), in which case 
\[
  \bP_f\big( |S_{2n_2-\ell_1} -v_1| >T_N \big) \leq \bP_f\big( |S_{2n_2-\ell_1}| > \tfrac12 T_N \big) \leq \e^{-c \log N} = N^{-c} \,,
\]
using also that \(|S_0|\leq \sqrt{\rho N}\) since \(f\in \mathcal{M}_1(\sqrt{\rho N})\) and that \(2n_2-\ell_1 \leq 2N\).
On the other hand, if \(|v_1| > \frac12 T_N\) we simply bound the probability by \(1\).
All together, using that \(C^{k_N} = N^{o(1)}\) because \(k_N \ll \log N\) and that \(\beta_N^2 (\log N)^{a-1} \to 0\) by \eqref{def:beta-N}, we end up with
\begin{equation}
  \label{sum:z>TN}
  \beta_N^2  \sum_{|z|> T_N}  h_N(z) \cV_{M_N,\leq k_N}^{\beta_N}(z) \mathbf{S}_{n_2}(z)  \leq \frac{C}{n_2} N^{-c/2} + \frac{C^{k_N}}{N} \ind_{\{|v_1| >\frac12 T_N\}}.
\end{equation}
having also used that \(\frac{1}{N}\leq \frac{1}{n_2}\).

\smallskip
\noindent
\textit{Conclusion of the proof. }
Notice that the bounds~\eqref{sum:z<rho-N}, \eqref{sum:z<TN} do not depend on \(A_1,B_1\) (or \((\ell_1,v_1)\)) in \eqref{eq:covariance-1} but that \eqref{sum:z>TN} does (at least for its second term).
Notice also that, similarly as in~\eqref{eq:tiltedE-1}
\begin{equation*}
  \begin{split}
    \sum_{x_1,y_1\in \Z^2}\sum_{A_1\in \mathcal{I}_{n_1,x_1}, B_1 \in \mathcal{I}_{n_1,y_1}} 
    q^{(f)}(A_1) q^{(f)}(B_1) & \EE[\xi(A_1) \xi(B_1)] = \sum_{x_1, y_1\in \Z^2}  \EE[X_{n_1,x_1} X_{n_1,y_1}] \\
    & \leq \frac{C}{n_1} \beta_N^2\, \sum_{|z| \leq T_N} h(z) \, \mathcal{V}_{M_N,\leq k_N}^{\beta_N} (z) \,,
  \end{split}
\end{equation*}
applying~\eqref{eq:upper-sumXnxy} for the last inequality.

All together, using the bounds~\eqref{sum:z<rho-N}, \eqref{sum:z<TN}, \eqref{sum:z>TN} back in~\eqref{eq:covariance-1}, this gives that there is a constant \(C>0\) such that, for any \(\rho N\leq n_1 < n_2 \leq \frac12 N\) with \(n_2-n_1 >2 M_N\), 
\[
\sum_{x_1,x_2 \in \Z^2}   \tCC_f[X_{n_1,x_1}(f),X_{n_2,x_2}(f)]  \leq  \mathbf{I}_{n_1,n_2} + \mathbf{II}_{n_1,n_2} \,,
\]
with 
\begin{equation*}
   \mathbf{I}_{n_1,n_2} \coloneqq C \bigg(\beta_N^2 \sum_{|z|\leq T_N} h(z) \mathcal{V}_{M_N,\leq k_N}^{\beta_N}(z)\bigg)^2  \Big[ \frac{1}{n_1 n_2} \Big(\rho + \gep_N + N^{-c/2} \Big) +  \frac{1}{(n_2)^2}  \Big] \,,
\end{equation*}
and, recalling that we denoted by \((\ell_1,v_1)\) the last point of~\(B_1\),
\[
\mathbf{II}_{n_1,n_2} \coloneqq \frac{C^{k_N}}{N} \sum_{x_1,y_1\in \Z^2} \sum_{A_1\in \mathcal{I}_{n_1,x_1}, B_1 \in \mathcal{I}_{n_1,y_1}}  q^{(f)}(A_1) q^{(f)}(B_1)  \EE[\xi(A_1) \xi(B_1)] \ind_{\{|v_1| >\frac12 T_N\}} \,.
\]

Then, one can easily check that
\[
\sum_{ \rho N \leq n_1 < n_2 \leq \frac12 N} \frac{1}{n_1 n_2} \leq c \log \Big(\frac{2}{\rho}\Big)^2 
\quad \text{ and } \quad
\sum_{ \rho N \leq n_1 < n_2 \leq \frac12 N} \frac{1}{(n_2)^2} \leq c \log \Big(\frac{2}{\rho}\Big) \,,
\]
so that summing \(\mathbf{I}_{n_1,n_2}\) over \(\rho N \leq n_1<n_2 \leq N\), we end up with 
\[
\sum_{ \rho N \leq n_1 < n_2 \leq \frac12 N} \mathbf{I}_{n_1,n_2} \leq C' \bigg( \log \Big(\frac{2}{\rho}\Big) \beta_N^2 \sum_{|z|\leq T_N} h(z) \mathcal{V}_{M_N,\leq k_N}^{\beta_N}(z)\bigg)^2  \Big[\rho + \gep_N +N^{-c/2} + \log\Big(\frac{2}{\rho}\Big)^{-1} \Big]\,.
\]

Turning to \(\mathbf{II}_{n_1,n_2}\), we show below that there are constants \(C,c>0\) such that
\begin{equation}
  \label{bound-sum>TN}
  \sum_{n_1=\rho N}^{\frac12 N}\sum_{x_1,y_1\in \Z^2} \sum_{A_1\in \mathcal{I}_{n_1,x_1}, B_1 \in \mathcal{I}_{n_1,y_1}}  q^{(f)}(A_1) q^{(f)}(B_1)  \EE[\xi(A_1) \xi(B_1)] \ind_{\{|v_1| >\frac12 T_N\}} \leq C\, N^{-c} \,.
\end{equation}
The proof is postponed to Section~\ref{sec:estim-technical} below (it is inspired by the proof of Claim~\ref{claim:sup-V}).
We thus get that
\[
\sum_{ \rho N \leq n_1 < n_2 \leq \frac12 N} \mathbf{II}_{n_1,n_2} \leq C\, C^{k_N}  N^{-c} \leq C' \, N^{-c/2} \bigg( \log \Big(\frac{2}{\rho}\Big) \beta_N^2 \sum_{|z|\leq T_N} h(z) \mathcal{V}_{M_N,\leq k_N}^{\beta_N}(z)\bigg)^2 \,,
\]
where for the second inequality we have used that \(C^{k_N} \beta_N^{-4} N^{-c/2}\to 0\) and also that we have \(\log (\frac2\rho) \sum_{|z|\leq T_N} h(z) \mathcal{V}_{M_N,\leq k_N}^{\beta_N}(z)\geq 1\).

Finally, using that \(\rho, N^{-c/2} \leq C \log(\frac2\rho)\) (recall \(\rho \downarrow0\) with \(\rho=N^{o(1)}\)) and since the constants are uniform in \(f\in \mathcal{M}_1(\sqrt{\rho N})\), this concludes the proof of Lemma~\ref{lem:off-diagonal-Var}.
\qed

\section{Second moment estimates: proof of Claims~\ref{claim:sup-V},~\ref{claim:ratio} and~\ref{claim:lower-bound}}
\label{sec:estim-V}

In this section, we prove the claims on the truncated covariances from Section~\ref{sec:conclusion-proof}, and~\eqref{bound-sum>TN}.

\subsection{Proof of Claim~\ref{claim:sup-V}}

First of all, notice that \(\mathcal{V}_{m,\leq k}^{\beta_N} (x) \geq 1\), simply by restricting the sum~\eqref{def:V} to \(A=B=\emptyset\).

For the upper bound, the idea is to reduce to a sub-critical estimate. 
First, we show that for any \(\tilde \beta_N \leq \beta_N\), we have that
\begin{equation}
  \label{eq:compare-beta-tilde-beta}
  \mathcal{V}_{m,\leq k}^{\beta_N} (x) \leq \Big(\frac{\beta_N^2 \e^{\beta_N^2}}{\tilde \beta_N^2} \Big)^{k} \mathcal{V}_{m,\leq k}^{\tilde \beta_N} (x) \,.
\end{equation}
This is simply due to~\eqref{eq:cov-xi}: using that \(\e^{\beta_N^2 h(z)} -1 \leq \beta_N^2 \e^{\beta_N^2} h(z)\) and \(\e^{\tilde \beta_N^2 h(z)} -1 \geq \tilde \beta_N h(z)\), we get that if \(|A|=|B|=\ell \),
\begin{equation}
  \label{eq:compare-covariances}
  \begin{split}
  & \EE[ \xi^{(\beta_N)}(A) \xi^{(\beta_N)}(B)] = \prod_{i=1}^\ell \Big(\e^{\beta_N^2 h(x_i-y_i)}-1\Big) \ind_{\{n_i=m_i\}} \\
  & \qquad \leq \Big(\frac{\beta_N^2 \e^{\beta_N^2}}{\tilde \beta_N^2}\Big)^\ell \prod_{i=1}^k \Big(\e^{\tilde \beta_N^2 h(x_i-y_i)}-1\Big) \ind_{\{n_i=m_i\}} = \Big(\frac{\beta_N^2 \e^{\beta_N^2}}{\tilde \beta_N^2}\Big)^\ell \EE[ \xi^{(\tilde \beta_N)}(A) \xi^{(\tilde \beta_N)}(B)]\,. 
  \end{split}
\end{equation}
Note that we have kept here the dependence on \(\beta_N,\tilde\beta_N\) of the variables \(\xi\) in order to make the change from \(\beta_N\) to \(\tilde\beta_N\) more explicit (recall that \(\xi_{n,x}^{(\beta)} \coloneqq \e^{\beta \omega_{n,x} -\frac12 \beta^2}-1\)).
Plugged into the definition~\eqref{def:V} of \(\mathcal{V}_{m,\leq k}^{\beta_N}\), since the sum is restricted to \(|A|=|B|\leq k\), this gives~\eqref{eq:compare-beta-tilde-beta}.

Now, we choose \(\tilde\beta_N\) as in Assumption~\ref{hyp:scaling} but with \(\hat \beta = \frac12 \hat\beta_c\): using the monotonicity of \(\mathcal{V}_{m,\leq k}^{\tilde\beta_N}\) in \(m,k\) (which is clear from~\eqref{def:V}), we get that 
\[
\sup_{x\in \Z^2}  \mathcal{V}_{m,\leq k}^{\tilde \beta_N} (x) \leq \sup_{x\in \Z^2} \EE\big[W_N^{\tilde\beta_N}(x)W_N^{\tilde\beta_N}(0)\big] <+\infty \,,
\]
using~\eqref{eq:sup-covariances} for the last part, since \(\tilde\beta_N\) is in the sub-critical regime.
Now, since we have that \(\lim_{N\to\infty}\beta_N^2 \e^{\beta_N^2}/\tilde \beta_N^2 = 2 \hat\beta /\hat\beta_c\), we obtain from \eqref{eq:compare-beta-tilde-beta} that 
\[
\sup_{x\in \Z^2} \mathcal{V}_{m,\leq k}^{\beta_N} (x) \leq C \big( C' \hat\beta/\hat\beta_c\big)^{k} \,,
\]
which concludes the proof of Claim~\ref{claim:ratio}.
\qed

\subsection{Proof of \texorpdfstring{\eqref{bound-sum>TN}}{}}
\label{sec:estim-technical}

First of all, notice that the left-hand side of~\eqref{bound-sum>TN} is bounded by 
\begin{equation}
  \label{eq:reduction-1}
  \sum_{n_1=\rho N}^{\frac12N} \sum_{x_1,y_1\in \Z^2}\sum_{A_1\in \mathcal{I}_{n_1,x_1}} \sum_{B_1 \in \mathcal{I}_{n_1,y_1} \setminus \hat{\mathcal{I}}_{n_1,y_1}} q^{(f)}(A_1) q^{(f)}(B_1)  \EE[\xi(A_1) \xi(B_1)] \,,
\end{equation}
where, in analogy with the definition \(\mathcal{I}_{n,x}\) we have set 
\[
\hat{\mathcal{I}}_{n,x} = \Big\{ A \subset \llb n,n+M_N \rrb \times \llb -\tfrac12 T_N, \tfrac12 T_N \rrb^2 \,,\mathrm{start}(A) = (n,x) \,, 1\leq |A| \leq 1+k_N  \}\,,
\]
\textit{i.e.}\ \(\hat{\mathcal{I}}_{n,x}\) contains subsets constrained to stay within spatial distance \(\frac12 T_N\) of the origin.

Now, we use the same idea as in Claim~\ref{claim:sup-V} to reduce to subcritical estimates.
We use~\eqref{eq:compare-covariances} with \(\tilde \beta_N\) scaled as in Assumption~\ref{hyp:scaling} but with \(\hat{\beta} = \frac12 \hat\beta_c\), so that~\eqref{eq:reduction-1} is bounded by
\[
\begin{split}
  \Big(\frac{\beta_N^2 \e^{\beta_N^2}}{\tilde \beta_N^2}\Big)^{k_N}  \sum_{n_1=\rho N}^{\frac12N} \sum_{x_1,y_1\in \Z^2} \sum_{A_1\in \mathcal{I}_{n_1,x_1}} \sum_{B_1 \in \mathcal{I}_{n_1,y_1} \setminus \hat{\mathcal{I}}_{n_1,y_1}} q^{(f)}(A_1) q^{(f)}(B_1)  \EE[\xi^{(\tilde \beta_N)}(A_1) \xi^{(\tilde \beta_N)}(B_1)] & \\
  \leq \big( C' \hat\beta/\hat\beta_c \big)^{k_N} \sum_{A_1\subset \llb 1,N \rrb \times \Z^2}  \sumtwo{B_1 \subset \llb 1,N \rrb \times \Z^2}{\exists (m,y)\in B_1, |y|>\frac12 T_N} q^{(f)}(A_1) q^{(f)}(B_1)  \EE[\xi^{(\tilde \beta_N)}(A_1) \xi^{(\tilde \beta_N)}(B_1)] &
\end{split}
\]
where for the second inequality we have simply removed the restrictions on the sizes of \(A_1,B_1\), keeping only the restriction on \(B_1\).
One can now observe that this last sum over \(A_1,B_1\) corresponds to the chaos decomposition of
\[
\EE\big[ W_N^{\tilde\beta_N}(f)^2 \big] - \EE \big[W_N^{\tilde\beta_N}(f) \widehat{W}_N^{\tilde \beta_N}(f)\big] \,,
\]
where 
\[
\widehat{W}_N^{\tilde \beta_N}(f) \coloneqq \bE_f\Big[ \e^{\sum_{n=1}^N (\tilde \beta_N\omega(n,S_n) - \frac12 \tilde \beta_N^2)} \ind_{\{\max_{1\leq n \leq N}|S_n| \leq \frac12 T_N\}}\Big] \,.
\]
Then, similarly to~\eqref{eq:covariances}, we get that  
\[
\EE\big[ W_N^{\tilde\beta_N}(f)^2 \big] - \EE \big[W_N^{\tilde\beta_N}(f) \widehat{W}_N^{\tilde \beta_N}(f)\big] = \bE^{\otimes 2}_f \Big[ \e^{\tilde\beta_N^2 \mathcal{L}_N(S,S')} \ind_{\{\max_{1\leq n \leq N}|S_n| > \frac12 T_N \}} \Big] 
\]
with \(\mathcal{L}_N(S,S') = \sum_{n=1}^N h(S_n-S_n')\). 
Applying the Cauchy--Schwarz inequality, this is bounded by 
\[
\bE^{\otimes 2}_f \Big[ \e^{2 \tilde\beta_N^2 \mathcal{L}_N(S,S')} \Big]^{1/2} \bP_f\Big(\max_{1\leq n \leq N}|S_n| > \frac12 T_N  \Big) \leq  \EE\big[ W_N^{\sqrt{2} \tilde\beta_N}(f)^2 \big]^{1/2} \e^{-c \log N} \,,
\]
where we have used a standard large deviation bound to estimate \(\bP_f(\max_{1\leq n \leq N}|S_n| > \frac12 T_N )\), recalling that \(T_N =\sqrt{N \log N}\) and that \(f \in \mathcal{M}_1(\sqrt{\rho N})\).
Now, since \(\sqrt{2} \tilde \beta_N\) satisfies the scaling of Assumption~\ref{hyp:scaling} with \(\hat\beta = \frac{1}{\sqrt{2}} \hat{\beta}_c < \hat\beta_c \), we get that the first term \(\EE[ W_N^{\sqrt{2} \tilde\beta_N}(f)^2]\) is bounded by a constant, thanks to~\eqref{eq:sup-covariances}. 

All together, we have proven that \(\eqref{eq:reduction-1} \leq C (C'\hat\beta/\hat\beta_c)^{k_N} N^{-c}\).
Since \(C^{k_N} = N^{o(1)}\) (recall that \(k_N\ll \log N\)), this concludes the proof of~\eqref{bound-sum>TN}, renaming the constants if necessary.
\qed

\subsection{Proof of Claim~\ref{claim:ratio}}

The proof will rely on the following identity, which simply derives from decomposing~\eqref{def:V} according to the first point in \(A,B\) (provided that they are non-empty) and recalling \eqref{eq:cov-xi}:
\[
\mathcal{V}_{m,\leq k}^{\beta_N}(z) = 1 + \sum_{n=1}^m \sum_{x,y\in \Z^2} q_n(x-z)q_n(y) \beta_N^2 h_N(x-y) \mathcal{V}_{m-n,\leq k-1}^{\beta_N} (x-y) \,.
\]
Using the monotonicity of \(\mathcal{V}_{m,\leq k}^{\tilde\beta_N}\) in \(m,k\), we get that 
\begin{equation}
  \label{eq:decompose-V}
  \mathcal{V}_{m,\leq k}^{\beta_N}(z) \leq  1 + \sum_{n=1}^m \sum_{x,y\in \Z^2} q_n(x-z)q_n(y) \beta_N^2 h_N(x-y) \mathcal{V}_{m,\leq k}^{\beta_N}(x-y) \,.
\end{equation}

\subsubsection*{(i) The summable case}

First note that in the summable case we have \(\sum_{z\in \Z^2} h(z) \mathcal{V}_{m,\leq k}^{\beta_N}(z) <+\infty\) for any \(N\in \N\); recall also that we have set \(T_N=+\infty\) here.

Thus, using~\eqref{eq:decompose-V} and simply bounding \(q_n(x-z) \leq \frac{c}{n}\) by the local CLT, we get that 
\begin{align*}
  \sum_{|z|> t_N} h(z)\mathcal{V}_{M_N,\leq k_N}^{\beta_N}(z) \leq \sum_{|z|> t_N} h(z) + \sum_{|z|> t_N} h(z) \beta_N^2 \sum_{n=1}^{M_N} \frac{c}{n} \sum_{x,y\in \Z^2} q_n(y) h(x-y)\mathcal{V}_{M_N,\leq k_N}^{\beta_N}(x-y) \,.
\end{align*}
Summing first over \(x\in \Z^2\) then over \(y\) and finally over \(n \leq M_N\) with \(M_N\leq N\), we get that 
\[
\sum_{|z|> t_N} h(z)\mathcal{V}_{M_N,\leq k_N}^{\beta_N}(z) \leq \bigg(\sum_{|z|> t_N} h(z)\bigg) \bigg(1+ c \beta_N^2 \log N \times \sum_{z'\in \Z^2} h(z')\mathcal{V}_{M_N,\leq k_N}^{\beta_N}(z') \bigg) \,.
\]
Now, from the scaling of Assumption~\ref{hyp:scaling}, we have that \(\beta_N^2 \log N \leq C\). 
Using also that \(\sum_{z\in \Z^2} h(z)\mathcal{V}_{M_N,\leq k_N}^{\beta_N}(z) \geq 1\), we get that 
\[
\frac{\sum_{|z|> t_N} h(z)\mathcal{V}_{M_N,\leq k_N}^{\beta_N}(z) }{\sum_{z\in \Z^2} h(z)\mathcal{V}_{M_N,\leq k_N}^{\beta_N}(z) } \leq C' \sum_{|z|> t_N} h(z)  \xrightarrow[\;N\to\infty\;]{} 0\,,
\]
since \(h\) is summable.
This concludes the proof of Claim~\ref{claim:ratio} in the summable~\eqref{finite-sum} case; in fact the claim holds for any sequence \(t_N\) with \(t_N\to\infty\).
\qed

\subsubsection*{(ii) The non-summable case}

In the case~\eqref{def:h}, we cannot use the same idea as above since \(\sum_{z\in \Z^2} h(z) \mathcal{V}_{M_N,\leq k_N}^{\beta_N}(z) =+\infty\) for any \(N\in\N\) and we need to work with truncated sums.
Recall that we have set \(T_N = \sqrt{N \log N}\).

First, let us consider the sum in \eqref{eq:decompose-V} when \(|x-y| > T_N\).
In that case, we may bound \(h_N(x-y) \leq C (\log N)^{a-1} /N\) by~\eqref{def:h} and we also use Claim~\ref{claim:sup-V} to get that \(\mathcal{V}_{M_N,\leq k_N}^{\beta_N}(z) \leq C^{k_N}\).
Thus, we obtain that 
\[
\sum_{n=1}^{M_N} \sum_{x,y\in \Z^2, |x-y|> T_N} q_n(x-z)q_n(y) \beta_N^2 h_N(x-y) \mathcal{V}_{M_N,\leq k_N}^{\beta_N}(x-y) \leq C \sum_{n=1}^{M_N} C^{k_N} \frac{\beta_N^2 (\log N)^{a-1}}{N} 
\]
where we also have used that the sum over \(x,y\) of \(q_n(x-z)q_n(y)\) is equal to~\(1\).
Now, recalling that \(C^{k_N} = N^{o(1)}\) because \(k_N \ll \log N\), the above is bounded by \(N^{o(1)} M_N/N\). Since \(M_N=N^{\gamma}\) with \(\gamma<1\), recall~\eqref{def:MN}, this term goes to \(0\).
Therefore, for \(N\) large enough, we get that 
\begin{align*}
  \mathcal{V}_{M_N,\leq k_N}^{\beta_N}(z) & \leq  2 + \sum_{n=1}^{M_N} \sum_{x,y\in \Z^2, |x-y|\leq T_N} q_n(x-z)q_n(y) \beta_N^2 h_N(x-y) \mathcal{V}_{M_N,\leq k_N}^{\beta_N}(x-y)  \\
  & \leq 2 + \beta_N^2 \sum_{n=1}^{M_N} \frac{c}{n}  \times \sum_{|z'|\leq T_N} h_N(z') \mathcal{V}_{M_N,\leq k_N}^{\beta_N}(z') \,,
\end{align*}
where we have used as above the local CLT to bound \(q_n(x-z)\leq \frac{c}{n}\) then summed over \(x\) and finally over \(y\in \Z^2\).
All together, using also that \(\mathcal{V}_{M_N,\leq k_N}^{\beta_N}(z) \geq 1\), we get that 
\[
\frac{\sum_{t_N\leq |z|\leq T_N} h_N(z) \mathcal{V}_{M_N,\leq k_N}^{\beta_N}(z)}{\sum_{|z|\leq T_N} h_N(z) \mathcal{V}_{M_N,\leq k_N}^{\beta_N}(z)} \leq \sum_{t_N\leq |z|\leq T_N} h_N(z) \,\bigg( \frac{2}{\sum_{|z|\leq T_N} h_N(z)} + C \beta_N^2 \log N \bigg) \,.
\]

Now, we have that \(\sum_{|z|\leq T} h(z) \sim c_a (\log T)^{a+1}\), see~\eqref{sum:h}, showing in particular that 
\[
\sum_{t_N\leq |z|\leq T_N} h_N(z) = o\bigg( \sum_{|z|\leq T_N} h_N(z)\bigg) = o\big( (\log N)^{a+1} \big) \,,
\]
provided that \(t_N = (T_N)^{1+o(1)}\); the second identity follows simply recalling that \(T_N = \sqrt{N \log N}\).
Since \(\beta_N^2 \log N = O((\log N)^{-(a+1)})\) by the scaling of Assumption~\ref{hyp:scaling}, we conclude that 
\[
\lim_{N\to\infty} \frac{\sum_{t_N\leq |z|\leq T_N} h_N(z) \mathcal{V}_{M_N,\leq k_N}^{\beta_N}(z)}{\sum_{|z|\leq T_N} h_N(z) \mathcal{V}_{M_N,\leq k_N}^{\beta_N}(z)} = 0\,,
\]
which finishes the proof of Claim~\ref{claim:ratio}.
\qed

\subsection{Proof of Claim \ref{claim:lower-bound}}

Our goal is to prove that, letting \((\beta_N)_{N\geq 1}\) be scaled as in Assumption~\ref{hyp:scaling} with \(\hat \beta>\hat\beta_c\), then for any \(\gep\in (0,1)\) there is a constant \(c_{\gep}>0\) such that
\begin{equation}
	\label{eq:mainclaim}
	\mathcal{V}_{N,\leq k_N}^{\beta_N}(0) \ge c_{\gep} \bigg((1-\varepsilon) \Big( \frac{\hat \beta}{\hat{\beta}_c}\Big)^2 \,\bigg)^{k_N}
\end{equation}
for any sequence \((k_N)_{N\geq 0}\) with \(k_N \to \infty\) with \(\log k_N \ll \log  N\) and for \(N\) sufficiently large.

Applying \eqref{eq:mainclaim} with \(N=M_N\) and \(\hat\beta=\hat\beta'\) and \(\gep \in (0,1) \) fixed such that \((1-\gep) (\hat\beta'/\hat\beta_c)^2>1\), we obtain that
\begin{equation*}
		(\log N)^{-b}\mathcal{V}_{M_N,\leq k_N}^{\beta_N}(0) \ge c_{\gep} (\log N)^{-b}\, \bigg((1-\varepsilon) \Big( \frac{\hat \beta}{\hat{\beta}_c}\Big)^2 \,\bigg)^{k_N} = c_{\gep}\, \e^{ c_{\hat\beta} k_N - b \log \log N}\,.
\end{equation*}
with \(c_{\hat\beta} \coloneqq \log  ((1-\gep)(\hat\beta'/\hat\beta_c)^2)  >0\).
Recalling that \(k_N= (\log\log N)^2 \gg \log \log N\), this concludes the proof of Claim~\ref{claim:lower-bound}.

Since the summable case is a by-product of the proof of Lemma~\ref{lem:sec-mom-summable} (see Remark~\ref{rem:lower-bound}, the i.i.d.\ case being even simpler), we focus on proving~\eqref{eq:mainclaim} only in the non-summable case.
We rely on the proof of \cite[Lemma~2.7]{CCD25}.
It gives that for any \(M>1\), if \(N\) is large enough \(\mathcal{V}^{\beta_N}_{N,\leq k_N}(0)\) is bounded from below by
\begin{equation*}
	1+\sum_{k=1}^{k_N} 
	\Big(\frac{(1-\varepsilon)\beta_N^2(\log N)^{a+2}}{2^{a+1} M}\Big)^k 
	\sum_{0\leq l_1,\ldots,l_k\leq \lfloor M\rfloor-1}
	\prod_{i=1}^{k} \left(1-\frac{\log k_N}{\log N}-\frac{l_i\vee l_{i-1}}{M}-\frac4M\right)
		\left(\frac{l_i}{M}\right)^a\,,
\end{equation*}
with \(l_0=0\).
Letting \(\delta_N \coloneqq \frac{\log k_N}{\log N}\) and factorizing the product by \(1-\delta_N\), setting \(M' = (1-\delta_N)M\) we can rewrite this as 
\begin{equation}
  \label{eq:V(0)}
  \mathcal{V}^{\beta_N}_{N,\leq k_N}(0)
	\ge
	1+\sum_{k=1}^{k_N} 
	\Big(\frac{\beta_N^2(1-\varepsilon) (1-\delta_N)^{a+2} (\log N)^{a+2}}{2^{a+1}}\Big)^k I_k^{(M')}\,,
\end{equation}
where we have set (writing \(M\) instead of \(M'\) for simplicity)
\begin{equation*}
	I_k^{(M)}\coloneqq
	\frac1{M^k}
	\sum_{0\leq l_1,\ldots,l_k\leq \lfloor M\rfloor-1}
	\prod_{i=1}^{k} \left(1-\frac{l_i\vee l_{i-1}}{M}-\frac{4}{M}\right)
		\left(\frac{l_i}{M}\right)^a \quad \text{ with }l_0=0 \,.
\end{equation*}

We stress that for any fixed \(k\), by a Riemann sum approximation we have
\begin{equation}
  \label{eq:Riemann-sum-0}
  \lim_{M\to\infty} I_k^{(M)} 
  = \alpha_k \coloneqq \int_{s_1,\ldots,s_{k}\in[0,1]} \prod_{i=1}^k \bigl(1-s_i\vee s_{i-1}\bigr) s_i^{\,a} \dd s_1 \cdots \dd s_k\quad \text{ with }s_0=0 \,.
\end{equation}
Now, the idea is that \cite[Thm.~6.1]{CCD25} shows that the series \(\sum_{k\ge0} z^{2k}\alpha_k\) is equal to \(\tilde J_{\alpha}( \frac{2}{a+2}z)^{-1}\); in particular the radius of convergence for \(\alpha_k\) is equal to \((\frac{a+2}{2})^2 z_a^2\).
The difficultly here is that we need to estimate \(I_k^{(M)}\) with a growing \(k\to\infty\) (ideally for \(k=k_N\)).

To circumvent the difficulty of controlling the Riemann sums approximations, we use some ``super-multiplicativity'' property, by introducing a truncated version of \(I_k^{(M)}\). 
Throughout the proof, we adopt the convention that, for every \(\zeta\in[0,1]\), the notation \(\zeta M\) stands for \(\lfloor\zeta M\rfloor\). 
For \(l_0\le \zeta M\), define,
\begin{align*}
	\widetilde I_k^{(M)}(l_0;M-1,\zeta M)\coloneqq
	\frac{1}{M^k}
	\sum_{\substack{0\le l_1,\ldots,l_{k-1}\le M-1\\0\le l_k\le \zeta M	}}
	\prod_{i=1}^k\left(1-\frac{l_i\vee l_{i-1}}{M}-\frac{4}{M}\right)
	\left(\frac{l_i}{M}\right)^a 
\end{align*}
and
\begin{align*}
\widetilde I_k^{(M)}(M-1,\zeta M)\coloneqq \inf_{l_0 \le \zeta M} \widetilde I_k^{(M)}(l_0;M-1,\zeta M)\,.
\end{align*}
This way, restricting the sum to \(l_{jk} \leq \zeta M\) we clearly have that \(\widetilde I_{(j+1)k}^{(M)}(l_0;M-1,\zeta M) \geq \tilde I_{jk}^{(M)}(l_0;M-1,\zeta M)  \widetilde I_k^{(M)}(M-1,\zeta M)\).
Thus, by induction we get that 
\begin{equation}\label{eq:Ik-concatenation}
	I_{jk}^{(M)} \geq \widetilde I_{jk}^{(M)}(0;M-1,\zeta M)
	\ge \left(\widetilde I_k^{(M)}(M-1,\zeta M)\right)^j,
	\qquad j,k\in\mathbb \N\,.
\end{equation}

Moreover, by a Riemann sum approximation, letting
\begin{align*}
	\alpha_k(s_0;\xi,\zeta) \coloneqq
	\int_{s_1,\ldots,s_{k-1}\in[0,\xi], s_k\in[0,\zeta]}
	\prod_{i=1}^k
	\bigl(1-s_i\vee s_{i-1}\bigr)s_i^{\,a}
 \dd s_1 \cdots \dd s_k\,,
\end{align*}
we obtain
\begin{equation}
	\label{eq:riemannsumapprox}
	\lim_{M\to \infty} \widetilde I_k^{(M)}(M-1,\zeta M) = \inf_{s_0 \le \zeta} \alpha_k(s_0;1,\zeta)  = \alpha_k (\zeta;1,\zeta) \,,
\end{equation}
noting that \(s_0\mapsto \alpha_k(s_0;1,\zeta)\) is non-increasing for the last inequality.

Finally, we prove below the following estimate, which tells that \(\alpha_k(\zeta;1,\zeta)\) has the same radius of convergence as \(\alpha_k\) defined in~\eqref{eq:Riemann-sum-0}:
\begin{equation}
	\label{eq:limsupalpha}
	\limsup_{k\to\infty}\alpha_k(\zeta;1,\zeta)^{1/k}
	=\left(\frac{2}{a+2}\right)^2\frac1{z_a^2}\,.
\end{equation}

Let us postpone the proof of~\eqref{eq:limsupalpha} until the end of the argument and conclude the proof from here.
By \eqref{eq:limsupalpha}, for any $\varepsilon \in (0,1)$ there exists $k_0=k_0(\varepsilon)\in\N$ such that 
\begin{equation}
	\label{eq:k0}
	\alpha_{k_0}(\zeta;1,\zeta) \ge \bigg( (1-\varepsilon)\Big(\frac{2}{a+2}\Big)^2\frac1{z_a^2} \bigg)^{k_0}\,;
\end{equation}
Moreover, this \(k_0\) being fixed, by \eqref{eq:riemannsumapprox} there exists
$M_0=M_0(\varepsilon)$ such that, for every $M\ge M_0$,
\begin{equation}
	\label{eq:M0}
	\widetilde I_{k_0}^{(M)}(M-1,\zeta M) \ge (1-\varepsilon)^{k_0} \alpha_{k_0}(\zeta;1,\zeta) \ge \bigg( (1-\varepsilon)^2\Big(\frac{2}{a+2}\Big)^2\frac1{z_a^2} \bigg)^{k_0}\,.
\end{equation}

The proof of \eqref{eq:mainclaim} now follows. 
Recall the definition \eqref{def:beta-N} of $\beta_N$ and restrict the sum in \eqref{eq:V(0)} to simply \(k =j_N k_0\) with \(j_N = \lfloor k_N /k_0 \rfloor\). 
By applying \eqref{eq:Ik-concatenation} and \eqref{eq:M0}, for $M\ge M_0$ we obtain 
\begin{equation*}
	\mathcal{V}^{\beta_N}_{N,\leq k_N}(0) \geq 
	\Big( (1-\varepsilon) (1-\delta_N) \Big(\frac{a+2}{2} \Big)^2 \hat\beta^2 \Big)^{j_N k_0}
	I_{j_N k_0}^{(M)} \ge  \bigg( (1-\varepsilon)^4 \Big(\frac{\hat \beta}{z_a}\Big)^2 \bigg)^{j_N k_0} \,,
\end{equation*} 
where we have also chosen \(N\) large enough so that \(1-\delta_N  \geq 1-\gep\).
Since \(j_N k_0 \geq k_N - k_0\), we obtain \eqref{eq:mainclaim}, after renaming the parameter~$\varepsilon$; with \(c_{\gep} \coloneqq \big( (1-\varepsilon)^4 (\hat \beta/z_a)^2 \big)^{-k_0}\).
\qed

\begin{proof}[Proof of~\eqref{eq:limsupalpha}]
  Let us state here \cite[Thm.~6.1]{CCD25}, which gives that for any \(s_0<1\), 
  \[
  \sum_{k=0}^{\infty} z^{2k} \alpha_{k}(s_0;1,1) = \frac{\tilde J \big(\frac{2}{a+2} (s_0)^{\frac{a+2}{2}} z\big)}{\tilde J \big(\frac{2}{a+2} z\big)} \,.
  \]
  In particular, for any fixed \(s_0<1\), the radius of convergence of the series with terms \((\alpha_{k}(s_0;1,1))_{k\geq 0}\) is \((\frac{a+2}{2})^2 z_a^2\).
  Thus, by the Cauchy--Hadamard theorem it follows that 
  \begin{equation}
  	\label{eq:cauchyhad}
  	\limsup_{k\to\infty} \alpha_{k}(s_0;1,1)^{1/k}=\left(\frac{2}{a+2}\right)^2\frac1{z_a^2}\,,\qquad s_0 \in [0,1)\,.
  \end{equation}
  In particular, the upper bound in~\eqref{eq:limsupalpha} follows simply by observing that \(\alpha_k(\zeta;1,\zeta)\leq \alpha_{k}(\zeta;1,1)\).

  For the lower bound, let us fix \(\xi \in (\zeta,1)\).
  Since \(\xi \mapsto \alpha_k (s_0;\xi,\zeta)\) is non-increasing, we get that \(\alpha_k (\zeta;1,\zeta) \ge \alpha_k (\zeta;\xi,\zeta)\).
  Additionally, bounding \((1-s_k\vee s_{k-1}) \geq 1-\xi\) for $s_k \le \zeta$ and $s_{k-1}\le \xi$ in the integral \(\alpha_k (\zeta;\xi,\zeta)\), we get that 
  \begin{equation*}
    \begin{split}
      \alpha_k (\zeta;\xi,\zeta) 
      & \geq  \int_{\substack{s_1,\ldots,s_{k-1}\in[0,\xi]}}
      \prod_{i=1}^{k-1} \bigl(1-s_i\vee s_{i-1}\bigr)s_i^{\,a} \Big( (1-\xi) \int_{0}^{\zeta} (s_k)^a \dd s_k\Big)
      \dd s_1 \cdots \dd s_{k-1} \\
      & =  \frac{(1-\xi)\, \zeta^{a+1}}{a+1} \, \alpha_{k-1}(\zeta;\xi,\xi)\,.
    \end{split}
  \end{equation*}
  Moreover, by the change of variables $t_i=s_i/ \xi $, $i=1,\ldots,k-1$, the integral can be bounded  by
  \begin{equation*}
  	\begin{split}
  		\alpha_{k-1}(\zeta;\xi,\xi) &=\xi^{(k-1)(a+1)} \int_{[0,1]^{k-1}} \prod_{i=1}^{k-1} \big( 1 - \xi(t_i \vee t_{i-1}) \big) t_i^a \dd t_1 \cdots \dd t_{k-1} \quad \text{ with }t_0 = \zeta/\xi<1\\
  		& = \xi^{(k-1)(a+1)} \alpha_{k-1}(\zeta/\xi; 1,1)\,.
  	\end{split}
  \end{equation*}
  All together, we have that \(\alpha_k (\zeta;\xi,\zeta) \geq  \frac{(1-\xi)\, \zeta^{a+1}}{a+1}  \xi^{(k-1)(a+1)} \alpha_{k-1}(\zeta/\xi; 1,1) \), so that applying \eqref{eq:cauchyhad} with \(s_0=\zeta/\xi<1\), we obtain  
  \begin{equation*}
  		\limsup_{k\to\infty}\alpha_k(\zeta;1,\zeta)^{1/k}
  		\ge
  		\limsup_{k\to\infty} \big(\xi^{a+1}\big)^{(k-1)/k}  \alpha_{k-1}(\zeta/\xi;1,1)^{1/k}
  		=\xi^{a+1}
  		\left(\frac{2}{a+2}\right)^2\frac1{z_a^2}\,.
  \end{equation*}
  Since \(\xi \in (\zeta,1)\) can be taken arbitrary close to \(1\), this yields the lower bound in \eqref{eq:limsupalpha}.
\end{proof}

\begin{appendix}

\section{Second moment estimates}

\subsection{The case of summable correlations}
\label{app:sec-mom-summable}

In this section, we prove the following second moment lemma in the summable case~\eqref{finite-sum};it follows the same lines as in the i.i.d.\ setting.

\begin{lemma}
  \label{lem:sec-mom-summable}
  Suppose that Assumption~\ref{main-assumption} holds with summable correlations~\eqref{finite-sum}, and let $(\beta_N)_{N \ge 0}$ be as in~\eqref{def:beta-N-summable} from Assumption~\ref{hyp:scaling}. 
  Then,
  \begin{equation}
  	\label{eq:limit-second-moment-summable}
  	\lim_{N\to\infty} \EE\Big[ (W_N^{\beta_N})^2 \Big] =
  	\begin{cases}
  		\frac{1}{1-\hat{\beta}^2} & \quad \text{ if } \hat \beta<1 \,,\\
  		+\infty & \quad \text{ if } \hat \beta \geq 1\,.\\
  	\end{cases}
  \end{equation}
  Additionally, if \(\hat \beta<1\), then
  \begin{equation}
    \label{eq:covariances-summable}
    \sup_{N\geq 0}\sup_{x\in \Z^2} \EE\big[ W_N^{\beta_N}(x) W_N^{\beta_N}(0)\big] <+\infty \,.
  \end{equation}
\end{lemma}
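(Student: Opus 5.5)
We follow the i.i.d.\ renewal approach of \cite{CSZ17b}, starting from the chaos representation~\eqref{chaos-covariances} with $x=y=0$. Setting $z_i \coloneqq x_i-y_i\in\Zeven^2$ and summing over $x_i$ with $z_i$ fixed, Chapman--Kolmogorov lets us rewrite the product of kernels. For the pair $(S,S')$, the difference $D_n\coloneqq S_n-S'_n$ is itself a (lazy, $\Zeven^2$-valued) random walk with kernel $q_{2n}$ up to symmetry. We therefore obtain
\[
\EE\big[(W_N^{\beta_N})^2\big] = \sum_{k\ge0}\beta_N^{2k}\sum_{0<n_1<\dots<n_k\le N}\ \sum_{z_1,\dots,z_k\in\Zeven^2}\prod_{i=1}^k h_N(z_i)\,q_{2(n_i-n_{i-1})}(z_i-z_{i-1}),
\]
with $z_0=0$. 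This is equivalent to the identity $\bE^{\otimes2}[\e^{\beta_N^2\mathcal L_N}]$ from~\eqref{eq:covariances}. It is a multiple sum of kernel-weighted walk transitions, i.e.\ a ``Markov-renewal'' series.

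\textbf{Upper bound and~\eqref{eq:covariances-summable}.} We bound each factor uniformly in the previous point. Set
\[
R_N\coloneqq\sup_{w}\sum_{n\le N}\sum_{z}h_N(z)\,q_{2n}(z-w).
\]
The local CLT bound $q_{2n}(\cdot)\le \frac{1}{\pi n}(1+o(1))$, valid uniformly for large $n$ on the lattice $\Zeven^2$ (the factor $2$ from the sublattice compensates the halved variance), gives $R_N\le (1+o(1))\frac{\Sigma_h}{\pi}\log N$. The contribution of small $n$ is negligible, since $\sum_{n\le n_0}$ is $O(1)$. Iterating from the last factor backwards, the $k$-th term is at most $(\beta_N^2R_N)^k\le(\hat\beta^2+o(1))^k$, because $\beta_N^2\Sigma_h\log N/\pi=\hat\beta^2$ by~\eqref{def:beta-N-summable}. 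Summing the geometric series gives $\limsup_{N\to\infty}\le(1-\hat\beta^2)^{-1}$ for $\hat\beta<1$. The argument is uniform in the starting point $z_0=x$, so it yields~\eqref{eq:covariances-summable} directly.

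\textbf{Lower bound and limit.} This is the main step. We restrict to time increments $n_i-n_{i-1}\in[N^{\epsilon},N^{1-\epsilon}]$ together with $n_k\le N$, and to spatial positions with $|z_i|\le \sqrt{N^{\epsilon}}/\log N$. On this set, $q_{2m}(z-z')\ge\frac{1-o(1)}{\pi m}$ whenever $|z|,|z'|$ are small compared with $\sqrt m$. The factors then decouple into $\beta_N^2\Sigma_h(1-o(1))\frac{1}{\pi}\sum_m\frac{1}{m}$. Since $h$ is summable, the restriction on $|z_i|$ loses only $o(1)$ of $\Sigma_h$. This reduces the problem to the i.i.d.\ computation with renewal weights $\frac{1}{m\log N}$: the $k$-fold sum over logarithmic increments with total time $\le N$ converges to $\hat\beta^{2k}\,\PP(\text{sum of }k\text{ stable increments}\le1)$, which tends to $\hat\beta^{2k}$ for each fixed $k$ as $\epsilon\to0$. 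Fatou's lemma then gives $\liminf\ge\sum_k\hat\beta^{2k}$. This equals $(1-\hat\beta^2)^{-1}$ for $\hat\beta<1$ and $+\infty$ for $\hat\beta\ge1$; in the latter case one can also invoke monotonicity in $\hat\beta$.

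The main obstacle is making the decoupling in the lower bound uniform: the local CLT must be applied with spatial points near, rather than at, the origin, and the parity and constant conventions on $\Zeven^2$ must be checked so that the constant comes out exactly $\Sigma_h/\pi$. The same lower bound with the number of terms restricted to $k\le k_N$ is also what Remark~\ref{rem:lower-bound} needs.
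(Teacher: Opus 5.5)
Your proof is correct and is essentially the paper's argument. It uses the same Chapman--Kolmogorov reduction to the series in $q_{2(n_i-n_{i-1})}(z_i-z_{i-1})\,h_N(z_i)$, the same upper bound via $\sup_z q_{2n}(z)\le \frac{1+o(1)}{\pi n}$ and summing $h$ to get $(\beta_N^2\Sigma_h\frac{\log N}{\pi})^k$, and the same lower bound by restricting increments and positions so that the local CLT gives $q_{2m}\ge\frac{1-o(1)}{\pi m}$. The only difference is the truncation: the paper uses increments in $[(\log N)^3,N/k_N]$, $|z_i|\le\log N$ and $k\le k_N$, where you use an $N^{\pm\epsilon}$ window, fixed $k$ and Fatou.

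One thing to tidy up: with increments at most $N^{1-\epsilon}$ and $k$ fixed, the constraint $n_k\le N$ holds automatically for large $N$. So the time sums factorize exactly into $\big((1-2\epsilon+o(1))\log N\big)^k$, the $k$-th term tends to $\big((1-2\epsilon)\hat\beta^2\big)^k$, and no ``stable increments'' limit is involved.
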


\begin{proof}
  First of all, using the chaos expansion~\eqref{def:chaos-expansion}, decomposing over the points of \(A,B\) and using~\eqref{eq:cov-xi}, we get that \(\EE[W_N^{\beta_N}(x)W_N^{\beta_N}(y)]\) is equal to
  \[
   \sum_{k=0}^{\infty} (\beta_N^2)^k \sum_{1\le n_1<\cdots<n_k\le N}
		\sumtwo{x_1,\ldots,x_k\in\Z^2}{y_1, \ldots,y_k \in \Z^2} \prod_{i=1}^k q_{n_i-n_{i-1}}(x_i-x_{i-1})q_{n_i-n_{i-1}}(y_i-y_{i-1}) h_N(x_i-y_i) \,,
  \]
  with by convention \(n_0=0\), \(x_0=x\), \(y_0=y\).
  With a change of variable \(z_i= x_i-y_i \in \Zeven^2\) and using that \(\sum_{x_i\in \Z^2} q_{n_i-n_{i-1}}(x_i-x_{i-1})q_{n_i-n_{i-1}}(y_i-y_{i-1}) = q_{2(n_i-n_{i-1})}(z_i-z_{i-1})\), we get that 
  \begin{equation}
    \label{eq:formula-covariances}
    \EE[W_N^{\beta_N}(x)W_N^{\beta_N}(y)] = \sum_{k=0}^{\infty} (\beta_N^2)^k \sum_{1\le n_1<\cdots<n_k\le N}
		\sum_{z_1,\ldots,z_k\in\Zeven^2}\prod_{i=1}^k q_{2(n_i-n_{i-1})}(z_i-z_{i-1}) h_N(z_i)
  \end{equation}
  with by convention \(n_0=0\), \(z_0=x-y\).

  \smallskip
  \textbullet\
  As far as the upper bound is concerned, we only deal with the case \(\hat{\beta}<1\). 
  We simply bound \(q_{2n}(z) \leq q_{2n}(0)\), which is valid for any \(z\in \Zeven^2\).
  Using also that \(h_N(z)\leq \e^{\beta_N^2} h(z)\), summing over \(z_1,\ldots, z_k\in \Z^2\) gives that 
  \[
  \EE[W_N^{\beta_N}(x)W_N^{\beta_N}(y)] \leq \sum_{k=0}^{\infty} \Big(\beta_N^2 \e^{\beta_N^2} \Sigma_h \Big)^k \sum_{1\le n_1<\cdots<n_k\le N}
		q_{2(n_i-n_{i-1})}(0) \leq \sum_{k=0}^{\infty} \Big(\beta_N^2 \e^{\beta_N^2} \Sigma_h R_N \Big)^k\,,
  \]
  where we have simply extended the sum to \(1\leq n_i-n_{i-1}\leq N\) and introduced \(R_N \coloneqq \sum_{\ell=1}^N q_{2\ell}(0)\).
  Now, using the scaling~\eqref{def:beta-N-summable} together with the fact that \(R_N \sim \frac{1}{\pi} \log N\) we get that
  \[
  \lim_{N\to\infty} \beta_N^2 \e^{\beta_N^2} \Sigma_h R_N = \hat\beta^2 \,.
  \]
  All together, we have that 
  \[
  \limsup_{N\to\infty} \sup_{x,y\in \Z^2} \EE[W_N^{\beta_N}(x)W_N^{\beta_N}(y)]  \leq  \frac{1}{1-\hat\beta^2} \,.
  \]
  This shows the upper bound in~\eqref{eq:limit-second-moment-summable} and also~\eqref{eq:covariances-summable}.

  \smallskip
  \textbullet\ We now turn to the lower bound.
  Let \((k_N)_{N\geq 0}\) be a sequence that grows to \(\infty\), with \(\log k_N \ll \log N\).
  We start from~\eqref{eq:formula-covariances} (with \(x=y\)), and we restrict the sum to \(k\leq k_N\) and \(1\leq \ell_i \coloneqq n_i-n_{i-1} \leq N/k_N\): using also that \(h_N(z)\geq h(z)\), we get that 
  \begin{equation}
    \label{eq:restrict-lower-bound}
    \begin{split}
    \EE \big[(W_N^{\beta_N})^2\big] 
    &\geq \sum_{k=0}^{k_N} (\beta_N^2)^k
		\sum_{1\le \ell_1,\ldots, \ell_k\le N/k_N}
		\sum_{z_1,\ldots, z_k \in \Zeven^2}
		\prod_{i=1}^{k}
		q_{2 \ell_i}(z_i-z_{i-1})\, h(z_i)  \\
    & \geq \sum_{k=0}^{k_N} (\beta_N^2)^k
		\sum_{z_1,\ldots, z_k \in \Zeven^2}
		\prod_{i=1}^{k}
		R_{N/k_N}(z_i-z_{i-1})\, h(z_i)  \,,
  \end{split}
  \end{equation}
  where we have introduced \(R_{m}(z) \coloneqq \sum_{\ell=1}^m q_{2\ell}(z)\) for any \(z\in \Z^2\), similarly as in \cite{CCD25}.

  Now, by the local CLT \cite[Thm.~1.2.1]{Law96}, we have that \(q_{2\ell}(z) \geq \frac{1 - c(\log N)^{-1}}{\pi \ell}\)  if \(|z|\leq \log N\) and \(\ell\geq (\log N)^3\).
  We thus have that for \(|z|\leq \log N\),
  \begin{equation}
    \label{eq:RN-z-lower}
    R_{N/k_N}(z) \geq \sum_{\ell= (\log N)^3}^{N/k_N} q_{2\ell}(z)  \geq (1- c (\log N)^{-1}) \sum_{\ell= (\log N)^3}^{N/k_N} \frac{1}{\pi \ell}  \geq (1-\gep_N) \frac{\log N}{\pi}\,,
  \end{equation}
  for some \(\gep_N\downarrow 0\), using also that \(\log k_N\ll \log N\) for the last inequality.
  Therefore, restricting the sum in~\eqref{eq:restrict-lower-bound} to \(|z_i|\leq \log N\) we get that 
  \begin{equation}
    \label{eq:lower-second-moment}
    \EE \big[(W_N^{\beta_N})^2\big] \geq \sum_{k=0}^{k_N} \Big( \beta_N^2 (1-\gep_N) \frac{\log N}{\pi} \sum_{z\in \Zeven^2,|z|\leq \log N} h(z)\Big)^k \,.
  \end{equation}
  To conclude the proof, notice that \(\lim_{N\to\infty} \beta_N^2 \frac{\log N}{\pi} \sum_{|z|\leq \log N} h(z) = \hat\beta^2\) thanks to the scaling~\eqref{def:beta-N-summable}.
  We thus get \(\liminf_{N\to\infty}	\EE[(W_N^{\beta_N})^2] \geq \frac{1}{1-\hat \beta^2}\) if \(\hat \beta<1\) and \(\liminf_{N\to\infty}	\EE[(W_N^{\beta_N})^2] =+\infty\) if \(\hat \beta\geq 1\), which concludes the proof of the lower bound in~\eqref{eq:limit-second-moment-summable}.
\end{proof}

\begin{remark}
  \label{rem:lower-bound}
  We stress that the proof of Claim~\ref{claim:lower-bound} in the summable case follows directly from~\eqref{eq:lower-second-moment} above.
  Indeed, the same lower bound as in~\eqref{eq:lower-second-moment} holds for \(\mathcal{V}_{N,\leq k_N}^{\beta_N}(0)\); the formula~\eqref{eq:formula-covariances} holds for \(\mathcal{V}_{N,\leq k_N}^{\beta_N}\) with the sum simply truncated at \(k\leq k_N\). 
  Since for any \(\gep>0\) fixed, we have \(\beta_N^2 \frac{\log N}{\pi} \sum_{|z|\leq \log N} h(z) \geq (1-\gep) \hat\beta^2\) for \(N\) large enough, the lower bound \eqref{eq:lower-second-moment} gives \(\mathcal{V}_{N,\leq k_N}^{\beta_N}(0) \geq ((1-\gep) \hat\beta^2)^{k_N}\), which concludes the proof of~\eqref{eq:mainclaim} and thus of Claim~\ref{claim:lower-bound}.
  The i.i.d.\ case is even simpler.
\end{remark}

\begin{remark}
  \label{rem:convergence}
  Let us observe that, in view of~\eqref{eq:covariances}, Lemma~\ref{lem:sec-mom-summable} shows a convergence of the Laplace transform of \(\mathcal{L}_N(S,S') \coloneqq \sum_{n=1}^N h(S_{n}-S_n') \stackrel{(d)}{=} \sum_{n=1}^N h(S_{2n})\).
  Indeed, letting \(C_h \coloneqq \Sigma_h/\pi = \mathfrak{C}_h^{-2}\) as in~\eqref{eq:scaling-L}, Lemma~\ref{lem:sec-mom-summable} shows that for any \(\hat\beta>0\)
  \[
  \lim_{N\to\infty} \bE\Big[ \e^{ \hat\beta^2 \frac{1}{C_h \log N} \sum_{n=1}^N h(S_{2n})} \Big] = \bE[\e^{ \hat\beta^2 Y}] \,,
  \]
  where \(Y\sim \mathrm{Exp}(1)\). 
  One thus deduces that \(\frac{1}{C_h \log N} \sum_{n=1}^N h(S_{2n}) \xrightarrow[N\to\infty]{(d)} Y\sim \mathrm{Exp}(1)\).
\end{remark}

\subsection{Proof of \texorpdfstring{\eqref{eq:scaling-L}}{}}
\label{sec:scaling-L}

We start from the formula
\begin{equation}
  \label{def:RN-x}
  \bE^{\otimes 2}[\mathcal{L}_N(S,S')] = \sum_{n=1}^N \bE[h(S_{2n})]  = \sum_{x\in \Zeven^2} h(x) R_N(x) \,,
\end{equation}
with \(R_N(x) \coloneqq \sum_{n=1}^N q_{2n}(x)\) the Green function as above.
Let us stress that the local limit theorem \cite[Thm.~1.2.1]{Law96} shows that \(q_{2n}(x) =  \frac{1}{\pi\ell} \e^{-|x|^2/\ell} + O(n^{-2})\).
This shows that 
\begin{equation}
  \label{eq:asymp-RN-x}
  R_N(x) = \sum_{\ell=1}^{N} \frac{1}{\pi \ell} \e^{- |x|^2/\ell} +O(1) = \frac{1}{\pi} \log \Big( \frac{N}{|x|^2}\Big) + O(1)\,.
\end{equation}
The second identity comes from dividing the sum according to whether \(n\leq |x|^2\) or \(n>|x|^2\), both terms being estimated as done in~\cite[Thm.~1.6.2]{Law96}.

\smallskip
\textbullet\ In the summable case~\eqref{finite-sum}, using also that \(R_N(x)\leq R_N(0) \le C \log N\) for all \(N\geq 1\), we have that for any fixed \(T>1\),
\[
\sum_{x\in \Zeven^2} h(x) R_N(x) = \sum_{x\in \Zeven^2,|x|\leq T} h(x) R_N(x) + \sum_{x\in \Zeven^2, |x|>T} h(x) O(\log N) \,.
\]
Thus, taking \(T_N= \log N\) so that \(\log T_N \ll \log N\), we get that \(R_N(x) = (1+o(1)) \frac{1}{\pi}\log N\) uniformly for \(|x|\leq T_N\), by~\eqref{eq:asymp-RN-x}.
Thus, we obtain
\[
\sum_{x\in \Zeven^2} h(x) R_N(x) = (1+o(1))\frac{1}{\pi}\log N \sum_{x\in \Zeven^2,|x|\leq T_N} h(x) + o(\log N) = (1+o(1))\frac{\Sigma_h}{\pi}\log N \,.
\]

\textbullet\ In the non-summable case~\eqref{def:h}, notice that we have (see e.g.\ \cite[App.~A, Eq.~(5)]{CCD25})
\begin{equation}
  \label{sum:h}
  \sum_{x\in \Zeven^2,|x|\leq T} h(x) \sim \frac{\pi}{a+1} (\log T)^{a+1} \qquad \text{ as } T\to\infty \,.
\end{equation}
Thus, applying~\eqref{eq:asymp-RN-x}, we get that
\begin{equation*}
  \begin{split}
  \sum_{|x|\leq \sqrt{N}} h(x) R_N(x) &  = \frac1\pi \sum_{|x|\leq \sqrt{N}}  h(x) \log N - \frac{2}{\pi} \sum_{|x|\leq \sqrt{N}}  h(x) \log |x| +  O\Big( (\log N)^{a+1} \Big) \\
  & = (1+o(1))\frac{2^{-(a+1)}}{a+1} (\log N)^{a+2} - (1+o(1))2 \cdot \frac{ 2^{-(a+2)}}{a+2}(\log N)^{a+2}  \,,
  \end{split}
\end{equation*}
where we have used~\eqref{sum:h} for the first sum and~\eqref{sum:h} with \(\tilde h(x) = h(x) \log |x|\) (\textit{i.e.}\ with \(a+1\) instead of \(a\) in~\eqref{def:h}) for the second sum.

Now, one can easily see that the remaining sum is negligible.
We simply bound \(h(x) \leq C (\log N)^a/N\) for \(|x|>\sqrt{N}\) so that 
\[
 \sum_{|x| > \sqrt{N}} h(x) R_N(x) \leq C \frac{(\log N)^a}{N} \sum_{x\in \Zeven^2} R_N(x) = C (\log N)^a \,,
\]
where we have simply used that \(\sum_{x\in \Zeven^2} R_N(x) = \sum_{n=1}^N \sum_{x\in \Zeven^2}q_{2n}(x)=N\).
All together, this proves~\eqref{eq:scaling-L} with the constant \(C_a= \frac{2^{-(a+1)}}{a+1} - \frac{ 2^{-(a+1)}}{a+2}\), as announced.
\qed

\section{Computation of \texorpdfstring{\(\nabla \log W_N^{\beta}\)}{PDFstring}}
\label{app-gradient}

We write here the proof of the formula~\eqref{eq:nabla-f}, \textit{i.e.}\ we compute the norm of the gradient $\nabla f_N(\hat \omega)$, with \(f_N(\hat \omega) = \log W_N^{\beta}\) and \(\omega\) is related to \(\hat \omega\) by~\eqref{omega-hat-omega}. 
Recall that
	\begin{equation*}
			f_N(\hat \omega) = \log \bE \Big[\, \e^{ \sum_{m=1}^N (\beta\omega(m,S_m) -\frac12\beta^2) }\,\Big]
      = \log \bE \Big[\, \e^{\beta \sum_{m=1}^N \big\lbrace \sum_{z\in\Z^2}h_0(S_m-z) \hat\omega(m,z)\big\rbrace-N\frac{\beta^2}{2}}\,\Big]\,,
	\end{equation*}
  using the identity~\eqref{omega-hat-omega} for the second identity.

	For $(n,y)\in \N \times \Z^2$ we first compute the derivative with respect to $\hat \omega(n,y)$:
	\begin{equation*}
		\frac{\partial f_N(\hat \omega)}{\partial \hat \omega(n,y)}=\frac{1}{W_N^\beta} \bE \Big[ \, \beta h_0(S_n-y) \, \e^{\beta\, \sum_{m=1}^N \big\lbrace \sum_{z\in\Z^2}h_0(S_m-z) \hat\omega(m,z)\big\rbrace-N\frac{\beta^2}{2}} \,\Big]\,,
	\end{equation*}
  with \(\sum_{z\in\Z^2}h_0(S_m-z) \hat\omega(m,z) = \omega(m,S_m)\).
	We therefore have
  \begin{equation*}
    \Big(\frac{\partial f_N(\hat \omega)}{\partial \hat \omega(n,y)}\Big)^2 = \frac{1}{(W_N^\beta)^2} \bE^{\otimes 2} \Big[ \, \beta^2\, h_0(S_n-y) h_0(S'_n-y)\, \e^{\sum_{m=1}^N (\beta\{ \omega(m,S_m)+\omega(m,S'_m) \}-  \beta^2)} \,\Big] \,,
  \end{equation*}
  where $S$ and $S'$ are two independent copies of the random walk.
  Now, using that \(h=h_0\ast h_0\), we end up with
  \begin{equation*}
      |\nabla f_N(\hat \omega)|^2 =\sum_{n=1}^N\sum_{y\in \Z^2} \Big(\frac{\partial f_N(\hat \omega)}{\partial \hat \omega(n,y)}\Big)^2 = \frac{\beta^2}{(W_N^\beta)^2} \sum_{n=1}^N \bE^{\otimes 2} \Big[h(S_n-S'_n)\, \e^{\sum_{m=1}^N (\beta\{ \omega(m,S_m)+\omega(m,S'_m) \}-  \beta^2)}\Big].
  \end{equation*}
  This proves~\eqref{eq:nabla-f}, recalling that $\mathcal{L}_N(S,S') \coloneqq\sum_{n=1}^N h(S_n-S_n')$.

\subsection*{Acknowledgements}
The authors are deeply grateful to Nicolas Bouchot, Clément Cosco, Anna Donadini, Gaspard Gomez and Maël Laoufi for helpful discussions, comments and advice, and for the exchanges that motivated this work. 

Q.B.\ acknowledges the support of Institut Universitaire de France and ANR Local (ANR-22-CE40-0012-02). 
F.C.\ acknowledges the support from the European Union’s Horizon 2020 research and innovation programme under the Marie Skłodowska-Curie grant agreement No.\ 101034255 and from INdAM/GNAMPA.

\end{appendix}

\bibliographystyle{alpha}
\bibliography{biblio.bib}

\begin{thebibliography}{CRW26}

\bibitem[AKQ14]{AKQ14a}
Tom Alberts, Konstantin Khanin, and Jeremy Quastel.
\newblock {The intermediate disorder regime for directed polymers in dimension $1+1$}.
\newblock {\em Ann. Probab.}, 42(3):1212--1256, 2014.

\bibitem[AY15]{AY15}
Kenneth Alexander and G{\"o}khan Yıldırım.
\newblock Directed polymers in a random environment with a defect line.
\newblock {\em Electron. J. Probab.}, 20:20 pp., 2015.

\bibitem[BCC26]{BCC26}
Nicolas Bouchot, Cl{\'e}ment Cosco, and Francesca Cottini.
\newblock Fluctuations of log-partition function with spatially correlated disorder.
\newblock {\em in preparation}, 2026+.

\bibitem[BCT25]{BCT25}
Quentin Berger, Francesco Caravenna, and Nicola Turchi.
\newblock Strong disorder for {S}tochastic {H}eat {F}low and {2D} directed polymers.
\newblock {\em preprint arXiv:2508.02478}, 2025.

\bibitem[BL17]{BL17}
Quentin Berger and Hubert Lacoin.
\newblock The high-temperature behavior for the directed polymer in dimension $1+2$.
\newblock {\em Annales de l'Institut Henri Poincar\'e, Probabilit\'es et Statistiques}, 53(1):430--450, 2017.

\bibitem[BN26]{BN26}
Quentin Berger and Shuta Nakajima.
\newblock Sharp behavior of the free energy for the two-dimensional directed polymer model.
\newblock {\em preprint arXiv:2605.30707}, 2026.

\bibitem[Bol89]{Bol89}
Erwin Bolthausen.
\newblock A note on the diffusion of directed polymers in a random environment.
\newblock {\em Communications in Mathematical Physics}, 123(4):529--534, 1989.

\bibitem[BTZ26]{BCZ26}
Quentin Berger, Nicola Turchi, and Nikos Zygouras.
\newblock Fractional moments of the {S}tochastic {H}eat {F}low and {2D} directed polymers.
\newblock {\em arXiv:2608.13359}, 2026.

\bibitem[CC22]{CC22}
Francesco Caravenna and Francesca Cottini.
\newblock Gaussian limits for subcritical chaos.
\newblock {\em Electronic Journal of Probability}, 27:1--35, 2022.

\bibitem[CCD25]{CCD25}
Cl{\'e}ment Cosco, Francesca Cottini, and Anna Donadini.
\newblock A central limit theorem for two-dimensional directed polymers with critical spatial correlation.
\newblock {\em arXiv:2509.16694}, 2025.

\bibitem[CD25]{CD25}
Cl{\'e}ment Cosco and Anna Donadini.
\newblock On the central limit theorem for the log-partition function of 2d directed polymers.
\newblock {\em Electronic Communications in Probability}, 30:1--11, 2025.

\bibitem[CG]{CG26}
Cl{\'e}ment Cosco and Gaspard Gomez.
\newblock work in progress, personal communication.

\bibitem[CG23]{CG23}
Yingxia Chen and Fuqing Gao.
\newblock Scaling limits of directed polymers in spatial-correlated environment.
\newblock {\em Electronic Journal of Probability}, 28:1--57, 2023.

\bibitem[CH02]{CH02}
Philippe Carmona and Yueyun Hu.
\newblock On the partition function of a directed polymer in a {G}aussian random environment.
\newblock {\em Probab. Theory Related Fields}, 124(3):431--457, 2002.

\bibitem[CH06]{CH06}
Philippe Carmona and Yueyuen Hu.
\newblock Strong disorder implies strong localization for directed polymers in a random environment.
\newblock {\em ALEA, Lat. Am. J. Probab. Math. Stat.}, 2:217--229, 2006.

\bibitem[Com17]{Com17}
Francis Comets.
\newblock {\em Directed Polymers in Random Environments}, volume 2175 of {\em Ecole d'Et{\'e} de probabilit{\'e}s de {S}aint-{F}lour}.
\newblock Springer International Publishing, 2017.

\bibitem[CR26]{CR26}
Junjie Cao and Guanglin Rang.
\newblock Scaling limit of $1+1$ dimensional directed polymer with power-law tail and spatial correlated noise.
\newblock {\em preprint arXiv:2607.09246}, 2026.

\bibitem[CRW26]{CRW26}
Junjie Cao, Guanglin Rang, and Jianglun Wu.
\newblock Disorder thresholds and free energy of brownian directed polymers with product and radial spatial correlations.
\newblock {\em arXiv:2608.29501}, 2026.

\bibitem[CSY03]{CSY03}
Francis Comets, Tokuzo Shiga, and Nobuo Yoshida.
\newblock Directed polymers in a random environment: strong disorder and path localization.
\newblock {\em Bernoulli}, 9(4):705--723, 2003.

\bibitem[CSZ17a]{CSZ17a}
Francesco Caravenna, Rongfeng Sun, and Nikos Zygouras.
\newblock Polynomial chaos and scaling limits of disordered systems.
\newblock {\em Journal of the European Mathematical Society}, 19:1--65, 2017.

\bibitem[CSZ17b]{CSZ17b}
Francesco Caravenna, Rongfeng Sun, and Nikos Zygouras.
\newblock Universality in marginally relevant disordered systems.
\newblock {\em The Annals of Applied Probability}, 27(5):3050--3112, 2017.

\bibitem[CSZ23]{CSZ23}
Francesco Caravenna, Rongfeng Sun, and Nikos Zygouras.
\newblock The critical 2d {S}tochastic {H}eat {F}low.
\newblock {\em Invent. Math.}, 233(1):325--460, 2023.

\bibitem[CSZ25]{CSZ25}
Francesco Caravenna, Rongfeng Sun, and Nikos Zygouras.
\newblock From disordered systems to the {C}ritical {2D} {S}tochastic {H}eat {F}low.
\newblock {\em preprint arXiv:2511.08479}, 2025.

\bibitem[CSZ26]{CSZ24-rev}
F~Caravenna, R~Sun, and N~Zygouras.
\newblock The critical 2d {S}tochastic {H}eat {F}low and related models.
\newblock {\em {CIME} Lecture Notes}, 2026+.

\bibitem[CTT17]{CTT17}
Francesco Caravenna, Fabio~Lucio Toninelli, and Niccol{\`o} Torri.
\newblock {Universality for the pinning model in the weak coupling regime}.
\newblock {\em Ann. Probab.}, 45(4):2154--2209, 2017.

\bibitem[CV06]{CV06}
Francis Comets and Vincent Vargas.
\newblock Majorizing multiplicative cascades for directed polymers in random media.
\newblock {\em ALEA, Lat. Am. J. Probab. Math. Stat.}, 2:267--277, 2006.

\bibitem[DHL25]{DHL25}
Alexander Dunlap, Martin Hairer, and Xue-Mei Li.
\newblock A critical stochastic heat equation with long-range noise.
\newblock {\em arXiv:2509.23790}, 2025.

\bibitem[HH85]{HH85}
D.~A. Huse and C.~L. Henley.
\newblock Pinning and roughening of domain walls in ising systems due to random impurities.
\newblock {\em Physics Review Letters}, 54:2708--2711, 1985.

\bibitem[Jan97]{Jan97}
Svante Janson.
\newblock {\em Gaussian Hilbert Spaces}.
\newblock Cambridge Tracts in Mathematics. Cambridge University Press, 1997.

\bibitem[JL26]{JL26}
Stefan Junk and Hubert Lacoin.
\newblock On the free energy of directed polymers in dimension \(d\geq 3\).
\newblock {\em in preparation}, 2026.

\bibitem[KT24]{KT24}
Sefika Kuzgun and Ran Tao.
\newblock Time-dependent averages of a critical long-range stochastic heat equation.
\newblock {\em arXiv:2411.09058}, 2024.

\bibitem[Lac10]{Lac10a}
H.~Lacoin.
\newblock New bounds for the free energy of directed polymer in dimension $1+1$ and $1+2$.
\newblock {\em Commun. Math. Phys.}, 294:471--503, 2010.

\bibitem[Lac11]{Lac11}
Hubert Lacoin.
\newblock Influence of spatial correlation for directed polymers.
\newblock {\em Ann. Probab.}, 39(1):139--175, 2011.

\bibitem[Lac25]{Lac25}
Hubert Lacoin.
\newblock The localization transition for the directed polymer in a random environment is smooth.
\newblock {\em arXiv:2505.13382}, 2025.

\bibitem[Law96]{Law96}
G.~F. Lawler.
\newblock {\em Intersection of random walks}.
\newblock Probability and its Applications. Springer Birkh{\"a}user, Boston, 1996.

\bibitem[Led01]{Led05}
M.~Ledoux.
\newblock {\em The concentration of measure phenomenon}, volume~89.
\newblock American Mathematical Society, 2001.

\bibitem[MT04]{MT04}
Carl Mueller and Roger Tribe.
\newblock A singular parabolic anderson model.
\newblock {\em Electron. J. Probab.}, 9:98--144, 2004.

\bibitem[Nak14]{Nak14}
Makoto Nakashima.
\newblock A remark on the bound for the free energy of directed polymers in random environment in 1+ 2 dimension.
\newblock {\em Journal of Mathematical Physics}, 55(9), 2014.

\bibitem[Nak19]{Nak19}
Makoto Nakashima.
\newblock Free energy of directed polymers in random environment in $1+ 1$-dimension at high temperature.
\newblock {\em Electron. J. Probab.}, 24:1--43, 2019.

\bibitem[Ran20]{Rang20}
Guanglin Rang.
\newblock From directed polymers in spatial-correlated environment to stochastic heat equations driven by fractional noise in 1+ 1 dimensions.
\newblock {\em Stoch. Process. Appl.}, 130(6):3408--3444, 2020.

\bibitem[Tsa24]{Tsai24}
Li-Cheng Tsai.
\newblock {S}tochastic {H}eat {F}low by moments.
\newblock {\em preprint arXiv:2410.14657}, 2024.

\bibitem[Zyg24]{Zyg24}
Nikos Zygouras.
\newblock Directed polymers in a random environment: a review of the phase transitions.
\newblock {\em Stochastic Processes and their Applications}, 177:104431, 2024.

\end{thebibliography}

\end{document}